\numberwithin{equation}{section}
\numberwithin{equation}{section}
\definecolor{darkred}{rgb}{0.5,0,0}
\definecolor{darkgreen}{rgb}{0,0.5,0}
\definecolor{darkblue}{rgb}{0,0,0.5}
\numberwithin{equation}{section}
\newtheorem{thm}{Theorem}[section]
\newtheorem{cor}[thm]{Corollary}
\newtheorem{conj}[thm]{Conjecture}
\newtheorem{prop}[thm]{Proposition}
\newtheorem{lemma}[thm]{Lemma}
\newtheorem{def-lemma}[thm]{Definition-Lemma}
\theoremstyle{definition}
\newtheorem{defn}[thm]{Definition}
\theoremstyle{remark}
\theoremstyle{remark}
\newtheorem{rem}[thm]{Remark}
\newcommand{\beq}{\begin{equation}}
\newcommand{\eeq}{\end{equation}}
\newcommand{\beqn}{\begin{equation*}}
\newcommand{\eeqn}{\end{equation*}}
\newcommand{\ov}{\overline}
\newcommand{\calH}{\mathcal{H}}
\newcommand{\make@circled}[2]{%
  \ooalign{$\m@th#1\smallbigcirc{#1}$\cr\hidewidth$\m@th#1#2$\hidewidth\cr}%
}
\newcommand{\smallbigcirc}[1]{%
  \vcenter{\hbox{\scalebox{0.77778}{$\m@th#1\bigcirc$}}}%
}
\newcommand{\colim@}[2]{%
  \vtop{\m@th\ialign{##\cr
    \hfil$#1\operator@font colim$\hfil\cr
    \noalign{\nointerlineskip\kern1.5\ex@}#2\cr
    \noalign{\nointerlineskip\kern-\ex@}\cr}}%
}
\newcommand{\colim}{%
  \mathop{\mathpalette\colim@{\rightarrowfill@\textstyle}}\nmlimits@
}
\title[]{Arithmetic geometry of quantum connections on Calabi--Yau $3$-folds}
\author{Shaoyun Bai}
\address{Department of Mathematics, MIT, Boston, MA, 02139, USA}
\email{shaoyunb@mit.edu}
\author{Jae Hee Lee}
\address{Department of Mathematics, Stanford University, Stanford, CA, 94305, USA}
\email{jaeheelee@stanford.edu}
\author{Daniel Pomerleano}
\address{University of Massachusetts, Boston, 100 William T, Morrissey Blvd, Boston, MA 02125, USA}
\email{Daniel.Pomerleano@umb.edu}
\thanks{The first-named author is supported by NSF DMS-2404843. The third-named author is supported by NSF DMS-2306204.}
\begin{document}

\maketitle

\begin{abstract}
Fix a prime $p > 3$. Working over $\mathbb{Z}_p$, we show that the quantum connection of any closed Calabi--Yau threefold gives rise to a Fontaine--Laffaile module when restricted to the even degree and torsion-free part of $p$-adic quantum cohomology, whose associated Frobenius endomorphism has leading order term prescribed by the $p$-adic Gamma class. After reducing mod $p$, the divided Frobenius endomorphism defines an analogue of the inverse Cartier operator on mod $p$ quantum cohomology. We establish an $A$-model analogue of a classical result due to Katz: the conjugation of the $p$-curvature of the mod $p$ quantum connection by the inverse Cartier operator is equal to the Frobenius pullback of the quantum product, the $A$-model counterpart of the Kodaira--Spencer class. Moreover, we identify the quantum Steenrod operation with the $p$-curvature of the mod $p$ quantum connection in this setting for any prime $p$.

We propose several conjectures concerning how these arithmetic structures may extend to quantum connections on more general semi-positive symplectic manifolds.
\end{abstract}

\setcounter{tocdepth}{1}
\tableofcontents
\section{Introduction}
\subsection{Fontaine--Laffaille modules}
Let $p$ be an odd prime number. Fontaine--Laffaille modules were introduced in \cite{Fontaine_Laffaille} to capture the linear algebraic structures that exist on the de Rham cohomology of a smooth variety over $\mathbb{Z}_p$.  A (torsion-free) Fontaine--Laffaille module over $\mathbb{Z}_p$ is a triple 
\begin{align} \label{eq:strongdivisibility}
(\calH, \mathrm{Fil}^i, \Phi) \end{align}
consisting of a finite and free $\mathbb{Z}_p$-module $\calH$, a decreasing filtration $\mathrm{Fil}^i$ such that $\calH/\mathrm{Fil}^i$ is torsion-free for all $i$, and a map $\Phi$ called the Frobenius. The map $\Phi$ must satisfy the ``strong $p$-divisibility" property with respect to the filtration:  
    \[
    \Phi(\mathrm{Fil}^i) \subset p^i \calH, \quad \sum_{i} \frac{1}{p^i} \Phi(\mathrm{Fil}^i) = \calH.
    \] Fontaine and Laffaille define a fully faithful functor $T_{cris}$ from the category of Fontaine-Laffaille modules to the category of integral $p$-adic representations of the Galois group $\mathrm{Gal}(\overline{\mathbb{Q}}_p/\mathbb{Q}_p).$ 

In \cite[II.2.7]{FontaineMessing}, Fontaine and Messing establish the existence of a Fontaine-Laffaille module structure on the de Rham cohomology of a variety $\mathcal{Y}$ over $\mathbb{Z}_p$ of dimension $\operatorname{dim}(\mathcal{Y})< p$ whose de Rham cohomology is torsion free. In this setting, the filtration $\mathrm{Fil}^\bullet$ corresponds to the Hodge filtration and the Frobenius map $\Phi$ corresponds to the crystalline Frobenius under the comparison isomorphism between de Rham and crystalline cohomologies. There is a canonical isomorphism (\cite{deligneillusie,Kato}) in the derived category of Zariski sheaves on $\mathcal{Y}$:
\begin{align} \label{eq:Katzisomorphism}
(\hat{\Omega}^\bullet_{\mathcal{Y}}, p \cdot d_{\mathrm{dR}}) \xrightarrow{\sim} (\hat{\Omega}^\bullet_{\mathcal{Y}}, d_{\mathrm{dR}})
\end{align}
whose composition with the natural inclusion of complexes $(\hat{\Omega}^\bullet_{\mathcal{Y}}, d_{\mathrm{dR}}) \hookrightarrow (\hat{\Omega}^\bullet_{\mathcal{Y}}, p \cdot d_{\mathrm{dR}})$, defined in degree $i$ by the map $p^i \cdot \mathrm{Id}: \hat{\Omega}^i_{\mathcal{Y}} \to \hat{\Omega}^i_{\mathcal{Y}}$, recovers the crystalline Frobenius morphism. The existence of the isomorphism \eqref{eq:Katzisomorphism} implies the strong divisibility property of the crystalline Frobenius. When $\operatorname{dim}(\mathcal{Y})< p-1$, Fontaine and Messing (\cite[III.6.4]{FontaineMessing}) go on to show that the functor $T_{cris}$ identifies this module with the Galois representation on the \'etale cohomology groups $H_{et}^*(\mathcal{Y} \times_{\mathbb{Z}_p}\bar{\mathbb{Q}}_p,\mathbb{Z}_p)$.

In \cite{Faltings1989}, Faltings establishes direct analogues of the Fontaine--Messing results for families as well as generalizations to the logarithmic setting. Let $R$ be a ring over $\mathbb{Z}_p$ with a fixed absolute Frobenius lifting \( F \).\footnote{Typically, $R$ is the $p$-adic completion of the algebra of functions on a smooth affine scheme over $\mathbb{Z}_p$.} A relative Fontaine-Laffaille module over $R$ is now a quadruple $(\calH, \nabla, \mathrm{Fil}, \Phi)$ where: \begin{itemize} \item \( (\calH, \mathrm{Fil}) \) is a filtered free $R$-module equipped with a connection \( \nabla \) satisfying a ``Griffiths transversality" property with respect the filtration (see \S \ref{subsection:FL definitions} for precise definitions). \vskip 5 pt 
     \item The (relative) Frobenius is an $R$-linear morphism 
    \[
    \Phi : F^{*} \calH \longrightarrow \calH \] which satisfies the strong divisibility property \eqref{eq:strongdivisibility} and is horizontal with respect to the
    connections \( F^{*}\nabla \) on \( F^{*} \calH \) and \( \nabla \) on \( \calH \). \end{itemize} 
 
\subsubsection{The A-model Fontaine-Laffaille module}
 In the discussion below, $(X,\omega)$ will denote a symplectic Calabi--Yau $3$-fold. By this we mean a $6$-dimensional, simply connected, closed symplectic manifold $(X,\omega)$ with integral symplectic class $[\omega] \in H^2(X,\mathbb{Z})$ and such that $c_1(TX)=0$. Our first goal is to build a logarithmic Fontaine-Laffaille structure on the quantum cohomology of a Calabi-Yau 3-fold. Let us begin by describing the Novikov ring. Fix a ground commutative ring $\mathbb{K}$ and let $\bar{H}_2(X;\mathbb{Z})$ denote the quotient of $H_2(X;\mathbb{Z})$ by its torsion subgroup, $$\bar{H}_2(X;\mathbb{Z}):=\frac{H_2(X;\mathbb{Z})}{H_2(X;\mathbb{Z})_{tor}}.$$ The Novikov ring $\Lambda_{\mathbb{K}}$ will denote the ring whose elements are series:
\begin{equation}
\sum_{\beta \in \bar{H}_2(X; \mathbb{Z}), \int_\beta \omega \geq 0} c_\beta q^\beta, \quad c_\beta \in \mathbb{K}
\end{equation}
such that for any fixed $E \geq 0$, there are only finitely many $c_\beta \neq 0$ for all $\beta \in \bar{H}_2(X; \mathbb{Z})$ satisfying $\int_\beta \omega \leq E$. We next introduce a formal variable $u$ of degree 2 and consider the even part of quantum cohomology which is defined to be: \begin{align} QH^*(X;\Lambda_{\mathbb{K}})[u^{\pm}]:=H^{even}(X; \Lambda_\mathbb{K})[u,u^{-1}]. \end{align} 
Quantum multiplication by a class $b \in H^2(X; \mathbb{K})$ induces an endomorphism:  \begin{equation}
\mathbb{M}_b : H^{even}(X; \Lambda_\mathbb{K}) \to H^{even}(X; \Lambda_\mathbb{K}), \quad (\mathbb{M}_b(a_1) \cdot a_2) = \sum_{\beta} q^\beta \langle a_1, a_2, b \rangle_{0,3,\beta},
\end{equation}
where $\langle - \rangle_{0,3,\beta}$ denotes the 3-pointed Gromov–Witten correlator (with extension of scalars to $\mathbb{K}$). The small quantum connection is defined by:  
\begin{equation}
\nabla_b := \partial_b + u^{-1} \mathbb{M}_b : QH^*(X;\Lambda_{\mathbb{K}})[u^{\pm}] \to QH^*(X;\Lambda_{\mathbb{K}})[u^{\pm}],
\end{equation}
where $\partial_b$ denotes the derivation of  $\Lambda_{\mathbb{K}}$ associated to $b$. This defines a logarithmic connection with respect to the natural log structure on $\operatorname{Spf}(\Lambda_\mathbb{K})$ (see \S \ref{subsection:FL definitions}). The filtration on $QH^*(X;\Lambda_{\mathbb{K}})[u^{\pm}]$ is defined by: \begin{align}\operatorname{Fil}^i := \bigoplus_{3+k \geq i} u^kH^{even}(X; \Lambda_\mathbb{K}) \subset H^{even}(X; \Lambda_\mathbb{K})[u,u^{-1}]. \end{align}

At this stage we specialize $\mathbb{K}=\mathbb{Z}_p.$ We consider the ``standard" lift of Frobenius \begin{align} F: \Lambda_{\mathbb{Z}_p} \to \Lambda_{\mathbb{Z}_p}, \quad q^{\beta} \to q^{p\beta}. \end{align}

The A-model Frobenius intertwiner is determined by a characteristic class \begin{align} \Gamma_p(TX) \in H^*(X; \mathbb{Q}_p)[u,u^{-1}]\end{align} built out of Morita's $p$-adic Gamma function $\Gamma_p(z)$ (see \cite[\S 4]{bai2025p} or \S \ref{sect:FLonQH} below). In the case of Calabi-Yau 3-folds, the class $\Gamma_p(TX)$ simplifies to:  
\begin{align} \label{eq:Gammap}
\Gamma_p(TX) 
= 1 - u^{-3}\zeta_p(3) \cdot c_3(TX),\quad \zeta_p(3)= -\frac{1}{2}(\Gamma_p'''(0)-\Gamma_p'(0)^3).
\end{align}

We use $\Gamma_p(TX)$ to introduce a grading preserving $\mathbb{Z}_p$-linear map on cohomologies: \begin{align} \label{eq:b-frobenius} 
\Phi_0: H^*(X;\mathbb{Z}_p)[u,u^{-1}] \to H^*(X;\mathbb{Q}_p)[u,u^{-1}], \quad \Phi_0(u^kx) = p^{3+k} u^k\cdot(\Gamma_p(TM) \smile x). \end{align} 

Our first result is the following:

\begin{thm}\label{thm:integrality} Let $p$ be a prime for which $H^{even}(X;\mathbb{Z}_p)$ contains no torsion. Then there is a unique Frobenius map: \begin{align}\label{eq:Amodelcrystalline} \Phi: F^*QH^*(X; \Lambda_{\mathbb{Z}_p})[u^{\pm}] \to QH^*(X; \Lambda_{\mathbb{Z}_p})[u^{\pm},1/p] \end{align} such that: \begin{enumerate} \item Let $\Lambda_{+} \subset \Lambda_{\mathbb{Z}_{p}}$ denote the ideal of elements with strictly positive $\omega$-valuation. The ``leading order term" (with respect to the $\omega$-valuation) of $\Phi$ is given by \eqref{eq:b-frobenius}: \begin{align} \Phi = \Phi_0  \quad \operatorname{mod}\ \Lambda_{+}.\end{align}  \item Let $\mathcal{H}^{2k}$ denote the subspace of $QH^*(X; \Lambda_{\mathbb{Z}_p})[u^{\pm}]$ of degree $2k$. The map $\Phi$ satisfies:   \begin{align} \label{eq:keyintegralitycondition} \Phi(F^*\mathcal{H}^{2k}) \subset p^k\mathcal{H}^{2k},\quad k \in \mathbb{Z}.\end{align} \end{enumerate} \end{thm}

 We note that the elementary theory of $p$-adic differential equations (see \cite[\S 3]{bai2025p} or Lemma \ref{lem:initialtermdeterminesFrob} below) promises that there is at most one Frobenius intertwiner of the form \eqref{eq:Amodelcrystalline} whose leading order term is \eqref{eq:b-frobenius}. The main content of Theorem \ref{thm:integrality} is thus to construct such an intertwiner. In fact, ``most" connections do not admit compatible Frobenius maps (\cite[Remark 3.2]{Katz73}, \cite[Remark 2.3.11]{Kedlaya}). The construction of the intertwiner in this case relies on subtle integrality features of Gromov-Witten invariants on Calab-Yau 3-folds (see \S \ref{sect:Gopakumar}). 

 \begin{cor} \label{cor:FLclassical} Suppose $p>3$ and let $\mathcal{H}:=\mathcal{H}^0$ denote the degree zero piece of  $QH^*(X;\Lambda_{\mathbb{Z}_p})[u^{\pm}].$ The data $$(\mathcal{H},\nabla,\operatorname{Fil}^i,\Phi)$$ defines a logarithmic Fontaine--Laffaille module in the sense of Definition \ref{defn:fontaine}.\end{cor}

 To understand Corollary \ref{cor:FLclassical} it is helpful to notice that, because $u$ has degree 2, there are canonical isomorphisms: \begin{align} \mathcal{H}^0\otimes_{\mathbb{Z}_p} \mathbb{Z}_p[u,u^{-1}] \cong QH^*(X;\Lambda_{{\mathbb{Z}_p}})[u^{\pm}], \quad \mathcal{H}^0 \cong \bigoplus_{k=0}^3 u^{-k}H^{2k}(X;\Lambda_{{\mathbb{Z}_p}}).  \end{align} 

The paper \cite{smirnov24} uses a leading order term similar to \eqref{eq:b-frobenius} to construct Frobenius intertwiners for the quantum differential equation of $T^*\mathbb{P}^n$ and the paper \cite{bai2025p} also uses a very similar leading term to to construct overconvergent Frobenius structures on the quantum connection of certain monotone symplectic manifolds. The leading order term \eqref{eq:b-frobenius} is also motivated by an algebro-geometric conjecture due to Candelas--de la Ossa--van Straten \cite[\S 4.4]{candelas}.  Namely, suppose we are given a semi-stable family of Calabi-Yau 3-folds $$ \mathcal{Y}_\mathbb{Z} \to \operatorname{Spec}(\mathbb{Z}[1/N][\![t]\!]),$$
whose base change to $\mathbb{C}[\![t]\!]$ gives rise to a maximally unipotent family. We choose a prime $p \nmid N$ and consider the  family $\mathcal{Y} \to \mathbb{Z}_p[\![t]\!]$ given by base change to $\mathbb{Z}_p[\![t]\!]$. The logarithmic Fontaine-Laffaille theory of \cite{Faltings1989} gives rise to a limiting Fontaine--Laffaille module at $t=0$. Candelas--de la Ossa--van Straten have conjectured  that the limiting Frobenius endomorphism $\Phi_0$ on the logarithmic de Rham cohomology $H^3_{log}(\mathcal{Y}/\mathbb{Z}_p[\![t]\!])$ has a very constrained structure. For example, when $\operatorname{rank}_{\mathbb{Z}_p[\![t]\!]}(H^3_{log}(\mathcal{Y}/\mathbb{Z}_p[\![t]\!]))=4$ (the smallest possible rank), they conjecture that in a suitable basis, the limiting Frobenius matrix $\Phi_0$ is of the form:

\[
\Phi_0 = \begin{pmatrix} 
p^3 & 0 & 0 & 0 \\ 
0 & p^2 & 0 & 0 \\ 
0 & 0 & p & 0 \\ 
c \cdot \zeta_p(3)  & 0 & 0 & 1
\end{pmatrix}
\]

where $c \in \mathbb{Q}$ is a rational number and $\zeta_p(3)$ is the constant appearing in \eqref{eq:Gammap}. This conjecture has been verified for the mirror quintic 3-fold by Shapiro in \cite{Schapiro1, Schapiro2}. Theorem \ref{thm:integrality} provides, through the lens of mirror symmetry, more evidence for this algebro-geometric conjecture. In the opposite direction, the Candelas--de la Ossa--van Straten conjecture suggests a way to characterize the leading order term $\Phi_0$ term through the Fukaya category and noncommutative geometry; see Conjecture \ref{conj:open-closed} below.

\subsection{The reduction modulo $p$}

\subsubsection{Inverse Cartier, p-curvature, and Katz' formula}
Our remaining results concern the quantum connection in characteristic $p$, obtained as the mod $p$ reduction of the $p$-adic quantum connection. To motivate the next result we return to the setting a smooth and proper variety $\mathcal{Y}$ over $\mathbb{Z}_p$. The reduction mod $p$ of the divided Frobenius operator \eqref{eq:Katzisomorphism} gives rise to the the inverse Cartier isomorphism from \cite{deligneillusie}:  \begin{align} \label{eq:Cartierisomorphism}
\mathcal{C}^{-1}:\bigoplus_i\Omega^i_{\mathcal{Y}_{\mathbb{F}_p}} \xrightarrow{\sim} (\Omega^\bullet_{\mathcal{Y}_{\mathbb{F}_p}}, d_{\mathrm{dR}}).
\end{align} 

In fact, the mod $p$ reduction of any Fontaine--Laffaille module gives rise to divided Frobenius operators --- in this way we obtain what is known as a \emph{$p$-torsion Fontaine--Laffaille module} (see Definition \ref{defn:ptorsion_fontaine}). Assuming $p>3$, the divided Frobenius operators on the mod $p$ reduction of the Fontaine-Laffaille module from Theorem \ref{thm:integrality} assemble to give an A-model version of the Cartier operator:
\begin{align} \label{eq:AmodelCartier} \mathcal{C}^{-1}: F^*QH^*(X;\Lambda_{\mathbb{F}_p})[u,u^{-1}] \to QH^*(X;\Lambda_{\mathbb{F}_p})[u,u^{-1}]. \end{align}

\begin{rem} For a general $p$-torsion Fontaine-Laffaille module, the domain of the inverse Cartier map \eqref{eq:inversecartiergeneral} is an associated graded with respect to the filtration $\bar{\operatorname{Fil}}$. In quantum cohomology, the filtration is canonically split, and so we may identify the associated graded with $QH^*$ in \eqref{eq:AmodelCartier}.  \end{rem}

Over a field of characteristic $p$, the following fundamental operator known as the \emph{$p$-curvature} of $\nabla$ along $b$, 
\begin{equation}
        \psi_b := \nabla_b^p - \nabla_b : QH^*(X;\Lambda_{\mathbb{F}_p})[u]\langle \theta \rangle \to QH^*(X;\Lambda_{\mathbb{F}_p})[u]\langle \theta \rangle,
\end{equation}
is $\Lambda$-linear (cf. \cite[Section 5]{Kat70}) (perhaps surprisingly, since $\psi_b$ involves iterative compositions of the connection $\nabla_b$). In the algebro-geometric setting, a celebrated formula of Katz \cite[Theorem 3.2]{Kat72} relates the $p$-curvature of the Gauss-Manin connection on de Rham cohomology to cup product by the Kodaira-Spencer class of the family suitably conjugated by the inverse-Cartier operator. 

The following is a direct analogue of Katz' formula in quantum cohomology: 

\begin{thm} \label{thm:Cartierconjugation}  The following diagram commutes:
\begin{equation}\label{diagram:katz}
        \begin{tikzcd}
             F^*QH^*(X;\Lambda_{\mathbb{F}_p})[u^{\pm}]  \arrow[rr, "-u^{-1}F^*\mathbb{M}_b"] \dar["\mathcal{C}^{-1}"] &&  F^*QH^*(X;\Lambda_{\mathbb{F}_p})[u^{\pm}]  \dar["\mathcal{C}^{-1}"] \\
             QH^*(X;\Lambda_{\mathbb{F}_p})[u^{\pm}] \arrow[rr, "\psi_b"] && QH^*(X;\Lambda_{\mathbb{F}_p})[u^{\pm}]
        \end{tikzcd}.
    \end{equation}
 \end{thm}


\subsubsection{Quantum Steenrod and p-curvature} 
The (inverse) Cartier operator obtained from the Fontaine--Laffaille structure provides a way to ``twist'' the quantum product to get nontrivial linear endomorphisms of the quantum $D$-module, namely the $p$-curvature.  Initially proposed by Fukaya \cite{Fuk97} and studied in detail by Seidel--Wilkins \cite{Wil20, seidel-wilkins}, the quantum $D$-module $QH^*(X;\Lambda_{\mathbb{F}_p})[u]\langle \theta \rangle$ carries another set of distinguished endomorphisms, the \emph{quantum Steenrod operations}
\begin{equation}
    Q\Sigma_b: QH^*(X;\Lambda_{\mathbb{F}_p})[u]\langle \theta \rangle \to QH^*(X;\Lambda_{\mathbb{F}_p})[u]\langle \theta \rangle
\end{equation}
which are defined for any $b \in H^*(X; \Lambda)$ using $\mu_p$-equivariant Gromov--Witten type invariants. The operator $Q\Sigma_b$ is a quantum deformation of the total Steenrod operation, in the sense that $Q\Sigma_b |_{q=0} = St_p(b) \smile$ where $St_p(b)$ is the $p$-th total Steenrod power applied to $b$. A detailed review of the construction of $Q\Sigma_b$ is provided in Section \ref{subsec:qst}.

The final theorem of this paper identifies the $p$-curvature of quantum connection and the quantum Steenrod operation for Calabi--Yau threefolds, verifying a conjecture proposed by the second-named author in this setting.

\begin{thm}\label{thm:main}
    Let $(X,\omega)$ be a symplectic Calabi--Yau manifold with $\dim_{\mathbb{R}} X = 6$, and $b \in \mathrm{im}(H^2(X;\mathbb{Z}) \to H^2(X;\mathbb{F}_p))$ a degree $2$ cohomology class. Then
    \begin{equation}
        Q\Sigma_b = u^p \psi_b,
    \end{equation}
    that is the quantum Steenrod operation for $b$ and the $p$-curvature endomorphism for $b$ are equal.
\end{thm}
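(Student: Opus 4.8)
\emph{Overall strategy.} Both $Q\Sigma_b$ and $\psi_b$ are $\mathbb{F}_p[\![t,\theta]\!]$-linear endomorphisms of $QH^*_{\mu_p}(X;\mathbb{F}_p)$ of enumerative origin, and the plan is to identify each of them with the same explicit ``universal expression'' built from the quantum connection. On the $\psi_b$-side this expression is, essentially by definition, $\nabla_b^{\circ p}-t^{p-1}\nabla_b$ reorganized via Jacobson's formula and the divisor equation; on the $Q\Sigma_b$-side it will emerge from a degeneration analysis of the $\mu_p$-equivariant genus-$0$ moduli spaces that define the quantum Steenrod operation. The Calabi--Yau hypothesis together with $\dim_{\mathbb{R}}X=6$ and $\deg b=2$ is exactly what makes both computations finite and forces the two expressions to literally coincide. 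Since each operator is determined by its values on $H^*(X;\mathbb{F}_p)[\![t,\theta]\!]$, it suffices to compare there.

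\emph{Constant term and the $p$-curvature expansion.} Setting $q=0$ annihilates all quantum corrections, so $\nabla_b|_{q=0}$ is ordinary multiplication by $b$ and hence $\psi_b|_{q=0}=(b\,\cup)^{\circ p}-t^{p-1}(b\,\cup)=b^p\,\cup-t^{p-1}\,b\,\cup$. As $b$ is the reduction of an integral class of degree $2$, the Bockstein and all intermediate reduced powers of $b$ vanish, so $St_p(b)=b^p-t^{p-1}b$, whence $\psi_b|_{q=0}=St_p(b)\,\cup$, which is precisely the normalization $Q\Sigma_b|_{q=0}$ recalled in Section~\ref{subsec:qst}. For the quantum corrections one expands $\nabla_b^{\circ p}=(t\partial_b+\mathbb{M}_b)^{\circ p}$ by Jacobson's formula; using that $t\partial_b$ is $p$-closed on the Novikov ring (Fermat gives $(t\partial_b)^{\circ p}=t^p\partial_b$), this becomes $\psi_b=\mathbb{M}_b^{\circ p}-t^{p-1}\mathbb{M}_b+(\text{iterated commutators of }t\partial_b\text{ and }\mathbb{M}_b)$. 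Two features of CY$3$ geometry tame this sum: first, $c_1=0$ forces every quantum correction to carry strictly positive energy, so the whole expression is controlled by the energy filtration and recovered order by order from lower-energy data; second, $c_1=0$ together with $\dim_{\mathbb{R}}X=6$ forces $\mathbb{M}_b$ (classical plus quantum) to raise cohomological degree by exactly $2$, so that $\mathbb{M}_b^{\circ 4}=0$ and all sufficiently long commutators truncate. Applying the divisor equation to the surviving terms rewrites $\psi_b$ as an explicit sum over curve classes of two-pointed Gromov--Witten contributions weighted by powers of $t$.

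\emph{The enumerative side.} For $Q\Sigma_b$ one runs the analogue of the Seidel--Wilkins covariant-constancy/degeneration argument on the $\mu_p$-equivariant genus-$0$ moduli spaces underlying its construction: a central domain component carrying the order-$p$ rotation, the output marked point at a fixed point, the $p$ inputs (all equal to $b$) forming one free $\mu_p$-orbit, all Borel-completed over $E\mu_p$. Taking the codimension-one degeneration of this family inside the $\mu_p$-equivariant Deligne--Mumford space, the same CY$3$ dimension count shows that the only boundary strata of the correct virtual dimension are those in which the domain breaks into a $\mu_p$-invariant chain of components, each link being a two-pointed stable map carrying the divisor $b$, with the combinatorics of the rotation action distributing the equivariant parameter $t$ among the links and the two fixed points. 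Reading off such a chain as a composition of operations produces exactly one term of $\nabla_b^{\circ p}$, while the single stratum in which the central component survives produces the correction $-t^{p-1}\nabla_b$. Matching these contributions (again via the divisor equation) against the expansion of $\psi_b$ from the previous paragraph yields $Q\Sigma_b=\nabla_b^{\circ p}-t^{p-1}\nabla_b=\psi_b$.

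\emph{The main obstacle.} The technical heart is this equivariant degeneration analysis: one must (i) equip the relevant $\mu_p$-equivariant genus-$0$ moduli spaces with virtual fundamental classes compatible with the $\mathbb{Z}$-defined genus-$0$ Gromov--Witten theory that defines $\nabla_b$, (ii) prove a gluing/splitting formula identifying each boundary stratum's contribution with a composition of lower operations carrying the correct power of $t$ dictated by the equivariant normal geometry at the nodes, and (iii) check that, after imposing the CY$3$ dimension constraint, no exotic strata survive---in particular none in which several $b$-inputs collide onto one component or in which a bubble absorbs curvature that a non-vanishing $c_1$ would have supplied. The delicate point is controlling transversality and compactness for these equivariant moduli in the Calabi--Yau setting, where arbitrary perturbations are unavailable; the restriction to genus $0$ with at least three special points, under which the Gromov--Witten invariants are integral, is what keeps the equivariant refinements and the gluing analysis within reach.
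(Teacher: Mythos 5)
Your proposal diverges fundamentally from the paper's argument, and the divergence exposes a genuine gap rather than an alternative route.

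The paper does \emph{not} prove the theorem by a degeneration/splitting analysis of the $\mu_p$-equivariant moduli spaces. It localizes the computation in the target: using Ionel--Parker's cluster formalism, the moduli space of genus-$0$ curves of energy $\le E$ decomposes into open-and-closed pieces indexed by neighborhoods of embedded curves; both $Q\Sigma_b$ and $\psi_b$ (more precisely, their $E$-truncated structure constants) inherit this decomposition --- for $\psi_b$ this uses that distinct embedded curves in a generic CY$3$ are disjoint, so that compositions of the cluster components of $\mathbb{M}_b^{(j)}$ vanish across clusters; for elementary clusters the two operators agree by the explicit local $\mathbb{P}^1$ computation (Voisin/Aspinwall--Morrison on one side, the closed-form $p$-curvature of $tq\partial_q+B$ on the other); and a cluster isotopy theorem plus induction on energy reduces the general case to elementary clusters. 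Your first two paragraphs (constant term, nilpotence of $\mathbb{M}_b$ in real dimension $6$, Jacobson-type expansion of $\psi_b$) are fine and broadly consistent with what the paper does on the algebraic side.

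The gap is in your ``enumerative side.'' You assert that a codimension-one degeneration of the equivariant family identifies the boundary strata of the $Q\Sigma_b$-moduli with compositions of two-pointed stable maps, reproducing every term of $\nabla_b^{\circ p}-t^{p-1}\nabla_b$. No such splitting formula is available, and the obstruction is precisely the content of the theorem. The Seidel--Wilkins degeneration argument of this type yields only covariant constancy $[\nabla_b,Q\Sigma_b]=0$, which determines $Q\Sigma_b$ merely up to the centralizer of $\nabla_b$; the full identification for monotone targets by such TQFT-style arguments is only announced (Pomerleano--Seidel), with the published weaker statement being nilpotence of $Q\Sigma_b-\psi_b$. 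Concretely, the terms of $Q\Sigma_b$ in degrees $p\cdot d\,[C]$ come from $p$-fold multiple covers of embedded curves, supported on strata of the equivariant moduli space with $\mathbb{Z}/p$-isotropy where transversality and any naive gluing formula break down; a splitting into chains of two-pointed simple-curve contributions cannot produce these terms, yet they must match the leading term $t^{p-1}(B^{(p-1)}-B)$ of the $p$-curvature, whose $q^{pd}$-coefficients arise from the Fermat cancellation $d^{p-1}\equiv 1 \bmod p$ applied to the Aspinwall--Morrison series. Matching these is exactly what the local $\mathbb{P}^1$ computation accomplishes and what your degeneration claim silently assumes. A further symptom: your argument uses the CY$3$ hypothesis only for dimension counting and would apply verbatim to monotone targets, where the statement is known to be substantially harder.
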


\begin{rem}
    In contrast with the assumption in Theorem \ref{thm:integrality}, for Theorem \ref{thm:main}, we do not have to restrict to the mod $p$ reduction of the torsion-free part of integral cohomology nor assume $p>3$. On the other hand, the assumption $b \in \mathrm{im}(H^2(X;\mathbb{Z}) \to H^2(X;\mathbb{F}_p))$ is crucial, as the classical limit of the $p$-curvature endomorphism $\psi_b$ does not see the Bockstein, which is part of the total Steenrod operation. See the recent work \cite{chen2026getzler} for a related discussion in the context of categorical enumerative invariants.
\end{rem}

The two theorems above can be summarized by the following conceptual diagram:

\begin{figure}[ht]
  \centering
  \begin{tikzcd}
    &F^*\mathbb{M}_b \arrow[ld, "\mbox{power operations}"'] \arrow[rd, "\mbox{torsion Fontaine--Laffaille}"] &\\ Q\Sigma_b \arrow[rr, equal] & & \psi_b
  \end{tikzcd}

\end{figure}
Namely, there are two ways of obtaining operations on the mod $p$ quantum $D$-module from the (Frobenius-twisted) quantum product $\mathbb{M}_b$ on usual quantum cohomology. One is to consider the Cartier isomorphism induced from the Fontaine--Laffaille structure on the $p$-adic quantum connection; the other is to consider the ``quantum power operations'' defined from counts of $\mu_p$-equivariant curves which arise as $p$-fold covers of the $3$-pointed spheres relevant for the quantum product.

\subsection{Proof method: Arithmetics from Gopakumar--Vafa integrality}\label{sect:Gopakumar}
In \cite{KSV06}, Kontsevich--Schwarz--Vologodsky argued that there is a fundamental connection between the integrality of B-model ``BPS" (or ``instanton") numbers and arithmetic structures on de Rham cohomology.  Our arguments give a concrete realization of their ideas on the A-side of mirror symmetry. Namely, our proof of the main theorems build on salient geometric features of curve counting in Calabi--Yau threefolds, both in terms of the integrality prediction of the Gopakumar--Vafa invariants of Calabi--Yau $3$-folds, and the proof method discovered in Ionel--Parker's remarkable work \cite{IP-GV}, the cluster formalism\footnote{We refer the reader to  \cite{doan2021gopakumar} and \cite{pardon-MNOP} for other important developments in this circle of ideas. }. 

Theorem \ref{thm:integrality} is deduced directly from the integrality of genus-zero Gopakumar--Vafa invariants. Namely, we write down an explicit formula for the Frobenius intertwiner $\Phi$ in terms of derivatives of a certain generating series $\eta(q)$, the ``$p$-adically integral" version of the Gromov-Witten potential. This explicit formula is inspired by certain B-model computations in \cite{KSV06}. The integrality of Gopakumar-Vafa invariants implies that the series  $\eta(q)$ lies in $\Lambda_{\mathbb{Z}_p}.$ This in turn implies that the entire Frobenius map $\Phi$ satisfies the key condition \eqref{eq:keyintegralitycondition}. Reducing mod $p$, we obtain the Cartier operator \eqref{eq:AmodelCartier} from the Frobenius map $\Phi$ by dividing suitable powers of $p$.

The proofs of Theorems \ref{thm:Cartierconjugation} and  \ref{thm:main} require more technical work. We upgrade the cluster decomposition and isotopy method from \cite{IP-GV} so that we can study Gromov--Witten correspondences to account for the quantum product, and quantum Steenrod operators, which involve equivariant counts of genus $0$ $J$-holomorphic curves from \emph{inhomogeneous perturbations}. The upshot is that we can bootstrap from the case of the \emph{elementary clusters} where the neighborhood of a curve is that of a \emph{local $\mathbb{P}^1$-geometry}, that is to the case where $X$ is the Calabi--Yau 3-fold given as the total space of the bundle $\mathcal{O}(-1)^{\oplus 2} \to \mathbb{P}^1$. In this setting, the computation of the quantum connection is a classical result of Voision \cite[Theorem 1.1]{Voi96}. The computation of the quantum Steenrod operations is more recent and due to the second-named author \cite[Section 3]{Lee23a}; see \cref{thm:localP1-qst-is-pcurv} for an exposition.

An interesting consequence of our proof is that we obtain an explicit formula for the $p$-curvature $\psi_b$, equivalently, the quantum Steenrod operation $Q\Sigma_b$ for $b \in \mathrm{im}(H^2(X;\mathbb{Z}) \to H^2(X;\mathbb{F}_p))$, in terms of genus $0$ Gromov--Witten invariants, which can further be described using genus $0$ Gopakumar--Vafa invariants and the Yukawa coupling, i.e., incidence coefficients of the quantum product. See \eqref{eqn:p-curv-formula} for the concrete formula. 

\subsection{Outlook}\label{sect:outlook}
The results in this article give a detailed picture of the arithmetic structure of the quantum connection on Calabi--Yau $3$-folds. Even though the proofs are built on specific features of the enumerative geometry of $3$-folds, most notably the Gopakumar--Vafa integrality and cluster decomposition of moduli spaces of $J$-holomorphic curves, we expect the results to serve as sample statements of general structural aspects of arithmetic properties of quantum connections. Closing the introduction, we formulate conjectures concerning potential generalizations.


\subsubsection{Higher dimensional generalization of the Fontaine--Laffaile structure}
Our first sequence of conjectures is exactly the generalization of our results on Fontaine--Laffaile modules in higher dimensions, which we formulate as follows. We borrow the notations from above.

\begin{conj}\label{conj:p-adic-gamma}
    Let $(X, \omega)$ be a closed symplectic Calabi--Yau manifold. Let $p$ be a prime for which $H^{even}(X;\mathbb{Z}_p)$ contains no torsion and $\mathcal{H}^{2k} \subset QH^*(X; \Lambda_{\mathbb{Z}_p})[u^{\pm}]$ the subspace of degree $2k$ elements.
    \begin{enumerate}
    \item (\textbf{$p$-adic $\Gamma$-conjecture})   There is a unique Frobenius map
    \begin{align} \Phi: F^*QH^*(X; \Lambda_{\mathbb{Z}_p})[u^{\pm}] \to QH^*(X; \Lambda_{\mathbb{Z}_p})[u^{\pm},1/p] \end{align} 
    with leading order term 
    \begin{equation}
        \Phi_0(u^kx) = p^{\dim(X)+k} u^k\cdot(\Gamma_p(TX) \smile x), \quad x \in H^*(X;\mathbb{Z}_p)
    \end{equation}
   such that: \begin{itemize} \item the map $\Phi$ satisfies:   \begin{align} \label{eq:keyintegralitycondition2} \Phi(F^*\mathcal{H}^{2k}) \subset p^k\mathcal{H}^{2k},\quad k \in \mathbb{Z}.\end{align} 
    \item for $p>\operatorname{dim}_{\mathbb{C}}(X)$, the quadruple: \begin{equation}(\mathcal{H}^0,\nabla,\operatorname{Fil}^i,\Phi)
    \end{equation}
    defines a Fontaine--Laffaile module. \end{itemize}
    \item (\textbf{Inverse Cartier isomorphism}) The mod $p$ reduction of the divided Frobenius defines a Cartier operator 
    \begin{align} 
    \mathcal{C}^{-1}: F^*QH^*(X;\Lambda_{\mathbb{F}_p})[u^{\pm}] \to QH^*(X;\Lambda_{\mathbb{F}_p})[u^{\pm}]
    \end{align}
    such that the diagram \eqref{diagram:katz} is commutative.
    \item (\textbf{Katz's formula with quantum power operation}) Given any $b \in \mathrm{im}(H^2(X;\mathbb{Z}) \to H^2(X;\mathbb{F}_p))$, denote by $Q\Sigma_b$ the quantum Steenrod operation along $b$. The the diagram
    \begin{equation}
        \begin{tikzcd}
             F^*QH^*(X;\Lambda_{\mathbb{F}_p})[u^{\pm}]  \arrow[rr, "-u^{-1}F^*\mathbb{M}_b"] \dar["\mathcal{C}^{-1}"] &&  F^*QH^*(X;\Lambda_{\mathbb{F}_p})[u^{\pm}]  \dar["\mathcal{C}^{-1}"] \\
             QH^*(X;\Lambda_{\mathbb{F}_p})[u^{\pm}] \arrow[rr, "u^{-p}Q\Sigma_b"] && QH^*(X;\Lambda_{\mathbb{F}_p})[u^{\pm}]
        \end{tikzcd}
    \end{equation}
    is commutative.
    \end{enumerate}
\end{conj}

As noted above, there is a closely related $p$-adic Gamma conjecture in the Fano/monotone setting in \cite{bai2025p}. The true content of our Conjecture \ref{conj:p-adic-gamma} (1) in the \emph{$p$-adic integrality} of the coefficients of the Frobenius because the target consists of series with ``bounded $p$-divisibility in the denominator." In contrast, in the monotone setting, the key assertion of the $p$-adic Gamma conjecture is the overconvergence property of the Frobenius intertwiner. As for Conjecture (2) and (3), although it is stated as being obtained from mod $p$ reduction of the divided Frobenius over $\mathbb{Z}_p$, it would be interesting to see if there is a direct geometric construction of $\mathcal{C}^{-1}$ using Floer-theoretic moduli spaces.

\subsubsection{Non-commutative geometry}
 Given an odd prime $p$, let $\mathcal{A}$ be a DG-category over $\mathbb{Z}_p$ whose morphism spaces are flat. Denote by $\mathcal{A}_0 := \mathcal{A} \otimes_{\mathbb{Z}_p} \mathbb{F}_p$ the mod $p$ reduction. By \cite[Theorem 1]{petrov2019periodic} or \cite[Theorem 0.5.1]{raksitsanath}, we have an equivalence of spectra
\begin{equation}\label{eqn:nc-crys-dR}
   \sigma: \mathrm{TP}(\mathcal{A}_0)_{\hat{p}} \xrightarrow{\sim} \mathrm{HP}(\mathcal{A})_{\hat{p}}, 
\end{equation}
where we take the $p$-completion of topological periodic cyclic homology and periodic cyclic homology respectively. This is the non-commutative analogue of the comparison between crystalline cohomology and de Rham cohomology for a variety over $\mathbb{F}_p$ which is liftable to $\mathbb{Z}_p$. By Nikolaus--Scholze \cite{nikolaus-scholze}, if $\mathcal{A}_0$ is smooth and proper over $\mathbb{F}_p$, $\mathrm{TP}(\mathcal{A}_0)_{\hat{p}}[1/p]$ comes with a Frobenius endomorphism, which in turn gives rise to a Frobenius $\varphi_p$ on $\mathrm{HP}(\mathcal{A})_{\hat{p}}[1/p]$. When $\mathcal{A}=D^b\operatorname{Coh}(\mathcal{Y})$ for a smooth variety over $\mathbb{Z}_p$, this recovers, up to Tate twists, the crystalline Frobenius \cite[\S 6]{tabuadaweil}. 

If we consider DG categories defined over suitable $p$-adically integral rings $R$ (e.g. $R=\mathbb{Z}_p((q))$), the method of \cite{raksitsanath} extends to give a version $\sigma_R$ of \eqref{eqn:nc-crys-dR} relating a relative version of $\mathrm{TP}$ with relative periodic cyclic homology, $\mathrm{HP}(\mathcal{A}/R)_{\hat{p}}.$ We expect that induced cyclotomic Frobenius on $\mathrm{HP}(\mathcal{A}/R)_{\hat{p}}[1/p]$ is horizontal with respect to the Getzler-Gauss-Manin connection $\nabla^{\mathrm{GGM}}$ which is defined on $\mathrm{HP}(\mathcal{A}/R)$. Finally, the Getzler--Gauss--Manin connection $\nabla^{\mathrm{GGM}}$ satisfies Griffiths transversality with respect to the non-commutative analogue of the Hodge filtration, $\operatorname{Fil}^{\bullet}$. The upshot is that non-commutative geometry gives rise to a quadruple:
\begin{equation}
    (\mathrm{HP}(\mathcal{A}/R)_{\hat{p}}[1/p], \nabla^{\mathrm{GGM}}, \operatorname{Fil}^{\bullet}, \varphi_p).
\end{equation}

For suitable Calabi-Yau manifolds $X$,  Perutz-Sheridan \cite{rel-Fuk} have defined constructed a version of the Fukaya category $\operatorname{Fuk}(X)$ which is defined over $p$-adically integral Novikov rings $R$. The paper \cite{ganatra2025cyclic} defines an ``open-closed" map in this setting:  \begin{align}\label{eq:OCmap} \mathcal{OC}_{S^1}: \mathrm{HP}_*(\operatorname{Fuk}(X)/R) \to QH^*(X;R) \end{align}
which intertwines the Getzler-Gauss-Manin connection with the quantum connection: \begin{align} \mathcal{OC}_{S^1} \circ \nabla^{\mathrm{GGM}}= \nabla^{\mathrm{QH}} \circ \mathcal{OC}_{S^1}. \end{align}
\begin{conj}\label{conj:open-closed}
    After taking $p$-completion and inverting $p$, the cyclic open-closed map \eqref{eq:OCmap} intertwines the Frobenius maps : \begin{align} \mathcal{OC}_{S^1} \circ p^{\lfloor \operatorname{dim}_{\mathbb{C}}(X)/2 \rfloor} \varphi_p= \Phi \circ \mathcal{OC}_{S^1}. \end{align}
\end{conj}

In view of our $p$-adic Gamma conjecture in the Calabi--Yau setting, Conjecture \ref{conj:open-closed} asserts that the cyclotomic Frobenius $\varphi_p$ ``sees" the $p$-adic Gamma class $\Gamma_p(TX)$. This is consistent with the Candelas--de la Ossa--van Straten conjecture discussed above and mirror symmetry expectations, but seeing the geometric origin of such a conjectural statement is a very interesting problem.

This non-commutative discussion also applies to the families of $A_\infty$-categories arising from the wrapped Fukaya category of a smooth anti-canonical divisor complement in a monotone symplectic manifold $M$ (\cite{PS23}). In \emph{loc.cit.}, the authors combine the $p$-torsion Fontaine-Laffaille theory from \cite{PVV} with geometric considerations to study the Fourier transform of the quantum connection on $M$. The above discussion should apply to this situation to upgrade these constructions to the $p$-adic setting, which should have interesting consequences for studying quantum connections on monotone symplectic manifolds using the arithmetic Fourier--Laplace transform (cf. \cite{kedlaya-fourier}).

\subsubsection{Quantum power operation and p-curvature}
Theorem \ref{thm:main} proves a conjecture due to the second-named author \cite{Lee23b} in a broad class of examples. Previous work for other (non-compact) Calabi-Yau manifolds include \cite{Lee23a, bai-lee}. Away from the Calabi-Yau cases, Chen established a weaker version of \cref{thm:main} for monotone symplectic manifolds as a key step in his work on the exponential type conjecture for quantum connections of Fano varieties \cite{chen2024exponential}. Unpublished work of Pomerleano--Seidel elaborates on this argument to prove the analogue of \cref{thm:main} for monotone symplectic manifolds. The arguments in the monotone case are completely different and rely on algebraic considerations which are specific to the quantum connection on monotone symplectic manifolds. In a broader context, the work \cite{bai2025quantum} identifies the quantum Adams operation, i.e., the quantum power operation in the $K$-theory setting, with the analogue of the $p$-curvature of the $q$-difference connections defined using quasimap counts.

Assembling all of these examples together, we expect that variants of $p$-curvature of (generalizations of) the quantum connection should be equal to the quantum power operation whenever they can be defined (see \cite{Bai_Xu_2026} for how to define quantum product and quantum Steenrod operations on general symplectic manifold with mod $p$ coefficients). It is thus tempting to speculate that there should a moduli-theoretic approach to constructing an analog of the Cartier operator $\mathcal{C}^{-1}$ which conjugates the quantum Steenrod operation $Q\Sigma_b$ to the Frobenius-twisted quantum product $F^*\mathbb{M}_b$, and such an operator should also be able to relate the $p$-curvature $\psi_b$ with $F^*\mathbb{M}_b$ as in the setting of $p$-torsion Fontaine--Laffaile modules (cf. \cite{katz-nilpotent} and \cite{PVV}) even though we have to work with $\mathbb{Z}/2$-grading in general. It is likely that this should come from a piece of the cyclotomic structure on Hamiltonian Floer theory (\cite{rezchikov2024cyclotomic}).

\begin{rem}
    S. Rezchikov has proposed an analogous picture in noncommutative geometry. It should be possible to relate this picture to our discussion via a mod $p$ version of Conjecture \ref{conj:open-closed}.
\end{rem}

\subsection{Organization of the paper}
In Section 2, we recall the constructions of quantum cohomology and quantum connection, with an emphasis on the special properties of Gromov--Witten theory for Calabi--Yau threefolds. In Section 3, we recall the general notion of logarithmic Fontaine--Laffaille modules, and equip the $p$-adic quantum connection with that structure. In Section 4, we begin discussing the mod $p$ quantum connection, in particular we introduce the inverse Cartier operator obtained from the $p$-adic Frobenius intertwiner and its relationship to the quantum operations intrinsic to the mod $p$ quantum cohomology. In Section 5, we prove the equivalence between quantum Steenrod operations and the $p$-curvature via the cluster formalism, and we also prove the $A$-model analogue of Katz's formula for the $p$-curvature.

\subsection*{Acknowledgments} We would like to thank Paul Seidel for extensive collaboration (\cite{bai2025p, PS23}) and discussions on many topics closely related to the present paper. Concretely, he suggested the idea of using the cluster formalism to extend the computations from \cite{Lee23a} to more general examples in 2023. This proved to be a key inspiration for the present paper. We thank Eleny Ionel and John Pardon for very helpful discussions regarding the cluster formalism, Sanath Devalapurkar for discussions on topological cyclic homology, and Alexander Petrov and Daniel (Dongryul) Kim for discussions on $p$-adic Hodge theory.

\section{Gromov--Witten theory of Calabi--Yau 3-folds}

In this section, we recall the constructions of quantum cohomology and the quantum connection. We define the operations using pseudocycles to represent incidence constraints. The last subsection is focused on the special properties of genus $0$ Gromov--Witten theory for Calabi--Yau threefolds.

\subsection{Pseudocycles}\label{ssec:pseudocycles}
The purpose of this section is to recall the notion of (integral) pseudocycles, which geometrically represent the Poincar\'e dual of classes in $H^*(X;\mathbb{Z})$ for a smooth oriented manifold $X$. The intersection theory of pseudocycles will be used below to define the Gromov--Witten invariants.
\begin{defn}
    A smooth map $f: W \to X$ from a $k$-dimensional oriented manifold $W$ is called an (oriented $k$-dimensional) \emph{pseudocycle} if it satisfies the following properties.
    \begin{enumerate}
        \item $f(W)$ is precompact in $X$.
        \item The dimension of the $\Omega$-set of $f$,
        \begin{equation}
            \Omega_f := \bigcap_{K \subseteq W \mathrm{compact}} \ov{f(W\setminus K)},
        \end{equation}
        is at most $k-2$.
    \end{enumerate}
\end{defn}
A more detailed introduction to the notion of an (integral) pseudocycle can be found in \cite[Section 6.5]{MS12}, \cite{Zin08}.

\begin{defn}
    Two (oriented $k$-dimensional) pseudocycles $f_0: W_0 \to X$ and $f_1 : W_1 \to X$ are \emph{cobordant} if there exists a smooth map $\hat{f}: \hat{W} \to X$ from a $(k+1)$-dimensional oriented manifold with boundary $\hat{W}$ such that
    \begin{enumerate}
        \item $\hat{f}(\hat{W})$ is precompact in $X$.
        \item $\dim \Omega_{\hat{f}} \le k-1$.
        \item There is an oriented diffeomorphism $\partial \hat{W} \cong - W_0 \sqcup W_1$ such that $\hat{f}|_{W_i} = f_i |_{W_i}$.
    \end{enumerate}
\end{defn}
The set of bordism classes of $p$-pseudocycles are denoted by $\mathfrak{H}_k(X)$. Using disjoint union as addition and orientation reversal as inverses, $\mathfrak{H}_*(X) = \bigoplus_k \mathfrak{H}_k(X)$ forms a graded abelian group, where the grading comes from dimension.

If we denote $\mathfrak{H}_*(X)$ the abelian group of (integral) pseudocycle bordisms, Zinger \cite{Zin08} constructs a natural isomorphism
\begin{equation}
    \Psi_* : H_*(X;\mathbb{Z}) \cong \mathfrak{H}_*(X).
\end{equation}
 
\begin{prop}
    There is a well-defined intersection pairing
    \begin{equation}
        \cap : \mathfrak{H}_*(X) \otimes \mathfrak{H}_*(X) \to \mathbb{Z}.
    \end{equation}
\end{prop}

Note that this pairing factors through the quotient of $\mathfrak{H}_*(X)$ by its torsion subgroup.

Fix a ground coefficient commutative ring $\mathbb{K}$. Consider the quotient of cohomology by the torsion subgroup, so that  
\begin{equation}    
\bar{H}^*(X;\mathbb{Z}) \cong \bigoplus_k H^k(X;\mathbb{Z})/H^k(X;\mathbb{Z})_{tor}
\end{equation}
is the torsion-free part of $H^*(X;\mathbb{Z})$. Similarly, let
$$\bar{H}_*(X;\mathbb{Z}):=H_*(X;\mathbb{Z})/H_*(X;\mathbb{Z})_{tor}$$ 
denote the torsion-free part of $H_*(X;\mathbb{Z})$. Then Poincar\'e duality induces a map
\begin{equation}
    \mathrm{PD}: \bar{H}^*(X;\mathbb{Z}) \otimes \mathbb{K} \to \bar{H}_*(X;\mathbb{Z}) \otimes \mathbb{K}.
\end{equation}

For convenience, we introduce the following notation:

\begin{defn}
    Let $X$ be a closed oriented manifold. Let $a \in \bar{H}^*(X;\mathbb{Z})\otimes \mathbb{K}$, and denote its Poincar\'e dual homology class by $\mathrm{PD}[a] \in \bar{H}_*(X;\mathbb{Z}) \otimes \mathbb{K}$. Then
    \begin{equation}
        \Psi(a) := \Psi_*(\mathrm{PD}[a]) \in \mathfrak{H}_*(X)/\mathfrak{H}_*(X)_{tor} \otimes \mathbb{K}
    \end{equation}
    is the bordism class of a pseudocycle representing the Poincar\'e dual of the cohomology class $a \in \bar{H}^*(X;\mathbb{Z})\otimes \mathbb{K}$.
\end{defn}
By a slight abuse of notation, we may denote by $\Psi(a)$ some pseudocycle representing the bordism class $\Psi(a)$.

\subsection{Quantum cohomology}\label{ssec:qh}

In this section, we recall our conventions for the quantum products. We fix $(X, \omega)$ to be a closed Calabi--Yau symplectic manifold, i.e., $c_1(TX) = 0$, and we additionally require that $\pi_1(X) = 1$. We further assume that the symplectic class $[\omega] \in H^2(X;\mathbb{Z})$ is integral, which can always be achieved by perturbing and rescaling. 

Let us begin by describing the Novikov ring over a fixed commutative ring $\mathbb{K}$. We assume that $$\bar{H}^*(X;\mathbb{Z}) \otimes \mathbb{K} \cong H^*(X;\mathbb{K});$$ if $\mathbb{K} = \mathbb{F}_p$, this amounts to the assumption that $H^*(X;\mathbb{Z})$ has no $p$-torsion.

\begin{defn}
The \emph{Novikov ring} $\Lambda:= \Lambda_{\mathbb{K}}$ over $\mathbb{K}$ is the ring with elements given by formal power series:
\begin{equation}
\sum_{\beta \in \bar{H}_2(X; \mathbb{Z}), \int_\beta \omega \geq 0} c_\beta q^\beta, \quad c_\beta \in \mathbb{K}
\end{equation}
such that for any fixed $E \geq 0$, there are only finitely many $c_\beta \neq 0$ for all $\beta \in \bar{H}_2(X; \mathbb{Z})$ satisfying $\int_\beta \omega \leq E$. We denote by  $\Lambda_{+} \subset \Lambda_{\mathbb{K}}$ denote the ideal of elements with strictly positive $\omega$-valuation. 
\end{defn}

 Let $\bar{H}_2(X,\mathbb{Z})_{\geq 0} \subset \bar{H}_2(X,\mathbb{Z})$ denote the monoid of classes $\beta$ with $\omega([\beta]) \geq 0.$ The Novikov ring carries a natural collection of logarithmic derivations indexed by $b \in H^2(X;\mathbb{K})$. Namely, any $b \in H^2(X;\mathbb{K}) = \bar{H}^2(X;\mathbb{K})$ induces a logarithmic derivation of $\Lambda$:
\begin{align}
\partial_b : \Lambda_\mathbb{K} \to \Lambda_\mathbb{K}, \quad
\sum_\beta c_\beta q^\beta \to \sum_\beta c_\beta (b \cdot \beta) q^\beta \nonumber
\end{align}
where $(b \cdot \beta)$ denotes the natural pairing between $H^2(X)$ and $\bar{H}_2(X)$.

\begin{defn}
    The \emph{quantum cohomology} is additively the $\Lambda_{\mathbb{K}}$-module
    \begin{equation}
        QH^*(X;\Lambda_{\mathbb{K}}) := H^{even}(X;\Lambda_{\mathbb{K}}) = \bigoplus_{k \in 2 \mathbb{Z}} H^k (X;\Lambda_{\mathbb{K}}).
    \end{equation}
\end{defn}

The cohomology module with Novikov coefficients $H^{even}(X;\Lambda_{\mathbb{K}})$ admits the (small) quantum product $\star_q$ using the three-pointed Gromov--Witten invariants and the Poincar\'e pairing $(\cdot, \cdot) : H^*(X;\mathbb{K}) \otimes H^*(X;\mathbb{K}) \to \mathbb{K}$. One can extend the Poincar\'e pairing to $H^*(X;\Lambda)$ by first extending $q^\beta$-linearly for a finite truncation $\Lambda_{\le E} \subseteq \Lambda$ given a bound $\int_\beta \omega \le E$ as a map $(\cdot, \cdot) : H^*(X;\Lambda_{\le E})^{\otimes 2} \to \Lambda_{\le 2E}$ and taking the limit.

The three-pointed Gromov--Witten invariants and the quantum product is defined as follows. Let $J$ be an $\omega$-compatible almost complex structure. Fix the curve $ \mathbb{P}^1$ with its fundamental class $[\mathbb{P}^1] \in H_2(\mathbb{P}^1;\mathbb{Z})$, and consider the three-pointed genus $0$ Gromov--Witten moduli space of maps satisfying the $J$-holomorphic curve equation (with source a parametrized copy of $\mathbb{P}^1$)
\begin{equation}
    \mathcal{M}_\beta = \mathcal{M}_{0,3}(X, J,  \beta)  := \{ u : \mathbb{P}^1 \to X: \overline{\partial}_J u = 0,  u_*[\mathbb{P}^1] = \beta \}.
\end{equation}
For a $C^\infty$-generic choice of $J$, it can be arranged that all \emph{simple} maps (those that do not factor through a nontrivial branched covering $\mathbb{P}^1 \to \mathbb{P}^1$) are transversely cut out. Hence, the moduli space of simple maps is a manifold of dimension $\dim_{\mathbb{R}} \mathcal{M}_\beta = \dim_{\mathbb{R}} X + 2c_1(\beta) = \dim_{\mathbb{R}} X$, which is equipped with a natural orientation as the linearized $\ov{\partial}_J$-operator can be deformed to be $\mathbb{C}$-linear.

The moduli space admits the stable map compactification $\ov{\mathcal{M}}_\beta$, and since the source curve of the maps $u : \mathbb{P}^1 \to X$ is parametrized, there are evaluation maps
\begin{align}
    \mathrm{ev} = \mathrm{ev}_0 \times \mathrm{ev}_\infty \times \mathrm{ev}_1 : \ov{\mathcal{M}}_\beta &\to X \times X \times X \\
    u &\mapsto (u(0), u(\infty), u(1)).
\end{align}
It is standard that, under our Calabi--Yau assumption, the evaluation map $\mathrm{ev}$ restricted to the simple maps in $\mathcal{M}_\beta$ form an integral pseudocycle in $X \times X \times X$, whose bordism class is denoted by
\begin{equation}
    \mathrm{ev}_* [\ov{\mathcal{M}}_\beta] \in \mathcal{H}_*(X \times X \times X) \cong H_*(X \times X \times X ; \mathbb{Z}).
\end{equation}
The bordism class is independent of the choice of $J$ by a standard cobordism argument.

\begin{defn}
    Let $a_1, a_2, b \in H^*(X;\mathbb{K})$. The \emph{quantum product correlator} is the $3$-pointed Gromov-Witten invariant
    \begin{equation}
        \langle a_1, a_2, b \rangle_\beta := \mathrm{ev}_*[\overline{\mathcal{M}}_\beta] \cap \Psi(a_1 \otimes a_2 \otimes b) \in \mathbb{K}
    \end{equation}
    where $\cap$ denotes the intersection pairing of pseudocycles in $X \times X \times X$.
\end{defn}

One can easily extend the correlator $q^\beta$-multi-linearly to insert classes from $H^*(X;\Lambda_{\mathbb{K}})$. We define the quantum product by using the quantum product correlators as structure constants:

\begin{defn}
    On $H^*(X;\Lambda)$, the (small) quantum product $$\star := \star_q : H^*(X;\Lambda_{\mathbb{K}}) \otimes H^*(X;\Lambda_{\mathbb{K}})\to H^*(X;\Lambda_{\mathbb{K}})$$ is the bilinear operation defined by
    \begin{equation}
        \left( b \star_q a_1, a_2 \right) = \sum_{\beta} \langle a_1, a_2, b \rangle_\beta \  q^\beta \in \Lambda_{\mathbb{K}},
    \end{equation}
    where $(\cdot, \cdot)$ denotes the Poincar\'e pairing. It is well-defined (in the sense that the right hand side is genuinely an element of $\Lambda$) by Gromov compactness. The operation $\star$ defines a graded-commutative and associative product on $H^*(X;\Lambda)$ \cite[Chapter 9]{MS12}, giving rise to the ring structure on quantum cohomology.
\end{defn}

It is useful to consider the quantum cohomology not just as a ring, but as a collection of $\Lambda$-linear operators indexed by cohomology classes $b \in H^*(X;\mathbb{K})$:
\begin{equation}
    \mathbb{M}_b := b \  \star_q \in \mathrm{End}(H^*(X;\Lambda_{\mathbb{K}})).
\end{equation}
For later use, we describe the following alternative characterization of this operator via correspondences. Let $b \in H^*(X;\mathbb{K})$ be a cohomology class whose Poincar\'e dual is represented by a pseudocycle $\Psi(b): B \to X$. Take the fiber product 
    \begin{equation}
        \begin{tikzcd}
{\overline{\mathcal{M}}_{0,3}(X, J, \beta)_b} \arrow[d] \arrow[r]  & {\overline{\mathcal{M}}_{0,3}(X, J, \beta)} \arrow[d, "\mathrm{ev}_1"] \\
B \arrow[r, "\Psi(b)"']                                       & X            
\end{tikzcd}
    \end{equation}

and consider the evaluation map
\begin{equation}
      \mathrm{ev}_0 \times \mathrm{ev}_\infty: {\overline{\mathcal{M}}_{0,3}(X, J, \beta)_b}  \to X \times X
\end{equation}
whose restriction to the simple locus defines an integral pseudocycle if the pseudocycle representative for $b$ is chosen generically, hence a homology class in $H_*(X \times X ; \mathbb{K}) \cong \mathcal{H}_*(X\times X) \otimes \mathbb{K}$. Denote this homology class by $\mathrm{ev}_*[\ov{\mathcal{M}}_{A,b}]$.

\begin{lemma}\label[lemma]{lemma:quantum-product-correspondence}
    The operator $\mathbb{M}_b$ is induced by the correspondences $\mathrm{ev}_0 \times \mathrm{ev}_\infty: {\overline{\mathcal{M}}_{0,3}(X, J, \beta)_b}  \to X \times X$.
\end{lemma}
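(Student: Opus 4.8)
The plan is to unwind both sides against the Poincaré pairing and reduce the lemma to the compatibility of the fiber product with the intersection theory of $p$-pseudocycles. Recall that a correspondence $f = f_{\mathrm{in}} \times f_{\mathrm{out}} : C \to X \times X$ given by an oriented $p$-pseudocycle $C$ induces the $\Lambda$-linear endomorphism $\Psi_C$ of $QH^*(X;\mathbb{F}_p)$ characterized by
\[
 (\Psi_C(a_1), a_2) = f_*[C] \cap \Phi(a_1 \otimes a_2), \qquad a_1, a_2 \in H^*(X;\mathbb{F}_p),
\]
extended $q^A$-linearly over $\Lambda$, with the orientation and grading conventions on $C$ fixed so that this pairing has the expected degree. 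Taking $C = \overline{\mathcal{M}}_{0,3}(X,J,A)_b$ and $f = \mathrm{ev}_0 \times \mathrm{ev}_\infty$, and summing over $A$ with weights $q^A$, the lemma amounts to the identity
\[
 (\mathrm{ev}_0 \times \mathrm{ev}_\infty)_*[\overline{\mathcal{M}}_{A,b}] \cap \Phi(a_1 \otimes a_2) \;=\; \mathrm{ev}_*[\overline{\mathcal{M}}_A] \cap \Phi(a_1 \otimes a_2 \otimes b) \;=\; \langle a_1, a_2, b\rangle_A
\]
in $\mathbb{F}_p$ for all $A$ and all $a_1, a_2$: granting it, $(\Psi_C(a_1), a_2) = \sum_A \langle a_1, a_2, b\rangle_A q^A = (\mathbb{M}_b(a_1), a_2)$ for every $a_2$, and non-degeneracy of the Poincaré pairing forces $\Psi_C = \mathbb{M}_b$.

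To prove the displayed identity I would argue by general position. First choose $J$ generic so that the simple locus $\mathcal{M}_A^{\mathrm{smp}}$ is regular, then choose $p$-pseudocycle representatives $\Phi(a_1): A_1 \to X$, $\Phi(a_2): A_2 \to X$, $\Phi(b): B \to X$ generically so that $\mathrm{ev}_0, \mathrm{ev}_\infty, \mathrm{ev}_1$ on $\mathcal{M}_A^{\mathrm{smp}}$ are mutually transverse to these representatives. Then the middle term $\mathrm{ev}_*[\overline{\mathcal{M}}_A] \cap \Phi(a_1\otimes a_2\otimes b)$ is, by definition of the intersection pairing, the signed mod $p$ count of tuples $(u, x_1, x_2, x_b) \in \mathcal{M}_A^{\mathrm{smp}} \times A_1 \times A_2 \times B$ with $u(0) = \Phi(a_1)(x_1)$, $u(\infty) = \Phi(a_2)(x_2)$, $u(1) = \Phi(b)(x_b)$; the compactification strata $\overline{\mathcal{M}}_A \setminus \mathcal{M}_A$, the non-simple maps, the $\Omega$-sets of the pseudocycles, and their $p$-fold-covered boundaries contribute nothing for dimension reasons — precisely the estimates already used (under the Calabi--Yau/weak monotonicity hypothesis) to know that $\mathrm{ev}$ defines a pseudocycle and that $\overline{\mathcal{M}}_{A,b}$ defines one. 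On the other side, over the regular simple locus $\overline{\mathcal{M}}_{A,b}$ is the manifold of pairs $(u, x_b)$ with $u(1) = \Phi(b)(x_b)$, its codimension-one boundary is the $p$-fold cover pulled back from $\partial B$, so $\mathrm{ev}_0 \times \mathrm{ev}_\infty$ on it is again a $p$-pseudocycle, and pairing it with $\Phi(a_1\otimes a_2)$ counts exactly the same tuples with the same signs. This is just associativity of iterated fiber products together with the fact that in general position the intersection pairing is computed by an honest transverse count, so the two counts agree in $\mathbb{F}_p$.

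The main obstacle is making sure all of this is legitimate at the level of $p$-pseudocycle bordism classes, and not merely for one fortunate choice of $(J, \Phi(a_1), \Phi(a_2), \Phi(b))$: one must verify that the bad loci (the $\Omega$-sets, the strata $\overline{\mathcal{M}}_A \setminus \mathcal{M}_A$, the non-simple maps, and the $p$-fold-covered boundary pieces) have the codimension needed so that neither side of the identity ever meets them, and that the $p$-fold covering structure on $\partial \overline{\mathcal{M}}_{A,b}$ is exactly the one inherited from $\partial B$, so that the count is well defined modulo $p$ and matches on the two sides. This is the standard transversality-and-compactness package for Gromov--Witten pseudocycles under the Calabi--Yau hypothesis, already invoked in the constructions above; once it is in place the lemma follows formally as in the first paragraph, with the orientation and sign bookkeeping (matching the conventions defining $\Psi_C$ with those defining $\star_q$) fixed once and for all when the correspondence formalism is set up.
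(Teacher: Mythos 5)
Your proposal is correct and reduces the lemma to the same key identity as the paper, namely that $(\mathrm{ev}_0\times\mathrm{ev}_\infty)_*[\overline{\mathcal{M}}_{A,b}]\cap\Phi(a_1\otimes a_2)=\langle a_1,a_2,b\rangle_A$; the paper verifies this by a formal projection-formula computation with the pushforwards $(\pi_2)_*$ and $(\pi_{12})_*$, whereas you verify it by exhibiting both sides as the same transverse point count in general position, which is the geometric content of that computation. The transversality and codimension caveats you flag are exactly the standard package the paper also invokes implicitly, so the two arguments are essentially the same.
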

\begin{proof}
    Denote the two projections from $X\times X$ by $\pi_1, \pi_2 : X\times X \to X$, and similarly let $\pi_{12} : X\times X \times X \to X\times X$ to be the projection to first two factors. It suffices to show that
    \begin{equation}
        (\pi_2)_* \left( \pi_1^* a_1 \cap \mathrm{ev}_*[\ov{\mathcal{M}}_{\beta,b}] \right) \cap a_2 = \langle a_1, a_2, b \rangle_\beta.
    \end{equation}
    The desired result follows from 
    \begin{align}
        (\pi_2)_* \left( \pi_1^* a_1 \cap \mathrm{ev}_*[\ov{\mathcal{M}}_{\beta,b}] \right) \cap a_2 &= (\pi_2)_* \left( \pi_1^* a_1 \cap \pi_2^* a_2 \cap \mathrm{ev}_*[\ov{\mathcal{M}}_{\beta,b}] \right) \\
        &= (\pi_2)_* \left( (a_1 \otimes a_2) \cap \mathrm{ev}_*[\ov{\mathcal{M}}_{\beta,b}] \right) \\
        &= (\pi_2)_* \left((\pi_{12})_* (a_1 \otimes a_2 \otimes b \cap \mathrm{ev}_*[\ov{\mathcal{M}_\beta}])  \right) \\
        &= (\pi_{2} \circ \pi_{12})_* \langle a_1, a_2, b \rangle_\beta \\ &= 
        \langle a_1, a_2, b \rangle_\beta  .
    \end{align}
\end{proof}


\subsection{Quantum connections}

The \emph{quantum connection} or Dubrovin--Givental connection \cite{Dub98, Giv95, seidel-connections} is the $\mathbb{G}_m$-equivariant lift of the operators $\mathbb{M}_b$, where $\mathbb{G}_m = \mathbb{G}_m(\mathbb{C})$ denotes the multiplicative group of nonzero complex numbers.

\begin{lemma}\label{lem:group-cohomology-G_m}
    The group cohomology of $\mathbb{G}_m$ with $\mathbb{Z}$-coefficients
    \begin{equation}
        H^*(B\mathbb{G}_m ; \mathbb{Z}) \cong \mathbb{Z} [u]
    \end{equation}
    is a graded polynomial algebra on the generator of degree $|u|=2$.
\end{lemma}

\begin{defn}
    The \emph{loop-equivariant quantum cohomology} is the graded $\Lambda$-module defined as
    \begin{equation}
        QH^*(X;\Lambda_{\mathbb{K}})[u] := H^{even}(X;\Lambda_{\mathbb{K}})[u]
    \end{equation}
    where $|u| = 2$. We also consider the version where $u$ is inverted, i.e.
\begin{align} 
QH^*(X;\Lambda_{\mathbb{K}})[u^\pm]:=H^{even}(X; \Lambda_\mathbb{K})[u,u^{-1}]. 
\end{align}
\end{defn}
Here, we remark that there is no $\mathbb{G}_m$-action on the target symplectic manifold $X$; the action of $\mathbb{G}_m$ should be considered as a \emph{loop rotation} on the source curve $C$ for $J$-holomorphic maps $u: C \to X$. 

Note that because $u$ has degree 2, the loop-equivariant quantum cohomology is ``2-periodic" once $u$ is inverted. Namely, the degree zero component of $QH^*(X;\Lambda_{\mathbb{K}})[u^\pm]$ has the decomposition
\begin{align} 
\bigoplus_{k=0}^{\frac{1}{2}\dim_{\mathbb{R}}X} u^{-k}H^{2k}(X;\Lambda_{{\mathbb{K}}}),
\end{align}  
which becomes isomorphic to $QH^*(X;\Lambda_{\mathbb{K}})[u^\pm]$ after taking tensor product with $\mathbb{Z}[u, u^{-1}]$.

There is no natural ring structure on $QH^*(X;\Lambda_{\mathbb{K}})[u^\pm]$. However, it still carries an action of certain connection-type operators. Recall that any $b \in H^2(X;\mathbb{K})$ induces a logarithmic derivation of $\Lambda$:
\begin{align}
\partial_b : \Lambda_\mathbb{K} \to \Lambda_\mathbb{K}, \quad
\sum_\beta c_\beta q^\beta \mapsto\sum_\beta c_\beta (b \cdot \beta) q^\beta \nonumber,
\end{align}
which can be extended $u$-linearly to $\Lambda[u]$.

\begin{defn}
Let $b \in H^2(X;\mathbb{K})$. The \emph{(small) quantum connection} is the $\mathbb{K}[u^\pm]$-linear endomorphism on loop-equivariant quantum cohomology, indexed by $ b \in H^2(X;\mathbb{K})$ defined by:  
\begin{equation}
\nabla_b := \partial_b + u^{-1} \mathbb{M}_b : QH^*(X;\Lambda_{\mathbb{K}})[u^\pm] \to QH^*(X;\Lambda_{\mathbb{K}})[u^\pm],
\end{equation}
where $\mathbb{M}_b$ is the $u$-linear extension of the quantum product by $b$ considered as a $\Lambda_{\mathbb{K}}$-linear endomorphism of $QH^*(X;\Lambda_{\mathbb{K}})$.
\end{defn}

The quantum connection is reminiscent of \emph{$\lambda$-connections} of Deligne from nonabelian Hodge theory (see e.g. \cite{simpson-hodge-nonabelian}), where the role of $\lambda$ is played in this setting by the equivariant parameter $u$. In particular, the quantum connection lifts the quantum product to loop-equivariant quantum cohomology in the sense that the following diagram commutes:
\begin{center}
    \begin{tikzcd}
        QH^*(X;\Lambda_{\mathbb{K}})[u] \rar["u\nabla_b"] \dar & QH^*(X;\Lambda_{\mathbb{K}})[u] \dar\\ QH^*(X;\Lambda_{\mathbb{K}}) \rar["\mathbb{M}_b"] & QH^*(X;\Lambda_{\mathbb{K}})
    \end{tikzcd}
\end{center}
where the vertical maps specialize $u=0$, forgetting the $\mathbb{G}_m$-equivariance (note that $u\nabla_b$ can be defined without poles in $u$).

\subsection{Quantum products of CY3}\label{ssec:qhCY3}
In this section, we describe the special properties of the quantum cohomology of Calabi--Yau threefolds, i.e., the special case of $\dim_{\mathbb{R}} X = 6$. We retain our assumptions that $c_1(X) = 0$, $\pi_1(X) = 1$, and $[\omega] \in H^2(X;\mathbb{Z})$. 

\subsubsection{Preferred basis of cohomology}
Our notation for the (torsion-free part of) even cohomology
\begin{equation}
    H^{even}(X) :=  \bar{H}^0(X;\mathbb{Z}) \oplus \bar{H}^2(X;\mathbb{Z}) \oplus \bar{H}^4(X;\mathbb{Z}) \oplus \bar{H}^6 (X;\mathbb{Z}),
\end{equation}
where $\bar{H}^k(X;\mathbb{Z}) = H^k(X;\mathbb{Z})/H^k(X;\mathbb{Z})_{tor}$ is the quotient by the torsion subgroup, is as follows. We denote the Poincar\'e dual of the fundamental class by $ 1 \in H^0(X;\mathbb{Z})$, and the Poincar\'e dual of a point by $[\mathrm{pt}] \in H^6(X;\mathbb{Z})$. Since $\pi_1(X) = 1$, by universal coefficient theorem, $H^2(X;\mathbb{Z})$ is torsion-free. We choose an integral basis 
\begin{equation}
    e_1, \dots, e_r \in H^2(X;\mathbb{Z}),
\end{equation} and assume that $e_r$ is a positive primitive multiple of the class of the symplectic form $[\omega]$. The corresponding linear dual basis is denoted by
\begin{equation}
    \beta_1, \dots, \beta_r \in \bar{H}_2(X;\mathbb{Z}) = H_2(X;\mathbb{Z})/H_2(X;\mathbb{Z})_{tor},
\end{equation}
and their Poincar\'e duals are denoted by
\begin{equation}
    L_1, \dots, L_r \in \bar{H}^4(X;\mathbb{Z}).
\end{equation}

\subsubsection{Quantum connection of CY3}
Using the dual basis $\beta_1, \dots, \beta_r \in \bar{H}_2(X;\mathbb{Z})$, we may identify the terms $q^\beta \in \Lambda$ with monomials in the variables $q_1 := q^{\beta_1}, \dots, q_r = q^{\beta_r}$, so that in these coordinates we may write
\begin{equation}
    \Lambda_{\mathbb{K}} = \mathbb{K} [q_1^\pm, \dots, q_{r-1}^\pm] [\![q_r]\!],
\end{equation}
and the log derivations corresponding to $b = e_k$ ($1 \le k \le r$) can be alternatively written as
\begin{equation}
    \partial_{e_k} = q_k \partial_{q_k}
\end{equation}
in these coordinates, so that $\nabla_{e_k} = q_k \partial_{q_k} + u^{-1} \mathbb{M}_{e_k}$. To explicitly determine the connection, it suffices to compute the quantum product operators $\mathbb{M}_b$ for each $b  = e_k \in H^2(X;\mathbb{Z})$. In our notation, the relevant structure constants (the quantum product correlators) are given as follows:

\begin{defn}\label{defn:yukawa}
    Let $1 \le i, j, k \le r$. The \emph{Yukawa coupling} is the element of the Novikov ring
    \begin{equation}
        Y_{ijk}(q) := \sum_{\beta, \omega(\beta) \ge 0} \langle e_i, e_j, e_k \rangle_{\beta}  \ q^\beta \in \Lambda_{\mathbb{K}}.
    \end{equation}
\end{defn}

\begin{lemma}\label{lem:qproduct-computation}
In the preferred homogeneous basis given by $1 \in H^0(X), e_i \in H^2(X), L_i \in H^4(X), [\mathrm{pt}] \in H^6(X)$, the quantum product $\mathbb{M}_{e_k}$ for $b = e_k \in H^2(X;\mathbb{Z})$, $1 \le k \le r$, is given by
\begin{align} \label{eq:matricesquantumproduct}
\mathbb{M}_{e_k} (1) &= e_k, \\
\mathbb{M}_{e_k} (e_i) &=  \sum_{j} Y_{ijk}(q) L_j ,\nonumber \\
\mathbb{M}_{e_k}(L_i) &= \delta_{ik} [\mathrm{pt}], \\
\mathbb{M}_{e_k} ([\mathrm{pt}])&= 0 \nonumber  .
\end{align}
\end{lemma}
\begin{proof}
Based on Definition \ref{defn:yukawa}, the computation follows from the fact that $\mathbb{M}_{e_k}$ is an operation of cohomological degree $2$, and the Poincar\'e pairing $(e_k, L_i) = \delta_{ik}$. 
\end{proof}

\subsubsection{Gopakumar--Vafa integrality}
The Yukawa coupling may also be described in terms of generating functions of $0$-pointed Gromov--Witten invariants,
\begin{equation}
    \langle \rangle_\beta := [\mathcal{M}_{\beta, 0, 0}] \in \mathbb{Q}.
\end{equation}
Note that $0$-pointed genus $0$ Gromov--Witten invariants are defined using moduli spaces within the stable range (meaning at least three marked points in the genus $0$ setting), hence are not defined over the integers. Therefore, for the following definition, we restrict to $\mathbb{K} = \mathbb{Q}$.

\begin{defn}\label{defn:gw-potential}
    The \emph{genus 0 Gromov--Witten potential} is the element of the rational Novikov ring
    \begin{equation}
        G(q) := \sum_{\beta \neq 0, \omega(\beta) \ge 0} \langle \rangle_{\beta} \ q^\beta \in \Lambda_{\mathbb{Q}}.
    \end{equation}
\end{defn}

By the divisor axiom of Gromov--Witten theory, we have the following.
\begin{lemma}\label{lem:yukawa-as-third-derivative}
    Let $\delta_k := \partial_{e_k} = q_k \partial_{q_k}$ be the Novikov derivation in the direction of $e_k \in H^2(X;\mathbb{Z})$. Then
    \begin{equation}
        Y_{ijk}(q) = Y_{ijk}(0) + \delta_i \delta_j \delta_k  G(q) \in \Lambda_{\mathbb{K}}.
    \end{equation}
\end{lemma}
Note that \emph{a priori}, the coefficients of $\delta_i \delta_j \delta_k  G(q)$ are rational numbers. However, the divisor axiom equates the coeffcients with certain genus $0$ Gromov--Witten invariants defined using three-pointed curves, which are integral using the intersection pairing of pseudocycles. Therefore, $\delta_i \delta_j \delta_k  G(q) \in \Lambda_{\mathbb{K}}$ via the homomorphism $\Lambda_{\mathbb{Z}} \to \Lambda_{\mathbb{K}}$ induced by the inclusion of the identity $\mathbb{Z} \to \mathbb{K}$.

The following is a deep theorem of Ionel--Parker \cite{IP-GV} (restricted to the case of genus $0$ counts), implying strong structural properties of the Gromov--Witten invariants $\langle \rangle_\beta$. For a class $\beta$ with $\omega(\beta) \ge 0$, consider the following logarithmic integral series,
\begin{equation}
    \mathrm{Li}_3(q^\beta) = \sum_{d \ge 1} \frac{q^{d \beta}}{d^3} \in \mathbb{Q}[\![q^\beta]\!],
\end{equation}
an element of the rational power series ring.

\begin{thm}[{Gopakumar--Vafa integrality, \cite{IP-GV}}]
    \label{thm:gv-integrality}There exist integers $n_\beta \in \mathbb{Z}$ such that
    \begin{equation}
        G(q) = \sum_{\beta \neq 0, \omega(\beta) \ge 0 } n_\beta \ \mathrm{Li}_3 (q^\beta) \in \Lambda_{\mathbb{Q}}.
    \end{equation}
\end{thm}
The integrality of $n_\beta$ is consistent with the observation that the Yukawa couplings are defined integrally; indeed, the denominators in $\mathrm{Li}_3(q^\beta)$ are cancelled upon taking logarithmic derivatives three times:
\begin{equation}
    Y_{ijk}(q) - Y_{ijk}(0) = \delta_i \delta_j \delta_k G(q) = \sum_{\beta \neq 0 } n_\beta \sum_{ d \ge 1} (d \cdot \beta(e_i) ) (d \cdot \beta(e_j)) (d \cdot \beta(e_k))  \frac{q^{d \beta}}{d^3}
\end{equation}
is integral; we have
\begin{equation}
   \sum_{\beta \neq 0}  \langle e_i, e_j, e_k \rangle_\beta  \ q^\beta =  \sum_{\beta \neq 0} n_\beta \sum_{d \ge 1} \beta(e_i) \beta(e_j) \beta(e_k) q^{d \beta} \in \Lambda_{\mathbb{Z}}. 
\end{equation}

The coefficients $n_\beta$ are called the \emph{genus $0$ Gopakumar--Vafa invariants} or the \emph{instanton numbers}.

\subsection{Local $\mathbb{P}^1$ Calabi--Yau 3-fold}\label{ssec:local-p1-geometry}

Ionel--Parker's proof of Gopakumar--Vafa integrality is obtained by establishing a structure theorem for Gromov--Witten invariants, dubbed the \emph{cluster formalism}. The cluster formalism is reviewed in detail in \cref{ssec:clusters} below. 

Roughly speaking, the cluster formalism guarantees that the Gromov--Witten theory of Calabi--Yau threefolds is governed by the standard local model given by the local $\mathbb{P}^1$; In this section, we review the geometry of local $\mathbb{P}^1$ and describe its quantum connection.

\subsubsection{Local $\mathbb{P}^1$ and its small quantum connection}
We consider the geometry of a local $\mathbb{P}^1$, which is the noncompact Calabi--Yau $3$-fold
\begin{equation}
    Y = \mathrm{Tot} \left( \mathcal{O}(-1) \oplus \mathcal{O}(-1) \to \mathbb{P}^1 \right),
\end{equation}
the total space of the rank 2 vector bundle $\mathcal{O}_{\mathbb{P}^1}(-1)^{\oplus 2}$, together with its integrable complex structure and the K\"ahler form given by its description as a small resolution of the quadric cone $\{x^2+y^2+z^2+w^2 = 0\} \subseteq \mathbb{C}^4$.  In particular, the zero section $\mathbb{P}^1$ is a K\"ahler submanifold of $Y$. 

By maximum principle, all non-constant $J$-holomorphic spheres in $Y$ must map into the zero section $\mathbb{P}^1$. Hence, as a set, the moduli space of $J$-holomorphic spheres into $Y$ is isomorphic to the moduli space of $J$-holomorphic spheres into $\mathbb{P}^1$; however, their obstruction theories differ (the former carries an obstruction bundle arising from the normal directions, see \cite{Voi96, Lee23a}).

\subsubsection{Quantum operations on non-compact targets}
Since $Y$ is non-compact, the discussion of the quantum product as Novikov-linear endomorphisms of $H^*(Y) := \bar{H}^*(Y;\mathbb{Z}) = H^*(Y;\mathbb{Z})/H^*(Y;\mathbb{Z})_{tor}$ requires some modification. Recall that the (small) quantum product is defined using $J$-holomorphic genus $0$ spheres with $3$-marked points together with its evaluation maps,
\begin{equation}
    \mathrm{ev}: \overline{M}_{\beta} = \overline{\mathcal{M}}_{0,3} (Y,J,\beta) \to Y \times Y \times Y
\end{equation}
which defines a pseudocycle $\mathrm{ev}_*[\overline{\mathcal{M}}_{\beta}] \in H_*(Y^3;\mathbb{Z})$ assuming sufficient positivity of the target $Y$. The corresponding 3-pointed Gromov--Witten invariants
\begin{equation}
    \langle a_1, a_2, b \rangle_{0, 3, \beta} = \mathrm{ev}_*[\overline{\mathcal{M}}_{\beta}] \cap (a_1 \otimes a_2 \otimes b ) \in H_0(Y^3;\mathbb{Z}) \cong \mathbb{Z}
\end{equation}
for cohomology classes $a_1, a_2, b \in H^*(Y)$ of degree $|a_1| + |a_2| + |b| = \mathrm{vdim}_{\mathbb{R}} \overline{\mathcal{M}}_{\beta}$ are then used to define the structure constants of the small quantum product as
\begin{equation}
    b \star_q  a_1 = \sum_{\beta, \omega(\beta) \ge 0} \sum_{a_2} \langle a_1, a_2, b \rangle_{0, 3, \beta} \ q^\beta \ a_2^\vee
\end{equation}
where the sum $\sum_{a_2}$ ranges over a chosen basis $\{a_2\}$ of $H^*(Y)$. Here $a_2^\vee$ is the dual cohomology class under the unimodular Poincar\'e pairing $(\cdot, \cdot) : H^*(Y) \otimes H^*(Y) \to \mathbb{Z}$, inducing the dualizing map $a_2^\vee \in H^*(Y)^\vee \cong H^*(Y)$. In other words, the operation $b \star_q -$ is most naturally defined as a map $H^*(Y) \to H^*(Y)^\vee$, and we post-compose with the linear isomorphism $H^*(Y)^\vee \cong H^*(Y)$ induced by the Poincar\'e pairing to consider the quantum product as an endomorphism of $H^*(Y)$.

In the case of non-compact target $Y$, we modify this using the compactly supported cohomology $H^*(Y)^\vee \cong H^*_c(Y)$, and the (purely quantum part of) an operation defined by Gromov--Witten theory is naturally considered as a map $H^*(Y) \to H^*_c(Y)$.

\subsubsection{Quantum connection of local $\mathbb{P}^1$}\label{ssec:qconn-localP1}
In light of the discussion above, we consider both the cohomology and the compactly supported cohomology of local $\mathbb{P}^1$, $Y = \mathrm{Tot} \left( \mathcal{O}(-1) \oplus \mathcal{O}(-1) \to \mathbb{P}^1 \right).$

The cohomology of $Y$ is rank $2$; indeed, $H^*(Y) \cong H^*(\mathbb{P}^1)$ by the deformation retract to the zero section. It is generated by two classes $1 =[Y]\in H^0(Y)$ and $e = [\mathrm{fiber}] \in H^2(Y)$, Poincar\'e dual to the fundamental class (codimension $0$) and the fiber of $Y \to \mathbb{P}^1$ (codimension $2$) respectively.

The compactly supported cohomology of $Y$ is again rank $2$; by integration along fibers, $H^{*-4}_c(Y) \cong H^*(\mathbb{P}^1)$. It is generated by two classes $L = [\mathbb{P}^1] \in H^4_c(Y)$ and $[\mathrm{pt}] \in H^6_c(Y)$, Poincar\'e dual to the cycle of the zero section (codimension $4$) and a point in $Y$ (codimension $6$) respectively.

The degrees of $J$-holomorphic spheres in $Y$ are indexed by $H_2(Y;\mathbb{Z}) \cong H_2(\mathbb{P}^1;\mathbb{Z}) \cong \mathbb{Z}$, and the Novikov ring has the form    $\Lambda_{\mathbb{K}} = \mathbb{K}[\![q]\!].$

The expected dimension of the corresponding moduli space of genus $0$, $3$-pointed $J$-holomorphic spheres of  degree $\beta$ is
\begin{equation}
    \mathrm{vdim}_{\mathbb{R}} \overline{\mathcal{M}}_{0, 3} (Y,J,\beta) = \dim_{\mathbb{R}} Y + 2 c_1(\beta) = 6
\end{equation}
as $c_1  = 0$. Hence, Gromov--Witten invariants are only non-trivial for three insertions given by $H^2$-classes. The computation of the corresponding quantum product is due to Voisin:

\begin{thm}[\cite{Voi96}]\label{thm:localP1-qconn}
    Let $Y$ be the local $\mathbb{P}^1$ Calabi--Yau $3$-fold and choose the basis $\langle 1, e, L, [\mathrm{pt}] \rangle \in H^*(Y) \oplus H^*_c(Y)$. The matrix of the quantum product by $e \in H^2(Y)$, the Poincar\'e dual of the fiber class, is given in this basis by
    \begin{equation}
        e  \star_q - = \begin{pmatrix} 0 & 0 & 0 & 0 \\ 1 & 0 & 0 & 0 \\ 0 & \frac{q}{1-q} & 0 & 0 \\ 0 & 0& 1& 0 \end{pmatrix} : H^*(Y; \Lambda_{\mathbb{K}}) \oplus H^*_c(Y; \Lambda_{\mathbb{K}}) \to H^*(Y; \Lambda_{\mathbb{K}}) \oplus H^*_c(Y; \Lambda_{\mathbb{K}}).
    \end{equation}
\end{thm}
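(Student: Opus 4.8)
The plan is to reduce the statement to three inputs: a degree count that forces the shape of the matrix, two classical intersection numbers on $Y$, and Voisin's multiple-cover evaluation for the resolved conifold. To begin, $\mathbb{M}_b = b\star_q(-)$ is a $\Lambda$-linear operator on the graded module $H^*(Y)\oplus H^*_c(Y)$ of degree $+2$, and this module is concentrated in degrees $0,2,4,6$ with one-dimensional graded pieces spanned by $1,b,C,P$ respectively. Hence there are scalars $c_1,c_2,c_3\in\Lambda$ with $\mathbb{M}_b(1)=c_1 b$, $\mathbb{M}_b(b)=c_2 C$, $\mathbb{M}_b(C)=c_3 P$, and $\mathbb{M}_b(P)=0$ for degree reasons; this already produces the claimed subdiagonal shape, so it remains to identify $c_1,c_2,c_3$.

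Next I would compute $c_1$ and $c_3$ from the classical part together with the fundamental class (unit) axiom. Unwinding the definition of $\star_q$ and the Poincar\'e--Lefschetz pairing, $c_1=\sum_A\langle 1,C,b\rangle_{0,3,A}\,q^A$ and $c_3=\sum_A\langle C,1,b\rangle_{0,3,A}\,q^A$. In both sums the terms with $A\neq 0$ vanish by the fundamental class axiom (one of the insertions is the unit $1=[Y]$), and the $A=0$ term is the classical triple product $\int_Y 1\cup C\cup b=\int_Y C\cup b$, which equals $1$ because a fiber $F$ of $Y\to\mathbb{P}^1$ meets the zero section transversely in a single point. Thus $c_1=c_3=1$, as is also forced by the unit axiom $\mathbb{M}_b(1)=b$ and by the classical module action $b\cup[\mathbb{P}^1]=[\mathrm{pt}]$.

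The substantive coefficient is $c_2=\sum_A\langle b,b,b\rangle_{0,3,A}\,q^A$. The $A=0$ term is $\int_Y b^3$, which vanishes since $b\cup b\in H^4(Y)\cong H^4(\mathbb{P}^1)=0$. For a curve class of degree $d\geq 1$, every nonconstant $J$-holomorphic sphere in $Y$ lies in the zero section (maximum principle), so $\langle b,b,b\rangle_{0,3,d}$ is exactly the degree-$d$ multiple-cover invariant of the resolved conifold with three divisor insertions; it is an integer, so the identity we want holds verbatim after reduction mod $p$. Since $b$ is a divisor class with $b\cdot A=d$ for $A$ of degree $d$, the divisor axiom gives $\langle b,b,b\rangle_{0,3,d}=d^3\,\langle\,\rangle_{0,0,d}$, where $\langle\,\rangle_{0,0,d}=\int_{\overline{\mathcal{M}}_{0,0}(\mathbb{P}^1,d)} e(R^1\pi_* f^*\mathcal{O}(-1)^{\oplus 2})$; the Aspinwall--Morrison formula, proved rigorously by Voisin \cite{Voi96}, evaluates this as $1/d^3$. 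Hence $\langle b,b,b\rangle_{0,3,d}=1$ for all $d\geq 1$, so $c_2=\sum_{d\geq 1}q^d=\frac{q}{1-q}$, and the matrix is as stated.

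The only real work is the last step, the multiple-cover evaluation $\langle\,\rangle_{0,0,d}=1/d^3$; I would quote \cite{Voi96} (equivalently, the now-standard virtual localization computation for the resolved conifold) rather than reproduce it, all other steps being immediate degree bookkeeping and elementary intersection theory.
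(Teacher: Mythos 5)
Your proposal is correct and follows essentially the same route as the paper: the degree argument forces the subdiagonal shape, classical intersection theory gives the entries $1$, and Voisin's multiple-cover computation (equivalently Aspinwall--Morrison plus the divisor axiom) gives $\sum_{d\ge 1} q^d = \frac{q}{1-q}$. Your extra remark that the fundamental class axiom kills quantum corrections in the $(2,1)$ and $(4,3)$ entries is a small point the paper leaves implicit, but it is the same argument in substance.
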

\begin{proof}
    The quantum product by $e \in H^2(Y)$ is an operation which raises cohomological degree by $|e|=2$, where the degree is well-defined as a $\mathbb{Z}$-grading as $c_1(Y) = 0$. As our basis elements are chosen to be in degrees $0, 2, 4, 6$, the corresponding matrix can only have non-zero entries on the leading subdiagonal. The classical terms ($q=0$) follow from the fact that (i) the fiber and the whole space intersects along the fiber, and (ii) the fiber and the zero section intersects at a single point. The quantum terms follow from the fundamental computation of Voisin \cite[Theorem 1.1]{Voi96} that $\langle e, e, e\rangle_{0, 3, \beta} = 1$ for all $\beta = k [\mathbb{P}^1]$, $k \ge 1$, leading to the sum 
    \begin{equation}
    \sum_{k \ge 1} \langle e, e, e \rangle_{0, 3, k[\mathbb{P}^1]} \   q^k = q + q^2 +  q^3 + \cdots = \frac{q}{1-q}.
    \end{equation}
    Note that this computation is equivalent to the Aspinwall--Morrison formula $\langle  \rangle_{k [\mathbb{P}^1]} = \frac{1}{k^3}$ by the divisor axiom. 
\end{proof}

\begin{cor}\label[cor]{cor:localP1-qconn-nilp}
    The matrix $\mathbb{M}_e := \mathbb{M}= e \  \star_q $ of the quantum product by the fiber class is nilpotent, $\mathbb{M}^4 = 0$. 
More generally, any word of length $\ge 4$ in the matrix $\mathbb{M}$ and its derivatives $\mathbb{M}^{(j)}:= (q\partial_q)^jB$ must be zero.
\end{cor}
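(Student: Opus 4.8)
The plan is to read the two assertions straight off the explicit matrix in \cref{thm:localP1-qconn}. The basis $\langle 1, b, C, P\rangle$ of $H^*(Y)\oplus H^*_c(Y)$ is ordered by cohomological degree $0,2,4,6$, and since quantum multiplication by $b$ raises degree by $2$ (the grading being honest because $c_1(Y)=0$), the matrix $B = b\star_q$ is supported on the first subdiagonal: its only possibly nonzero entries are $B_{21}=1$, $B_{32}=\frac{q}{1-q}$, $B_{43}=1$. This was already observed in the proof of \cref{thm:localP1-qconn}.

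Next I would analyze the logarithmic derivatives $B^{(j)}=(q\partial_q)^jB$. The operator $q\partial_q$ kills the constant entries $B_{21}$ and $B_{43}$, so for every $j\ge 1$ the matrix $B^{(j)}$ has a single possibly nonzero entry, namely $B^{(j)}_{32}=(q\partial_q)^j\!\bigl(\frac{q}{1-q}\bigr)$. Hence every matrix in the family $\{B^{(j)}\}_{j\ge 0}$ --- including $B$ itself --- is supported on the first subdiagonal, i.e. has $(i,k)$ entry zero unless $i=k+1$.

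The key step is then the elementary multiplicative fact: a product $M_1M_2\cdots M_m$ of $n\times n$ matrices, each supported on the first subdiagonal, is supported on the $m$-th subdiagonal, so its $(i,k)$ entry vanishes unless $i=k+m$. For $n=4$ there is no such entry once $m\ge 4$, hence any product of at least four of the $B^{(j)}$ is the zero matrix; in particular $B^4=0$. I do not expect a genuine obstacle here: the content is pure linear algebra, and the only point requiring a little care is the bookkeeping in the previous paragraph confirming that \emph{all} of the derivatives $B^{(j)}$ remain on the first subdiagonal, which is immediate from the shape of $B$.
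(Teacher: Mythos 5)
Your proof is correct and is essentially the paper's argument in explicit matrix form: the paper disposes of the corollary by noting that $B$ (and hence each $B^{(j)}$) is an endomorphism of cohomological degree $2$ on a space whose degrees are bounded between $0$ and $6$, which is exactly your observation that all these matrices are supported on the first subdiagonal of a $4\times 4$ matrix, so any product of four of them vanishes.
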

The above corollary is already immediate by noting that $B$ is an endomorphism of cohomological degree $2$ acting on a vector space with bounded degrees.

\section{The A-model Fontaine--Laffaille modules}
The goal of this section is to provide background on logarithmic Fontaine--Laffaile modules and use genus $0$ Gopakumar--Vafa integrality to prove Theorem \ref{thm:integrality}.

\subsection{Logarithmic Fontaine--Laffaille modules}\label{subsection:FL definitions}
 In this section, we give the precise definition of logarithmic Fontaine-Laffaille module which we need. Fix a ground ring $\mathbb{K}$. In the discussion below, we freely use notation from \S \ref{ssec:qhCY3}.
 

 In coordinates, the natural log structure on $\operatorname{Spf}(\Lambda_{\mathbb{K}})$ corresponds to the natural divisorial log structure along $\lbrace q_r=0 \rbrace \subset \operatorname{Spf}(\Lambda_{\mathbb{K}})$. We consider the module of $q_r$-adically continuous log differentials $\hat{\Omega}_{\Lambda_\mathbb{K}}$. There is a natural isomorphism: 
\begin{align} \label{eq:logarithmicforms} \Lambda_\mathbb{K} \otimes \bar{H}_2(X,\mathbb{Z}) \cong \hat{\Omega}_{\Lambda_\mathbb{K}} , \quad \beta \mapsto d\operatorname{log}(q^{\beta})  \end{align}

In this formal setting, an integrable logarithmic connection on a free finite rank $\Lambda_\mathbb{K}$-module $\calH$ will be a map \begin{align} \nabla: \calH \to \calH \otimes_{\Lambda_\mathbb{K}}\hat{\Omega}_{\Lambda_\mathbb{K}}  \end{align} which satisfies the usual Leibnitz rule and integrability condition $\nabla \circ \nabla=0.$ In terms of the coordinates $q_1,\cdots,q_r$ and a choice of basis for $\calH$, a logarithmic connection is encoded in a matrix valued one-form: 
\begin{align} \label{eq:connectioncoordinates} \sum_k M_k(q_1,\cdots,q_r)\frac{dq_k}{q_k},\quad M_k \in \operatorname{Mat}_{s\times s} (\Lambda_{\mathbb{K}}),\end{align}
where $s$ is the rank of $\calH$ as a module over $\Lambda_{\mathbb{K}}.$
Suppose $\calH$ carries a finite descending filtration: \begin{align}\label{eq:filtration} \calH=\operatorname{Fil}^w \supset \cdots \operatorname{Fil}^{i-1} \supset \operatorname{Fil}^i \supset \operatorname{Fil}^{i+1}  \cdots \end{align} 
\begin{defn} We say that $(\calH,\operatorname{Fil}^i,\nabla)$  satisfies Griffiths transversality if: \begin{align}\nabla(\operatorname{Fil}^i) \subset \operatorname{Fil}^{i-1}.\end{align}\end{defn}
Now fix a prime $p>2$ and set $\mathbb{K}=\mathbb{Z}_p$. The ``standard" lift of Frobenius \begin{align} \mathbb{Z}_p[H_2(X,\mathbb{Z})_{\geq 0}] \to \mathbb{Z}_p[H_2(X,\mathbb{Z})_{\geq 0}], \quad q^{\beta} \to q^{p\beta} \end{align} passes to completions and induces a map \begin{align} \label{eq:baseFmap} F: \Lambda_{\mathbb{Z}_p} \to \Lambda_{\mathbb{Z}_p}, \quad q^{\beta} \to q^{p\beta}. \end{align} 
The pullback module $F^*\calH$ is given by: \begin{align} F^*\calH := \calH \otimes_{\Lambda_{\mathbb{Z}_p}} \Lambda_{\mathbb{Z}_p}, \end{align} where $\Lambda_{\mathbb{Z}_p}$ acts on itself via \eqref{eq:baseFmap}. Let $j: \calH \to F^*\calH$ be the natural inclusion. The pull-back connection $F^*\nabla$ on $F^*\calH$ is the connection determined by the diagram:

\begin{center}
\begin{tikzcd}[row sep=large, column sep=huge]
\calH \arrow[r, "\nabla"] \arrow[dr, "F^*\nabla \circ j "'] & \calH \otimes \hat{\Omega}_{\Lambda_{\mathbb{Z}_p}} \arrow[d, "j \otimes dF"] \\
& F^*\calH \otimes \hat{\Omega}_{\Lambda_{\mathbb{Z}_p}},
\end{tikzcd}
\end{center} 
In terms of the matrix presentation \eqref{eq:connectioncoordinates}, the pull-back connection $F^*\nabla$ corresponds to:  
\begin{align} \label{eq:pullbackcoordinates} \sum_k pM_k(q_1^p,\cdots,q_r^p)\frac{dq_k}{q_k},\quad M_k \in \operatorname{Mat}_{s\times s} (\Lambda_{\mathbb{Z}_p}).\end{align}

\begin{defn} We say that a filtered module $\calH$ is filtered free if $\calH$, each $\operatorname{Fil}^i$, and each quotient $\calH/\operatorname{Fil}^i$ are all finite free modules.    \end{defn}
  The following Definition comes from \cite{Faltings1989} (see \cite[\S 2]{logFLpaper} for a detailed exposition of the log case):

\begin{defn}\label{defn:fontaine} A logarithmic Fontaine-Laffaille module is a quadruple $(\calH, \nabla, \mathrm{Fil}, \Phi)$ where

\begin{enumerate}
    \item \( (\calH, \mathrm{Fil}) \) is a filtered free $\Lambda_{\mathbb{Z}_p}$-module of finite rank equipped with a logarithmic connection \( \nabla \) satisfying Griffiths transversality.
     \item The (relative) Frobenius is an $\Lambda_{\mathbb{Z}_p}$-linear morphism 
    \[
    \Phi : F^{*} \calH \longrightarrow \calH \] which is horizontal with respect to the
    connections \( F^{*}\nabla \) on \( F^{*} \calH \) and \( \nabla \) on \( \calH \). Namely, the following diagram is commutative:
    $$
\begin{tikzcd}[row sep=large, column sep=large]
F^*\calH \arrow[r, "\Phi"] \arrow[d, "F^*\nabla"'] & \calH \arrow[d, "\nabla"] \\
F^*\calH \otimes \hat{\Omega}_{\Lambda_{\mathbb{Z}_p}} \arrow[r, "\Phi \otimes \mathrm{Id}"'] & \calH \otimes \hat{\Omega}_{\Lambda_{\mathbb{Z}_p}}
\end{tikzcd}
$$ 
    
    \item This data must satisfy the strong \( p \)-divisible property:
    \[
    \Phi(F^{*} \mathrm{Fil}^i) \subset p^i \calH, \quad \sum_{i=0}^{w} \frac{1}{p^i} \Phi(F^{*} \mathrm{Fil}^i) = \calH.
    \]
   
\end{enumerate} \end{defn} 

In coordinates, the condition that $\Phi$ is horizontal becomes the following system of $p$-adic differential equations: for any $1 \leq k \leq r$, we have
\begin{align} \label{eq:differentialequations} q_k\partial_{q_k}(\Phi(q_1, \cdots, q_r)) + M_k(q_1, \cdots, q_r)\Phi(q_1, \cdots, q_r)-p\Phi(q_1, \cdots, q_r)M_k(q_1^p, \cdots, q_r^p)=0. \end{align}

We use this ``elementary" interpretation to make an observation that will be used in \S \ref{sect:FLonQH}.  Let's consider a larger coefficient ring  \begin{align} \label{eq:largerring} L= \mathbb{Q}_p(q_1,\cdots,q_{r-1})[[q_r]]  \end{align} 
and look for solutions $\Phi(q)$ to \eqref{eq:differentialequations} with coefficients in $L$. The leading order term of such a solution will be its reduction modulo $q_r$: \begin{align} \label{eq:initialterm} \Phi_0 = \Phi(q)_{|{q_r}=0} \in \operatorname{Mat}_{s\times s}(\mathbb{Q}_p(q_1,\cdots,q_{r-1})). \end{align}
\begin{lemma} \label{lem:initialtermdeterminesFrob} Fix a matrix $\Phi_0 \in \operatorname{Mat}_{s\times s}(\mathbb{Q}_p(q_1,\cdots,q_{r-1})).$ There is at most one solution $\Phi(q)$ to  \eqref{eq:differentialequations} with coefficients in $L$ whose leading order term is $\Phi_0.$ \end{lemma}
\begin{proof} The argument from \cite[Lemma 3.4(i)]{bai2025p} shows that the solution $\Phi(q) \in \operatorname{Mat}_{s\times s}(L)$ to the differential equation \eqref{eq:differentialequations} with $k=r$ and initial term \eqref{eq:initialterm} is unique if it exists. Thus, the solution to the entire system is unique if it exists. (The standing conventions in that paper consider power series solutions in a different $p$-adic field from $\mathbb{Q}_p(q_1,\cdots,q_{r-1})$, but this does not impact the argument). \end{proof}

\subsection{Logarithmic $p$-torsion Fontaine--Laffaille modules}\label{ssec:ptorsion-FL}
We will also make use of the following $p$-torsion version of a Fontaine-Laffaille structure, which now lives over the base $\operatorname{Spf}(\Lambda_{\mathbb{F}_p}).$

\begin{defn}\label{defn:ptorsion_fontaine} A $p$-torsion (logarithmic) Fontaine-Laffaille module will be a tuple $$(\bar{\mathcal{H}}, \bar{\nabla}, \bar{\mathrm{Fil}}^i, \lbrace \phi_i\rbrace)$$ where
\begin{enumerate}
    \item $(\bar{\mathcal{H}}, \bar{\nabla}, \bar{\mathrm{Fil}}^i)$ is again a filtered free module of finite rank, now over $\Lambda_{\mathbb{F}_p}$, equipped with a logarithmic connection $\bar{\nabla}$ satisfying Griffiths transversality.
 \item $\phi_i : F^*\bar{\mathrm{Fil}}^i \to \bar{\mathcal{H}}$ is a $\Lambda_{\mathbb{F}_p}$-linear map such that
 \begin{align}\label{eq:vanishingcondition} \phi_i|_{F^*\bar{\mathrm{Fil}}^{i+1}} = 0. \end{align}
   The  $\phi_i$ maps are parallel, which in this context means the following diagram commutes:

\begin{center}
\begin{tikzcd}[row sep=large, column sep=huge]
\bar{\operatorname{Fil}}^{i} \arrow[r, "\phi_i"] \arrow[d, "\nabla_i"'] & \bar{\mathcal{H}} \arrow[d, "\nabla"] \\
\bar{\operatorname{Fil}}^{i-1} \otimes_{\Lambda} \widehat{\Omega}_{\Lambda_{\mathbb{F}_p}} \arrow[r, "\phi_{i-1} \otimes \frac{dF}{p}"'] & \bar{\mathcal{H}} \otimes_{\Lambda} \widehat{\Omega}_{\Lambda_{\mathbb{F}_p}}
\end{tikzcd}
\end{center}.
    \item The strong $p$-divisibility property becomes
    \begin{equation}\label{eqn:assemble}
    \sum_{i} \phi_i(F^*\bar{\operatorname{Fil}}^i) = \bar{\mathcal{H}}.
    \end{equation}
\end{enumerate}
\end{defn}

The conditions \eqref{eq:vanishingcondition} and \eqref{eqn:assemble} implies that the maps $\phi_i$ assemble to give an isomorphism: \begin{align} \label{eq:inversecartiergeneral} \mathcal{C}^{-1}: \bigoplus_i \frac{F^*\bar{\operatorname{Fil}}^i}{F^*\bar{\operatorname{Fil}}^{i+1}} \cong \bar{\mathcal{H}}. \end{align}

Given a Fontaine-Laffaille structure in the sense of Definition \ref{defn:fontaine}, its reduction $\bar{\calH}=\calH/p\calH$, naturally carries the structure of a $p$-torsion Fontaine-Laffaille module: \begin{itemize} \item The filtration and connection $\bar{\operatorname{Fil}}^i$ and $\bar{\nabla}$ are the reductions modulo $p$ of $\operatorname{Fil}^i$ and $\nabla.$ \vskip 5 pt \item The maps $\phi_i$ are given by the divided Frobenius operators: \begin{align} \phi_i = \frac{1}{p^i} \Phi_{|\operatorname{Fil}^i}: \bar{\operatorname{Fil}}^i \to \bar{\calH}, \end{align}
where $\frac{1}{p^i} \Phi_{|\operatorname{Fil}^i}$ is understood modulo $p.$ \end{itemize}

\subsection{The Fontaine--Laffaille structure on $QH^*$}\label{sect:FLonQH}
In this section, we construct a logarithmic Fontaine-Laffaille structure on the quantum connection. The construction is inspired by calculations in \cite{KSV06} (in particular the discussion around \cite[Lemma 3]{KSV06}). To begin, we let $\Gamma_p(z)$ denote the  Morita $p$-adic Gamma function. The continuous function $\Gamma_p(z): \mathbb{Z}_p \to \mathbb{Z}_p^\times$ is determined by
\begin{equation}
\Gamma_p(z+1) = 
\begin{cases}
 -z \cdot\Gamma_p(z) & \text{if } z \in \mathbb{Z}_p^{\times}, \\
-\Gamma_p(z) & \text{if } z \in p\mathbb{Z}_p.
\end{cases}
\end{equation}
We refer to \cite[Chapter 11]{cohen-vol2} for more background on $\Gamma_p(z)$. We use the $p$-adic $\Gamma$ function to define a characteristic class:
\begin{equation}
\Gamma_p(E) \in H^{\mathrm{even}}(X;\mathbb{Q}_p)[u,u^{-1}]
\end{equation}
for any complex vector bundle $E \rightarrow X$. In terms of Chern roots $c(E) = 1 + c_1(E) + c_2(E) + \cdots = \prod_{i=1}^n (1+r_i)$, 
\begin{equation} \label{eq:gamma-characteristic-class}
\begin{aligned}
\Gamma_p(E) = \prod_{i=1}^n \Gamma_p(u^{-1}\cdot r_i) &
= 1 + u^{-1}\cdot \Gamma_p'(0)c_1(E) + u^{-2}\cdot \Gamma_p'(0)^2 (c_1(E)^2/2) + 
\\[-.5em] 
& 
+ u^{-3}\cdot \Gamma_p'''(0) \big(c_3(E)/2 - c_1(E)c_2(E)/2 + c_1(E)^3/6 \big) \\ 
& 
+ u^{-3}\cdot \Gamma_p'(0)^3 \big(-\!c_3(E)/2 + c_1(E)c_2(E)/2 \big) + \cdots
\end{aligned}
\end{equation}
It follows immediately from this expression that if $E=TX$ for $X$ a Calabi-Yau 3-fold, then $\Gamma_p(TX)$ reduces to \eqref{eq:Gammap}.

Below, we use the notation from \cref{ssec:qhCY3}. Recall that in the basis $e_1,\cdots, e_r \in H^2(X;\mathbb{Z})$, there are Yukawa couplings \begin{align} Y_{ijk}(q)=\sum_{\beta,\omega(\beta)\geq 0} \langle e_i,e_j,e_k\rangle _{0,3,\beta}\ q^{\beta}.  \end{align}




We also have the genus zero Gromov-Witten potential $G(q)$ given by: \begin{align} \label{eq:GWpotential} G(q)= \sum_{\beta \neq 0} \langle \rangle_\beta \ q^{\beta}, \end{align} where $\langle \rangle_\beta$ denote zero-pointed genus zero Gromov-Witten invariants. 

\begin{defn} Define the $p$-adically integral Gromov-Witten potential $\eta(q)$ to be \begin{align} \label{eq:removepdivisors} \eta(q)= \frac{1}{p^3}G(q^p)-G(q) \in \Lambda_{\mathbb{Q}}, \end{align}  \end{defn}

 The name ``$p$-adically integral Gromov-Witten potential" is justified by the following, which is a consequence of Ionel-Parker's GV integrality theorem \cref{thm:gv-integrality}:

\begin{lemma} \label{lem:Kontsevichlemma}
Under the embedding $\Lambda_\mathbb{Q} \subset \Lambda_{\mathbb{Q}_p}$, the function $\eta(q)$ lies in $\Lambda_{\mathbb{Z}_p}.$
\end{lemma}
\begin{proof} Let $\beta \in H_2(X;\mathbb{Z})$ be a class with $\omega(\beta)>0$ and consider \begin{align} \mathrm{Li}_3(q^{\beta})=\sum_{d \geq 1} \frac{q^{d\beta}}{d^3} \in \Lambda_{\mathbb{Q}_p}. \end{align} Then \begin{align} \label{eq:nop} \mathrm{Li}_3(q^{\beta})-\frac{1}{p^3}\mathrm{Li}_3(q^{p\beta}) = \sum_{d \geq 1} \frac{q^{d\beta}}{d^3}- \sum_{d \geq 1} \frac{q^{pd\beta}}{p^3d^3} =\sum_{p \nmid d} \frac{q^{d\beta}}{d^3}. \end{align} It follows that \begin{align} \frac{1}{p^3}\mathrm{Li}_3(q^{p\beta})-\mathrm{Li}_3(q^{\beta}) \in \Lambda_{\mathbb{Z}_p}. \end{align}

By the Ionel--Parker GV integrality theorem, we can write $G(q)$ as a sum:
\begin{align} G(q) = \sum_{\beta \neq 0} n_\beta \mathrm{Li}_3(q^\beta).  \end{align}
where $n_\beta \in \mathbb{Z}$ are the instanton numbers. Consequently, \begin{align} \eta(q)= \sum_{\beta  \neq 0} n_\beta \left(\frac{1}{p^3}\mathrm{Li}_3(q^{p\beta})-\mathrm{Li}_3(q^{\beta})\right) \end{align} and the result follows.  
\end{proof}
\begin{rem} In view of \eqref{eq:nop}, the function $\eta(q)$ should be thought of as the function obtained by removing the contribution of multiple cover curves whose covering degree $d$ is divisible by $p$ from $G(q)$.  \end{rem}

We will use $\eta(q)$ to construct a Frobenius intertwiner $\Phi$ whose matrix coefficients lie in $\Lambda_{\mathbb{Z}_p}$. Recall from Lemma \ref{lem:yukawa-as-third-derivative} that
\begin{align} \delta_i \delta_j \delta_k G(q)=Y_{ijk}(q)-Y_{ijk}(0), \end{align} where $\delta_i=q_i\partial_{q_{i}}$ and similarly for $\delta_j,\delta_k$. By construction, the function \eqref{eq:removepdivisors} satisfies: \begin{align} \label{eq:characterizing equation} \delta_i \delta_j \delta_k \eta(q)=Y_{ijk}(q^p)-Y_{ijk}(q). \end{align}
For any $\zeta \in \mathbb{Q}_p$, define \begin{align} \eta_\zeta(q)=\zeta+\eta(q). \end{align}

Let us denote the degree $2k$ piece of $QH^{*}(X,\Lambda_{\mathbb{Z}_p})[u^\pm]$, by $\calH^{2k}.$ We first construct the Frobenius intertwiner on the degree $0$ piece, $\calH^0.$

\begin{lemma} \label{lem:integralityusingKSV} 

Fix $\zeta \in \mathbb{Q}_p$ with p-adic valuation $\operatorname{val}_p(\zeta)>-3.$ Fix as well the preferred homogeneous basis for $\calH^0$ given by $$1 \in H^0(X), u^{-1} e_i \in H^2(X), u^{-2}L_i \in H^4(X), u^{-3}[\mathrm{pt}] \in H^6(X).$$  Define a map $\Phi$ on this basis by the formula: 
\begin{align}
\Phi (1) &= p^3 1 +p^3 u^{-2} \sum_j\delta_j(\eta_\zeta (q)) L_j -2p^3 u^{-3} \eta_\zeta (q) [\mathrm{pt}], \\
\Phi (u^{-1}e_i) &= p^2 (u^{-1}e_i) + p^2 u^{-2}  \sum_j \delta_{i}\delta_{j}(\eta_\zeta(q))L_j -p^2 u^{-3}\delta_i(\eta_\zeta (q)) [\mathrm{pt}],\nonumber \\
\Phi (u^{-2}L_i) &= p \cdot u^{-2}L_i, \\
\Phi (u^{-3}[\mathrm{pt}])&=u^{-3}[\mathrm{pt}]. \nonumber  
\end{align}
The above map defines a horizontal Frobenius intertwiner: 
\begin{align}\label{eq:PhiH0} \Phi :  F^*\calH^0 \to \calH^0.\end{align} \end{lemma} 
\begin{proof} We will use the shorthand $M_k=u^{-1}\mathbb{M}_{e_k}$, where $\mathbb{M}_{e_k}$ are the matrices for the quantum product from \eqref{eq:matricesquantumproduct}. We wish to check that this $\Phi$ satisfies the matrix differential equations \eqref{eq:differentialequations} for being a Frobenius intertwiner. Due to the form of the connection and intertwiner, these equations are especially simple to verify when the inputs are $u^{-3}[pt]$, $u^{-2}L_i$. We therefore consider two cases: \vskip 5 pt
\emph{ Input is $1$:} We have $\delta_k \Phi (1) =  p^3u^{-2} \sum_j\delta_k\delta_j(\eta_\zeta(q)) L_j - 2p^3 u^{-3} \delta_k(\eta_\zeta(q)) [\mathrm{pt}]$, and   
$$M_k(q)\Phi(1)= p^3u^{-1} e_k+p^3 u^{-3}\delta_k(\eta_{\zeta}(q))[\mathrm{pt}],$$ 
$$ p\Phi(q) M_k(q^p)(1)= p^3 u^{-1}e_k+ p^3 u^{-2}\sum_j \delta_{k} \delta_j(\eta_{\zeta}(q))L_j - p^3 u^{-3}\delta_k(\eta_{\zeta}(q)) [\mathrm{pt}]. $$

 \vskip 5 pt
\emph{Input is $u^{-1}e_i$:} 
We have $\delta_k \Phi (u^{-1}e_i)= p^2 u^{-2} \sum_j \delta_{i} \delta_j \delta_{k}(\eta_{\zeta}(q))L_j -p^2 u^{-3}\delta_{i} \delta_k (\eta_{\zeta}(q)) [\mathrm{pt}]$ 

$$M_k(q)\Phi(u^{-1}e_i)= p^2 u^{-2}\sum_j Y_{ijk}(q)L_j +p^2 u^{-3}\delta_{i} \delta_{k}(\eta_{\zeta}(q))[\mathrm{pt}]$$

$$p\Phi(q)\circ M_k(q^p)(u^{-1}e_i)= p^2u^{-2}\sum_j Y_{ijk}(q^p)L_j.$$
\end{proof}

\begin{proof}[Proof of Theorem \ref{thm:integrality}]
     For $\Phi$ as in Lemma \ref{lem:integralityusingKSV}, if we set $q=0$, we see that its reduction modulo the ideal $\Lambda_{+}$, $\Phi_0 $, satisfies
    \begin{equation}
        \begin{aligned}
            \Phi_0(1) &= p^31 -2p^3u^{-3}\zeta[pt], \\
            \Phi_0(u^{-1}[e_i]) &= p^2(u^{-1}[e_i]), \\
            \Phi_0(u^{-2}[L_i]) &= pu^{-2}[L_i], \\
            \Phi_0(u^{-3}[pt]) &= u^{-3}[pt].
        \end{aligned}
    \end{equation}
    We set $\zeta = \frac12 \zeta_p(3)\chi(X)$. By Lemma \ref{lem:pintegrality}, this satisfies the necessary condition on $\operatorname{val}_p(\zeta)$. Note that multiplication by $u^k$ gives identifications: \begin{align}\calH^{0} \cong \calH^{2k},\quad F^*\calH^{0} \cong F^*\calH^{2k}.
    \end{align} 
   We extend $\Phi$ to $\calH^{2k}$ by requiring that \begin{align} \Phi(u^k\cdot x)=p^ku^k \cdot \Phi(x),\quad x \in \calH^{0}. \end{align}
   This is again a horizontal Frobenius (on each piece $\calH^{2k}$ being just a rescaling of the Frobenius on $\calH^{0}$) and now has leading order term  \eqref{eq:b-frobenius}. By construction, it also satisfies: \begin{align} \Phi(F^*\calH^{2k}) \subset p^k \calH^{2k} \end{align} as required. Lastly, uniqueness is guaranteed by Lemma \ref{lem:initialtermdeterminesFrob}.
\end{proof}

\begin{proof}[Proof of Corollary \ref{cor:FLclassical}] To begin, note that we have assumed that $H^{even}(X;\mathbb{Z}_p)$ contains no torsion, which implies that $\mathcal{H}^0$ is filtered-free as a module over $\Lambda_{\mathbb{Z}_p}.$ The constant $\zeta = \frac12 \zeta_p(3)\chi(X)$ is $p$-adically integral for $p>3$ by Lemma \ref{lem:pintegrality}. This implies that $\Phi$ in \eqref{eq:PhiH0} satisfies the strong divisibility property as required.  \end{proof}


We conclude this section with two remarks. 

\begin{rem} For certain monotone symplectic manifolds (e.g. $\mathbb{C}P^n$), the Frobenius intertwiner constructed using the $p$-adic $\Gamma$ class in \cite{bai2025p} is the unique overconvergent Frobenius intertwiner for the quantum connection. By contrast, in the Calabi-Yau case, integrality considerations alone do not seem to complete pin-down the Frobenius intertwiner;  the value  $\zeta = \frac12 \zeta_p(3)\chi(X)$ above could be replaced by any $p$-adic integer. (In fact, this value leads to the assumption $p>3$ in Corollary \ref{cor:FLclassical}.) However, as noted in the introduction, this value appears to be the correct one from the point of view of mirror symmetry and non-commutative geometry.  \end{rem}

\begin{rem} As in \cite[Lemma 3.4]{bai2025p}, we can express the Frobenius intertwiner in  Theorem \ref{thm:integrality} in terms of Givental's fundamental solution $\Pi(q)$ for the small quantum connection (\cite{cox-katz}): 
\begin{equation}\label{eq:frobenius_formula}
\Phi(q) = \Pi(q)\Phi_0\Pi^{-1}(q^p),
\end{equation}
 To see this, note that the right-hand side of this expression gives a solution to \eqref{eq:differentialequations} with coefficients in  the ring $L$ from \eqref{eq:largerring} with leading order term $\Phi_0.$ By Lemma \ref{lem:initialtermdeterminesFrob}, we must have equality in \eqref{eq:frobenius_formula} \end{rem}

\section{Structures on the mod $p$ quantum connection}

In this section, we discuss the structures carried by the \emph{mod $p$ reduction} of the loop-equivariant quantum cohomology. Unlike their $p$-adic counterparts, these objects are now modules over characteristic $p$ fields, and the datum of the horizontal Frobenius intertwiner on the $p$-adic quantum connection induces the new structure of \emph{inverse Cartier maps} on the A-model $p$-torsion Fontaine--Laffaille module.

The mod $p$ quantum connection naturally induces the associated $p$-curvature, and we state the analogue of Katz' observation \cite{Kat70} that the $p$-curvature of the quantum connection is related, up to conjugating by the inverse Cartier, to the Frobenius twist of the quantum products. 

On the other hand, the $p$-curvature of the quantum connection has a geometric incarnation in terms of quantum Steenrod operations, cf. Theorem \ref{thm:main}. We provide background on quantum Steenrod operations and defer the proofs of the main theorems to the next Section.

\subsection{The mod $p$ quantum connection and $p$-curvature}

For $\mathbb{K} = \mathbb{F}_p$, the quantum connection can be extended to the $\mu_p$-equivariant quantum cohomology. Here we consider the subgroup $\mu_p \subseteq \mathbb{G}_m$, the group of complex $p$-th roots of unity, in our constructions.

\begin{lemma}\label[lemma]{lem:group-cohomology-mup}
    The group cohomology of $\mu_p$ with $\mathbb{F}_p$-coefficients
    \begin{equation}
        H^*(B\mu_p ; \mathbb{F}_p) \cong \mathbb{F}_p [u]\langle \theta \rangle
    \end{equation}
    is a graded polynomial algebra on two formal variables with degrees $|u|=2$ and $|\theta|=1$. If $p =2$, these generators are subject to the relation $u = \theta^2$, and if $p > 2$, $u\theta = \theta u $ and $\theta^2 = 0$.
\end{lemma}

Indeed, the map induced on equivariant cohomology by the inclusion $\mu_p \subseteq \mathbb{G}_m$ induced the inclusion
\begin{equation}
    H^*(B \mathbb{G}_m; \mathbb{F}_p) \cong \mathbb{F}_p [u] \to H^*(B\mu_p;\mathbb{F}_p) \cong \mathbb{F}_p[u]\langle \theta \rangle
\end{equation}
where the generator $u$ in $H^2(B\mathbb{G}_m;\mathbb{F}_p)$ is sent to the corresponding degree $2$ generator $u$ in $H^2(B\mu_p;\mathbb{F}_p)$.

\begin{defn}
    The $\mu_p$-equivariant quantum cohomology is the graded $\Lambda$-module defined as
    \begin{equation}
        QH^*(X;\Lambda_{\mathbb{F}_p})[u]\langle \theta \rangle  := H^*(X;\Lambda_{\mathbb{F}_p})[u]\langle \theta \rangle
    \end{equation}
    where $|u| = 2$ and $|\theta|=1$. We also consider the version where $u$ is inverted, \begin{align} 
QH^*(X;\Lambda_{\mathbb{F}_p})[u^\pm]\langle \theta \rangle:=H^{even}(X; \Lambda_{\mathbb{F}_p})[u,u^{-1}]\langle \theta \rangle . 
\end{align}
\end{defn}
Here, we again remark that there is no $\mu_p$-action on the target symplectic manifold $X$; the action of $\mu_p$ should be considered as a \emph{discretized loop rotation} on the source curve $C$ for $J$-holomorphic maps $u: C \to X$.

In characteristic $p$, the $p$-th iterate $\partial_b^p$ of a derivation $\partial_b$ is again a derivation. In our case, it acts on $q^\beta \in \Lambda$ the same way as $\partial_b$. It is natural to ask whether or not the association $\partial_b \mapsto \nabla_b$ preserves such $p$-th power operations.

\begin{defn}
    The \emph{$p$-curvature} of the quantum connection is the endomorphism 
    $$\psi_b: QH^*(X;\Lambda_{\mathbb{F}_p})[u^\pm ]\langle \theta \rangle \to  QH^*(X;\Lambda_{\mathbb{F}_p})[u^\pm ] \langle \theta \rangle$$ defined by
    \begin{equation}
        \psi_b := \nabla_b^p - \nabla_b.
    \end{equation}
    where the right hand side is extended $\theta$-linearly.
\end{defn}
A remarkable fact is that the $p$-curvature is a \emph{$\Lambda$-linear} endomorphism of $QH^*(X;\Lambda_{\mathbb{F}_p})[u^\pm ]\langle \theta \rangle$, even though the individual $\nabla_b$ is certainly not. The general proof of linearity can be found in \cite[Section 5]{Kat70}. 

Note also that
\begin{equation}
    u^p \psi_b  = (u \nabla_b)^p - u^{p-1} (u\nabla_b) : QH^*(X;\Lambda_{\mathbb{F}_p})[u ]\langle \theta \rangle \to QH^*(X;\Lambda_{\mathbb{F}_p})[u ]\langle \theta \rangle
\end{equation}
may be defined without inverting $u$; it is an operation of degree $p|u| = 2p$.

\subsection{Cartier isomorphism and Katz' formula}

Fix $p>3$. As described in \cref{ssec:ptorsion-FL}, the Fontaine--Laffaile structure on the degree zero piece $\calH^0$ of $QH^*(X;\Lambda_{\mathbb{Z}_p})[u^\pm]$ prescribed by the Frobenius intertwiner
\begin{equation}
    \Phi: F^*\calH^0 \to \calH^0
\end{equation}
now induces a $p$-torsion Fontaine--Laffaille structure on the mod $p$ reduction:
\begin{equation}\bar{\calH}^0:=\calH^0/p\calH^0.  \end{equation}


We may extend the inverse Cartier operator $\mathcal{C}^{-1}$ from \eqref{eq:inversecartiergeneral} to a map
\begin{equation}
    \mathcal{C}^{-1}: F^* QH^* (X;\Lambda_{\mathbb{F}_p})[u^\pm] \langle \theta \rangle \to QH^* (X;\Lambda_{\mathbb{F}_p})[u^\pm] \langle \theta \rangle
\end{equation}
by extending $\mathbb{F}_p[u^{\pm}]\langle \theta \rangle$-linearly.

\begin{lemma}\label{lem:Cartier-computation}
In the preferred homogeneous basis given by $$1 \in H^0(X), u^{-1}e_i \in H^2(X), u^{-2}L_i \in H^4(X), u^{-3}[\mathrm{pt}] \in H^6(X),$$ the inverse Cartier operator is given by
\begin{align}
\mathcal{C}^{-1} (1) &= 1 + u^{-2}  \sum_i\delta_i (\eta_{\zeta}(q))   L_i - 2 u^{-3} \eta_{\zeta}(q)  [\mathrm{pt}], \\
\mathcal{C}^{-1} (u^{-1}e_i) &=  u^{-1}e_i +  u^{-2} \sum_j \delta_{i}\delta_{j}(\eta_{\zeta}(q)) L_j  - u^{-3}\delta_i(\eta_{\zeta}(q)) [\mathrm{pt}],\nonumber \\
\mathcal{C}^{-1} (u^{-2}L_i) &= u^{-2}L_i, \\
\mathcal{C}^{-1} (u^{-3}[\mathrm{pt}])&=u^{-3}[\mathrm{pt}] \nonumber  .
\end{align}
\end{lemma}
\begin{proof}
    The computation follows from the definition and the computation of the Frobenius intertwiner, \cref{lem:integralityusingKSV}.
\end{proof}

Katz's celebrated resolution of the $p$-curvature conjecture for Gauss--Manin connections involves the identification of the $p$-curvature endomorphisms (more precisely, a suitable associated graded thereof) with the action of the cup product by the Kodaira--Spencer class conjugated by $\mathcal{C}^{-1}$. 

Using the well-known interpretation of the quantum cup product as the mirror of the Kodaira--Spencer class, one may formulate the following $A$-side analogue of Katz's formula:

\begin{thm}\label{thm:Cartierconjugation-body}
    Let
    \begin{equation}
        \psi_b : QH^{*}(X;\Lambda_{\mathbb{F}_p})[u^\pm]\langle \theta \rangle \to QH^{*}(X;\Lambda_{\mathbb{F}_p})[u^\pm]\langle \theta \rangle
    \end{equation}
    be the $p$-curvature operator corresponding to $b \in H^2(X;\mathbb{K})$. Then the following diagram commutes:

    \begin{center}
        \begin{tikzcd}
             F^*QH^{*}(X;\Lambda_{\mathbb{F}_p})[u^\pm]\langle \theta \rangle \arrow[rr, "-u^{-1} F^*\mathbb{M}_b"] \dar["\mathcal{C}^{-1}"] & & F^*QH^{*}(X;\Lambda_{\mathbb{F}_p})[u^\pm]\langle \theta \rangle\dar["\mathcal{C}^{-1}"] \\
             QH^{*}(X;\Lambda_{\mathbb{F}_p})[u^\pm]\langle \theta \rangle \arrow[rr, "\psi_b"] &&  QH^{*}(X;\Lambda_{\mathbb{F}_p})[u^\pm]\langle \theta \rangle
        \end{tikzcd}.
    \end{center}
\end{thm}

The proof of this statement involves invoking the cluster decomposition of the $p$-curvature, which is introduced in the next section; we postpone the proof to \cref{ssec:katz-formula-proof}.

\subsection{Mod $p$-pseudocycles}\label{subsec:p-pseudocycles}
The $p$-curvature of the mod $p$ quantum connection can be given a geometric interpretation in terms of quantum power operations, as we will show in the next section. The definition of quantum power operations on mod $p$ quantum cohomology involves intersection theory of $p$-pseudocycles, which geometrically represent mod $p$ homology classes. The purpose of this subsection is to recall the notion of $p$-pseudocycles, defined in \cite[Appendix A]{Wil-sur}. We closely follow the exposition in \cite[Appendix A]{bai-pomerleano-xu}, and refer the reader there for details.

These $p$-pseudocycles are variants of the more familiar notion of (integral) pseudocycles, which geometrically represent classes in $H^*(X;\mathbb{Z})$ for a smooth oriented manifold $X$. The $p$-pseudocycles allows one to similarly represent classes in $H^*(X;\mathbb{F}_p)$ which do not necessarily arise from $H^*(X;\mathbb{Z}) \otimes \mathbb{F}_p$. 
Let $X$ be a smooth manifold.
\begin{defn}
    A smooth map $f: W \to X$ from a $k$-dimensional oriented manifold with boundary $W$ is called an (oriented $k$-dimensional) \emph{$p$-pseudocycle} if it satisfies the following properties.
    \begin{enumerate}
        \item $f(W)$ is precompact in $X$.
        \item The dimension of the $\Omega$-set of $f$,
        \begin{equation}
            \Omega_f := \bigcap_{K \subseteq W \mathrm{compact}} \ov{f(W\setminus K)},
        \end{equation}
        is at most $k-2$.
        
        \item There is a smooth map $g: V \to X$ and an oriented $p$-fold covering $\partial W \to V$ such that $f|_{\partial W}$ is the pullback of $g$ by the covering.
    \end{enumerate}
\end{defn}
Properties (1) and (2) define the usual notion of an (integral) pseudocycle when $\partial W = 0$. Property (3) allows an existence of codimension 1 boundary as long as it is a $p$-fold cover, and it allows $p$-cycles to represent classes in $H^*(X;\mathbb{F}_p)$ with non-vanishing Bockstein (i.e. those chains that are cycles in $\mathbb{F}_p$ because their boundaries are $p$-fold multiples). It is considered vacuous if $\partial W = 0$ (as an oriented manifold).

\begin{defn}
    Two (oriented $k$-dimensional) $p$-pseudocycles $f_0: W_0 \to X$ and $f_1 : W_1 \to X$ are \emph{$(p-)$cobordant} if there exists a smooth map $\hat{f}: \hat{W} \to X$ from a $(k+1)$-dimensional oriented manifold with boundary $\hat{W}$ such that
    \begin{enumerate}
        \item $\hat{f}(\hat{W})$ is precompact in $X$.
        \item $\dim \Omega_{\hat{f}} \le k-1$.
        \item There is an oriented diffeomorphism $\partial \hat{W} \cong -\mathrm{Int} W_0 \sqcup \mathrm{Int} W_1 \sqcup W_h$ such that $\hat{f}|_{\mathrm{Int} W_i} = f_i |_{\mathrm{Int} W_i}$ and $\hat{f}|_{W_h}$ is a pullback of some map $g : V \to X$ by an oriented $p$-fold covering $W_h \to V$.
    \end{enumerate}
\end{defn}
The set of bordism classes of $p$-pseudocycles are denoted by $\mathfrak{H}_k^{(p)}(X)$. Using disjoint union as addition and orientation reversal as inverses, $\mathfrak{H}_*^{(p)}(X) = \bigoplus_k \mathfrak{H}_k^{(p)}(X)$ forms a graded abelian group. It is a $\mathbb{F}_p$-module since $p$ times any $p$-pseudocycle is cobordant to the empty set by the trivial cylinder. 

If we denote $\mathfrak{H}_*(X)$ the abelian group of (integral) pseudocycle bordisms, there is an abelian group map $\mathfrak{H}_*(X) \to \mathfrak{H}_*^{(p)}(X)$ by considering a pseudocycle as a $p$-pseudocycle.

Later we use $p$-pseudocycles to implement mod $p$ Gromov--Witten invariants with incidence constraints, hence we need intersection theory of $p$-pseudocycles:
 
\begin{prop}
    There is a well-defined intersection pairing
    \begin{equation}
        \cap : \mathfrak{H}_*^{(p)}(X) \otimes \mathfrak{H}_*^{(p)}(X) \to \mathbb{F}_p.
    \end{equation}
\end{prop}
The intersection pairing is the (signed) count of intersection points when $p$-pseudocycles of complementary dimensions meet transversally. The count is only well-defined modulo $p$ on bordism classes due to codimension $1$ boundary that are $p$-fold covers. For details, we refer the reader to \cite[Appendix A]{bai-pomerleano-xu}.

\begin{prop}[{\cite[Theorem A.3]{bai-pomerleano-xu}}]
    There is a natural map
    \begin{equation}
        \Psi_*^{(p)}: H_*(X;\mathbb{F}_p) \to \mathfrak{H}_*^{(p)}(X)
    \end{equation}
    satisfying the following conditions.
    \begin{enumerate}
        \item The following diagram commutes:
        \begin{center}
            \begin{tikzcd}
                H_*(X;\mathbb{Z}) \rar["\Psi_*"] \dar & \mathfrak{H}_*(X) \dar \\ H_*(X;\mathbb{F}_p) \rar["\Psi_*^{(p)}"] & \mathfrak{H}_*^{(p)}(X).
            \end{tikzcd}
        \end{center}
        \item If a homology class $A \in H_*(X;\mathbb{F}_p)$ is represented by a smooth cycle $f: W \to X$ that is a $p$-pseudocycle, then $\Psi_*^{(p)}(a)$ is represented by $f$.
        \item The map $\Psi_*^{(p)}$ intertwines the Poincar\'e intersection pairing on $H_*(X;\mathbb{F}_p)$ with the intersection pairing on $\mathfrak{H}_*^{(p)}(X)$.
    \end{enumerate}
\end{prop}

In particular, a $\mathbb{F}_p$-homology class naturally gives rise to a (bordism class of) $p$-pseudocycles. As before, we introduce the following notation:

\begin{defn}
    Let $X$ be a closed oriented manifold. Let $a \in H^*(X;\mathbb{F}_p)$, and denote its Poincar\'e dual homology class by $\mathrm{PD}[a] \in H_*(X;\mathbb{F}_p)$. Then
    \begin{equation}
        \Psi(a) := \Psi_*^{(p)}(\mathrm{PD}[a]) \in \mathfrak{H}_*^{(p)}(X)
    \end{equation}
    is the (bordism class of) a $p$-pseudocycle representing the cohomology class $a \in H^*(X;\mathbb{F}_p)$.
\end{defn}
By a slight abuse of notation, we may denote by $\Psi(a)$ some $p$-pseudocycle representing the bordism class $\Psi(a)$. Assuming that $H^*(X;\mathbb{Z})$ is $p$-torsion-free, then there is a natural diagram
\begin{center}
    
\begin{tikzcd}
    \bar{H}^*(X;\mathbb{Z}) \otimes \mathbb{F}_p \dar["\mathrm{PD} \otimes \mathbb{F}_p"]  \rar["\sim"] & H^*(X;\mathbb{F}_p) \dar["\mathrm{PD}"] \\ \bar{H}_*(X;\mathbb{Z}) \otimes \mathbb{F}_p \rar \dar & H_*(X;\mathbb{F}_p) \dar \\  \mathfrak{H}_*(X) \otimes \mathbb{F}_p \rar  & \mathfrak{H}_*^{(p)}(X)
\end{tikzcd}

\end{center}
so that the notation is consistent with the previous definition of $\Psi(a)$ for a class $a \in \bar{H}^*(X;\mathbb{Z})$.

\subsection{Quantum Steenrod operations}\label{subsec:qst}

In this subsection, we define the quantum Steenrod operations using $p$-pseudocycles to implement the incidence constraints. Due to technical difficulties with strata with $\mathbb{Z}/p$-isotropy in the moduli space of $J$-holomorphic maps, the moduli spaces are defined using inhomogeneous perturbations. Other expositions of the construction can be found in \cite{seidel-wilkins, Lee23a, Lee23b}.

\subsubsection{Model of equivariant cohomology}
Constructions of quantum Steenrod operations involve the rotational symmetry group $\mu_p$ of the source curve $\mathbb{P}^1$. The model of the classifying space $B\mu_p$ is as follows. We take $E\mu_p = S^\infty \subseteq \mathbb{C}^\infty$ as the unit sphere on which $\mu_p$ acts by simultaneous multiplication $\sigma$ of all complex coordinates.  We use the cell decomposition of $S^\infty$ \cite[Section 2a]{seidel-wilkins} with exactly $p$ many $i$-cells $\Delta^i, \sigma \Delta^i, \dots, \sigma^{p-1} \Delta^i$ for each dimension $i \ge 0$, forming a free $\mu_p$-orbit. The $i$-cell $\Delta^i$ has a natural compactification to $\ov{\Delta}_i$ as a manifold with stratified boundary, whose top boundary strata are given by 
\begin{align}
    \partial \ov{\Delta}_{i} = \begin{cases} \bigcup_{j=0}^{p-1} \sigma^j \Delta^{i-1} & i \mbox{ even} \\ \sigma \Delta_{i-1} - \Delta_{i-1} & i \mbox{ odd} \end{cases}.
\end{align}
The images of $\Delta^i$ under the quotient $E\mu_p \to B\mu_p = E\mu_p / \mu_p$ induces a CW-structure on $B\mu_p$ with unique $i$-cell for every $i \ge 0$. With $\mathbb{F}_p$-coefficients, the quotient images of $\ov{\Delta}^i$ become cycles and they additively generate $H_*(B\mu_p;\mathbb{F}_p)$.

The additive generators $\theta \in H^1(B\mu_p; \mathbb{F}_p)$ and $u \in H^2(B\mu_p;\mathbb{F}_p)$ from \cref{lem:group-cohomology-mup} correspond to those classes satisfying
\begin{equation}
    \langle \theta, \Delta^1 \rangle =1, \quad \langle u, \Delta^2 \rangle = -1.
\end{equation}

We denote the preferred additive generator of $H^*(B\mu_p;\mathbb{F}_p)$ in degree $i$ with the following notation:
\begin{equation}\label{eqn:eq-param}
    (u, \theta)^i = \begin{cases} u^{i/2} & i \mbox{ even } \\ u^{(i-1)/2}\theta & i \mbox{ odd } \end{cases}.
\end{equation}

\subsubsection{Parametrized moduli spaces}
For the target symplectic manifold $(X,\omega)$, fix a compatible almost complex structure $J$. An equivariant inhomogeneous term is a choice of a $J$-complex antilinear bundle homomorphism
\begin{equation}
    \nu^{eq} \in C^\infty ( E\mu_p  \times_{\mu_p} \mathbb{P}^1 \times X ; \Omega^{0,1}_{\mathbb{P}^1} \boxtimes TX).
\end{equation}
where $\mu_p$ acts on $\mathbb{P}^1$ by rotation action (denoted $\sigma_{\mathbb{P}^1}$). Equivalently, $\nu^{eq}$ can be considered as a collection of inhomogeneous terms $\nu^{eq}_w$ (in the usual sense) parametrized by $w \in E\mu_p$ satisfying the equivariance condition
\begin{equation}
    \nu_{ \sigma_{E} (  w), z, x} = \nu_{w, \sigma_{\mathbb{P}^1}(z), x} \circ D \sigma_z : T\mathbb{P}^1_z \to TX_x.
\end{equation}
Assume that $\nu^{eq}$ is supported away from the neighborhoods of distinguished marked points $z_0 = 0, z_j = \zeta^j, z_\infty = \infty$ on $\mathbb{P}^1$.

Denote by $\mathcal{M}^{eq}_\beta$ the moduli space of solutions to the parametrized problem
\begin{equation}
    \mathcal{M}^{eq}_\beta = \mathcal{M}_{0, p+2}^{eq}(X,J,\nu, \beta) := \left\{ (u:\mathbb{P}^1 \to X , w \in E\mu_p )  :  u_*[\mathbb{P}^1] =\beta, \ (\overline{\partial}_J u)_z = \nu^{eq}_{w, z, u(z)} \right\}.
\end{equation}

The moduli space admits the stable map compactification (fiberwise under the projection $\mathcal{M}^{eq}_A \to E\mu_p$), and carries an evaluation map
\begin{align}
    \mathrm{ev}^{eq}: \  &\ov{\mathcal{M}}^{eq}_\beta \to X \times X \times (E\mu_p \times X^p)  \\
    &(u,w) \mapsto (u(0) ; u(\infty);  (w, u(z_1), \dots, u(z_p)) )
\end{align}
which is $\mu_p$-equivariant with respect to the $\mu_p$-action is given by $(u,w) \mapsto (\sigma_E(w), u \circ \sigma_{\mathbb{P}^1})$ on $\mathcal{M}_\beta^{eq}$ and cyclic permutation on $E\mu_p \times X^p$. Provided the boundedness of the moduli space $\mathcal{M}^{eq}_\beta$, by Gromov compactness the evaluation map is covered by $E\mu_p$-parametrized versions of simple stable maps \cite[Chapter 6]{MS12}.

\subsubsection{Quantum Steenrod operators}\label{sssec:quantum-Steenrod-definition}
Fix $i \ge 0$ and denote by $\ov{\Delta}_i \subseteq E\mu_p$ the natural compactification of $\Delta_i$ as a manifold with boundary. By a slight abuse of notation, we also denote the inclusion of the quotient image of $\ov{\Delta}_i$ under $E\mu_p \to B \mu_p$ into $B\mu_p$ as $\ov{\Delta}_i \hookrightarrow B\mu_p$ (this is a closed manifold if $i$ is odd). Consider the moduli space obtained from the fiber product,
\begin{equation}
    \ov{\mathcal{M}}_\beta^{eq, i} := \begin{cases} \ov{\mathcal{M}}_\beta^{eq} \times_{E\mu_p} \ov{\Delta}^i & i \mbox{ even} \\ \ov{\mathcal{M}}_\beta^{eq}/\mu_p \times_{B\mu_p} \ov{\Delta}^i & i \mbox{ odd} \end{cases}
\end{equation}
which carries an evaluation map
\begin{align}
    \mathrm{ev}^{eq, i}: \  &\ov{\mathcal{M}}^{eq, i}_\beta \to X \times X \times X^p  \\
    &(u,w) \mapsto (u(0) ; u(\infty);  (u(z_1), \dots, u(z_p)) ).
\end{align}

\begin{lemma}\label[lemma]{lem:qst-pseudocycle}
    For a generic choice of $J$ and $\nu^{eq}$, the map $\mathrm{ev}^{eq, i}$ restricted to the open locus $\mathcal{M}^{eq, i}_\beta \to X \times X \times X^p$ defines a $p$-pseudocycle of dimension $i + \dim_{\mathbb{R}}X + 2c_1(\beta) = i + \dim_{\mathbb{R}} X $.
\end{lemma}
\begin{proof}
    For $i$ odd, the open locus fibers over the closed submanifold $\ov{\Delta}_i \subseteq B\mu_p$. The Omega-limit set of the evaluation map consists of the image of ($B\mu_p$-parametrized versions of) the simple stable maps, which is codimension at least $2$. Therefore, the evaluation map defines an integral pseudocycle. For $i$ even, $\ov{\Delta}_i$ is a manifold with boundary whose codimension $1$ boundary $\partial \ov{\Delta}_i = \mu_p \cdot \left( \bigcup_{k < i} \Delta^i \right)$ is given as a free $\mu_p$-orbit, and therefore admits a $p$-fold covering $\partial \ov{\Delta}_i \to \partial \ov{\Delta}_i /\mu_p$. The Omega-limit set of the evaluation map still consists of the image of ($E\mu_p$-parametrized versions of) simple stable maps, which is of codimension at least $2$.
\end{proof}

We denote the corresponding $p$-pseudocycle as $\mathrm{ev}_* [\ov{\mathcal{M}}^{eq, i}_\beta]$, which is independent of the choice of $J$ and $\nu^{eq}$ up to bordism by standard cobordism arguments.

\begin{defn}
    Let $a_1, a_2 , b\in H^*(X;\mathbb{F}_p)$. The  \emph{$i$-th quantum Steenrod correlator} is the number
    \begin{equation}
        \langle a_1, a_2, b^{\otimes p} \rangle_{\beta}^{eq, i} :=  \mathrm{ev}_*[\overline{\mathcal{M}}_\beta^{eq, i}] \cap \Psi(a_1 \otimes a_2 \otimes b^{\otimes p}) \in \mathbb{F}_p
    \end{equation}
    where $\cap$ denotes the intersection pairing of $p$-pseudocycles in $X \times X \times X^p$.
\end{defn}

The equivariant inhomogeneous term $\nu^{eq}$ can be ($C^\infty$-)generically chosen so that the intersection with the $p$-pseudocycle representatives of $\Psi(a_1 \otimes a_2 \otimes b^{\otimes p}) \in \mathfrak{H}_*^{(p)} (X \times X \times X^p)$ is transverse. Equivalently, the transversality requirement can be achieved by choosing generic $p$-pseudocycle representatives.

\begin{defn}
    Fix $b \in H^*(X;\mathbb{F}_p)$. The \emph{quantum Steenrod operator} $$Q\Sigma_b: QH^*(X;\Lambda_{\mathbb{F}_p})[u]\langle \theta \rangle \to QH^*(X;\Lambda_{\mathbb{F}_p})[u]\langle \theta \rangle$$ for $b$ is the $\Lambda[u]\langle \theta \rangle$-linear operator defined by the structure constants
    \begin{equation}
        (Q\Sigma_b (a_1), a_2) =  \sum_{i \ge 0} \sum_\beta (-1)^\dagger \langle a_1, a_2, b^{\otimes p} \rangle_\beta^{eq, i} q^\beta (u, \theta)^i \in \Lambda[u]\langle \theta \rangle
    \end{equation}
    for signs
    \begin{equation}
        \dagger = \begin{cases} |b||a_1| & i \mbox{ even} \\ |b||a_1| + |b| + |a_1| & i \mbox{ odd} \end{cases}
    \end{equation}
    and the Poincar\'e pairing $(\cdot, \cdot)$.
\end{defn}
\begin{rem}\label[rem]{rem:signs}
    The signs $(-1)^\dagger$ are largely invisible, since in this paper we will focus on equator insertions $b \in H^2(X)$ of even degree, and the series rarely has a nonzero term for $i$ odd, cf. \cref{rem:purely-quantum}.
\end{rem}

\begin{rem}\label[rem]{rem:CY-degree}

(Simplification in the Calabi--Yau setting). Fix a class $b \in H^2(X;\mathbb{F}_p)$. Note that
\begin{equation}
    Q\Sigma_b : QH^*(X;\Lambda_{\mathbb{F}_p})[u]\langle \theta \rangle \to QH^*(X;\Lambda_{\mathbb{F}_p})[u]\langle \theta \rangle
\end{equation}
is a degree $p|b| = 2p$ endomorphism of graded modules. Hence, the highest degree of the equivariant parameters $(u, \theta)^i$ that can appear in a structure constant $(Q\Sigma_b(a_1), a_2)\in \Lambda[u]\langle \theta \rangle $ is bounded above by
\begin{equation}
    p|b|+|a_1| + |a_2| - \dim_{\mathbb{R}}X - 2c_1(\beta) \le 2p + \dim_{\mathbb{R}} X.
\end{equation}
In particular, to define the quantum Steenrod operator $Q\Sigma_b$ it suffices to consider a finite-dimensional approximation of $E\mu_p = S^\infty$, for large $E\mu_p^{\mathrm{fin}} := S^{2N+1} \subseteq S^\infty$ and $\mu_p$-equivariant inhomogeneous term parametrized over $E\mu_p^{\mathrm{fin}}$. In particular, the inhomogeneous term $\nu^{eq}$ can be chosen to be parametrized over a \emph{compact} manifold.
\end{rem}

\section{$p$-curvature and power operations}

In this section, we prove for Calabi--Yau threefolds the equivalence between quantum Steenrod operaitons and the $p$-curvature. The two key steps involved are (i) the application of cluster formalism, which reduces the statement to the case of local $\mathbb{P}^1$ geometry, and (ii) a direct verification in the case of local $\mathbb{P}^1$. As a corollary, we can verify Katz' formula.

Unlike the results \cref{thm:integrality} and \cref{thm:Cartierconjugation}, \cref{thm:main} holds without the torsion-free assumption on $H^{*}(X;\mathbb{Z})$, and also holds if one defines quantum cohomology additively to be based on the whole cohomology, not only the even part:
$$QH^*(X;\Lambda_{\mathbb{F}_p}) [u^\pm] \langle \theta \rangle := H^*(X;\Lambda_{\mathbb{F}_p}) [u, u^{-1}]\langle \theta \rangle.$$ 

In particular, we present our proof in this chapter throughout using $p$-pseudocycles to geometrically represent classes in $H^*(X;\mathbb{F}_p)$ that generate the quantum cohomology.

\subsection{The local computation}\label{ssec:local-p1-computation}
Here, we revisit the computation in \cite[Section 4]{Lee23a} for the quantum Steenrod operation of $Y$ and verify that it is equivalent to our key local computation that the quantum Steenrod operation is equal to the $p$-curvature of the quantum connection. This is the local version of the main theorem in the case of the local $\mathbb{P}^1$; for our notations we refer back to \cref{ssec:local-p1-geometry}.

\begin{thm}[{\cite[Corollary 4.16]{Lee23a}}]\label{thm:localP1-qst-is-pcurv}
    Let $Y = \mathrm{Tot}(\mathcal{O}(-1) \oplus \mathcal{O}(-1) \to \mathbb{P}^1)$, and consider a divisor class $b \in H^2(Y;\mathbb{F}_p).$  Then
    \begin{equation}
        Q\Sigma_b = u^p \psi_b  \in \mathrm{End}(H^*(Y) \oplus H^*_c(Y)).
    \end{equation}
\end{thm}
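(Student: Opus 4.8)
The statement is really a comparison of two explicit $4\times 4$ matrices, so my plan is to compute both sides in the basis $\langle 1, b, C, P\rangle$ and check they agree. On the $p$-curvature side, I would use \cref{thm:localP1-qconn} for the matrix $B = b\star_q-$ and \cref{cor:localP1-qconn-nilp}, which tells me $B$ and its $(q\partial_q)$-derivatives satisfy that any word of length $\ge 4$ vanishes. Writing $\nabla_b = t\partial_b + B$ with $\partial_b = q\partial_q$ (since $b\cdot[\mathbb{P}^1]=1$), I would expand $\nabla_b^p - t^{p-1}\nabla_b$ using the standard combinatorial identity for iterated connection operators (expressing $\nabla_b^p$ as a sum of $t^{p-j}$ times differential-operator-weighted words in $B$ and its derivatives); by \cref{cor:localP1-qconn-nilp} all words of length $\ge 4$ die, so only finitely many terms survive and $\psi_b$ becomes an explicit polynomial in $q$, $t$ with matrix coefficients. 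The key structural point is that $\psi_b$ is $\Lambda$-linear (hence has no $\partial_b$-terms left), which is guaranteed by \cite[Section 5]{Kat70}, so the computation must collapse to a genuine matrix.

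On the quantum Steenrod side, I would simply invoke the computation in \cite[Section 4]{Lee23a}: the structure constants $\langle a_1, a_2, b^{\otimes p}\rangle_A^{eq,i}$ for the local $\mathbb{P}^1$ were computed there (using the description of the moduli space of spheres into $Y$ as spheres into $\mathbb{P}^1$ with the normal obstruction bundle $\mathcal{O}(-1)^{\oplus 2}$, and an equivariant Euler class computation over $E\mu_p\times_{\mu_p}\overline{\mathcal{M}}_{0,p+2}(\mathbb{P}^1,A)$), yielding an explicit matrix for $Q\Sigma_b$ as a function of $q$ and the equivariant parameters $t,\theta$. The content of the theorem is then the bookkeeping observation that this matrix coincides term-by-term with the $\psi_b$ matrix just computed. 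Since the paper cites this as \cite[Corollary 4.16]{Lee23a}, the honest work is to recall the normalization conventions in \emph{loc.\ cit.} (signs, the identification $(t,\theta)^i$, and the use of compactly supported cohomology $H^*_c(Y)$ as the target) and check they match the conventions fixed in Section 2 of the present paper.

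The main obstacle is the reconciliation of conventions rather than any new geometric input: \cite{Lee23a} may use a different model of $B\mu_p$, a different sign convention for the quantum Steenrod correlators, or a slightly different presentation of the quantum connection (e.g.\ $s\partial_s$ versus $t\partial_t$, or a shift in the equivariant grading), and one must verify that the translation between the two setups is an isomorphism of all the structures in sight. A secondary subtlety is that both operators are \emph{a priori} defined on the non-compact target $Y$, so one must be careful that the map $H^*(Y)\to H^*_c(Y)$ implicit in both constructions (via the non-unimodular Poincaré pairing) is the same on both sides; once this is pinned down, the equality $Q\Sigma_b = \psi_b$ is a finite check. I expect the $\psi_b$ computation itself to produce, up to the expected Artin--Schreier-type normalization, exactly the entries recorded in \cite[Section 4]{Lee23a}, so no discrepancy should arise.
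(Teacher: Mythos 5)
Your proposal is correct and follows essentially the same route as the paper: compute $\psi_b$ explicitly from Voisin's matrix $B$ using the nilpotency of words of length $\ge 4$ (\cref{cor:localP1-qconn-nilp}), quote \cite[Corollary 4.16]{Lee23a} for the $Q\Sigma_b$ matrix, and reconcile conventions (the paper indeed records a $-t$ versus $t$ sign discrepancy). The only differences are that the paper, rather than fully expanding $\nabla_b^p$ into words, computes only the leading coefficient $C_1 = B^{(p-1)}-B$ combinatorially and then pins down $C_2, C_3$ via covariant constancy $[\nabla_b,\psi_b]=0$ (a shortcut, not a necessity), and that your write-up implicitly assumes $b\cdot[\mathbb{P}^1]=1$, whereas the general divisor class $b = k[F]$ requires the extra (easy) observation that both $Q\Sigma_b$ and $\psi_b$ are Frobenius-linear in $b$.
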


We follow the notation from \cref{ssec:qconn-localP1}. Here, we consider the connection given by $\nabla_e =  q \partial_q + u^{-1} \mathbb{M}$ and its $p$-curvature
\begin{equation}
    \psi_e := \nabla_e^p - \nabla_b.
\end{equation}
Recall that $u$ is a degree $2$ variable and $\mathbb{M}$ is a degree $2$ operator so that $\nabla_b$ is a derivation of $H^*(Y;\Lambda)$ of degree $0$. Denote by $\mathbb{M}^{(j)} := (q\partial_q)^j \mathbb{M}$ the (logarithmic) derivatives of the matrix $\mathbb{M}$.

First we compute the leading term of the $p$-curvature, which is a rather general statement.
\begin{lemma}\label[lemma]{lem:pcurv-leading-term}
    The operator $u^p \psi_b$ is a linear endomorphism of degree $2p$, In its expansion as a polynomial in $u$, we have the leading term
    \begin{equation}
        u^p \psi_e = u^{p-1} (\mathbb{M}^{(p-1) } - \mathbb{M}) + \left( \mbox{terms of lower order in } u \right).
    \end{equation}
\end{lemma}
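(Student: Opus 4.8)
The plan is to compute $\psi_b = \nabla_b^p - t^{p-1}\nabla_b$ by expanding $\nabla_b^p = (tq\partial_q + B)^p$ and extracting the coefficient of $t^{p-1}$. Since $\nabla_b$ has degree $2$ and acts on a graded module with degrees concentrated in $\{0,2,4,6\}$ (over $\Lambda$), the operator $\nabla_b^p$ has degree $2p$, and $\psi_b$ is a well-defined degree $2p$ endomorphism; this gives the first sentence immediately. For the leading term I would expand $(tq\partial_q + B)^p$ as a sum over words in the two ``letters'' $tq\partial_q$ and $B$. A word with $k$ copies of the letter $B$ and $p-k$ copies of $tq\partial_q$ contributes, after moving all the $t$'s (which are central) to the front, a factor $t^{p-k}$ times an operator built from $q\partial_q$ and $B$; but each $q\partial_q$ that stands to the left of some $B$ in the word differentiates the $q$-dependence of $B$ (and of everything to its right), by the Leibniz rule $q\partial_q \circ B = B\circ q\partial_q + B^{(1)}$ where $B^{(1)} = (q\partial_q)B$. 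Iterating this rewriting, every word can be expressed as $t^{p-k}$ times a sum of products of derivatives $B^{(j_1)}\cdots B^{(j_k)}$ (with various $j_i \ge 0$) composed with some power of $q\partial_q$ acting last on the input. The terms with $k \le p-2$ contribute only to powers $t^{p-k}$ with $p-k \ge 2$ — wait, that is the wrong direction; rather, the terms of \emph{top} order in $t$ are those with the fewest $B$'s.

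So I would organize the extraction by the number $k$ of $B$-letters. The $k=0$ term is $t^p (q\partial_q)^p$, which has order $p$ in $t$; but this is cancelled against $t^{p-1}\nabla_b$? No — $t^{p-1}\nabla_b = t^p q\partial_q + t^{p-1}B$ exactly, so the $t^p(q\partial_q)^p$ term is \emph{not} cancelled by a $t^p$-term, it simply has order $p$. The point of the subtraction of $t^{p-1}\nabla_b$, and more fundamentally the content of the $\Lambda$-linearity of $\psi_b$, is that all the strictly-differential contributions — those whose action on a general Novikov element involves $\partial_q$ — must cancel, as guaranteed by \cite[Section 5]{Kat70}. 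Granting that $\psi_b$ is $\Lambda$-linear, I only need to track the $\Lambda$-linear (zero-th order differential) part of each word, i.e. the part where every $q\partial_q$ has ``landed on a $B$''. For $k=1$: the word $tq\partial_q\cdots tq\partial_q\, B\, tq\partial_q\cdots$ with $j$ of the $q\partial_q$'s to the left of $B$ contributes, in its $\Lambda$-linear part, $t^{p-1} B^{(j)}$ composed with $(q\partial_q)^{p-1-j}$; the purely $\Lambda$-linear piece is the one where all $p-1$ differentials act before landing, i.e. $j = p-1$, giving $t^{p-1}B^{(p-1)}$. Summing over the $p$ cyclic positions of the single $B$, careful bookkeeping of how many differentials sit to its left shows the net $\Lambda$-linear, order-$t^{p-1}$ contribution from all $k=1$ words is exactly $t^{p-1}B^{(p-1)}$ (the $\Lambda$-linear pieces with $j < p-1$ either cancel among themselves or are not of top order). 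Meanwhile $-t^{p-1}\nabla_b$ contributes $-t^{p-1}B$ to the order-$t^{p-1}$ part. All $k \ge 2$ words carry $t^{p-k}$ with $p-k \le p-2$, hence do not contribute at order $t^{p-1}$. Collecting: the coefficient of $t^{p-1}$ in $\psi_b$ is $B^{(p-1)} - B$, and everything else is lower order in $t$.

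The main obstacle is the combinatorial bookkeeping in the $k=1$ case: one must verify that when the single $B$ sits in position $\ell$ (so $\ell-1$ copies of $q\partial_q$ to its left), the $\Lambda$-linear part of that word, at top $t$-order, contributes $t^{p-1}B^{(\ell-1)}$ composed with the identity only when $\ell - 1 = p-1$, and that the $B^{(j)}$-terms for $0 \le j \le p-2$ produced along the way all appear with a trailing $(q\partial_q)^{p-1-j}$ (hence are \emph{not} $\Lambda$-linear and must be cancelled by higher-$k$ contributions via Katz's theorem, not by hand). The cleanest route is probably not to track cancellations at all, but to use a known closed-form expansion of $p$-curvature for a rank-one-type connection — e.g. the formula expressing $\psi_b$ in terms of iterated brackets/derivatives of $B$ — and simply read off its top-$t$-order term; since the claim only concerns the leading term, one does not need the full expansion, only that the unique way to produce $t^{p-1}$ from $(tq\partial_q+B)^p$ with no surviving $\partial_q$ is through a single $B$ preceded by $p-1$ differentiations, yielding $B^{(p-1)}$, minus the obvious $B$ from $t^{p-1}\nabla_b$.
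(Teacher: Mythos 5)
Your overall strategy is the same as the paper's: expand $(tq\partial_q+B)^p$ into words in the two letters, observe that only the single-$B$ words contribute at order $t^{p-1}$, and commute the differentials past $B$ via the Leibniz rule to produce the derivatives $B^{(j)}$. Where you diverge is at the end: the paper evaluates the order-$t^{p-1}$ coefficient completely, obtaining $\sum_{i=0}^{p-1}\binom{p}{i+1}B^{(i)}(q\partial_q)^{p-1-i}$ via Pascal's identity and then using $p\mid\binom{p}{i+1}$ for $i<p-1$ to see that every term with a surviving $(q\partial_q)$ vanishes identically, leaving $B^{(p-1)}$. You instead invoke Katz's $\Lambda$-linearity to discard the non-linear residue without computing it. That shortcut is legitimate once you note that $t$ is a central formal variable, so each coefficient $C_i$ in $\psi_b=\sum_i t^{p-i}C_i$ is separately $\Lambda$-linear; the order-$t^{p-1}$ coefficient is $\sum_{k=1}^p(q\partial_q)^{p-k}B(q\partial_q)^{k-1}-B$, its positive-differential-order part must therefore vanish, and its order-zero part is $B^{(p-1)}-B$. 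The paper's route is more self-contained (it never needs linearity for this step); yours is shorter but leans on the quoted theorem.

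Two of your intermediate assertions are wrong and should be fixed, though neither is fatal. First, the $t^p(q\partial_q)^p$ term \emph{is} cancelled: $t^{p-1}\nabla_b$ contributes $t^pq\partial_q$, and in characteristic $p$ one has $(q\partial_q)^p=q\partial_q$ on $\Lambda$ (it acts on $q^A$ by $(b\cdot A)^p=(b\cdot A)$ by Fermat; the paper records that $\partial_b^p$ is again the same derivation). If the $t^p$ term genuinely survived, the lemma's claim that the leading term sits at $t^{p-1}$ would be false. Second, the non-$\Lambda$-linear pieces $B^{(i)}(q\partial_q)^{p-1-i}$ with $i\le p-2$ cannot be ``cancelled by higher-$k$ contributions'': words with $k\ge2$ copies of $B$ carry $t^{p-k}$ with $p-k\le p-2$ and so live at strictly lower $t$-order. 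The cancellation happens entirely within the $k=1$ words at order $t^{p-1}$ (by linearity of $C_1$, or explicitly because $p\mid\binom{p}{i+1}$).
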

\begin{proof}
    The linearity of $p$-curvature (despite its expression in terms of $\nabla_e$ which is not linear in the Novikov variables) is standard, as can be found in e.g. \cite[Section 5]{Kat70}. From the definition $\psi_e:= \nabla_e^p - \nabla_e$ and the fact that $\nabla_e$ is of degree $0$, it is clear that the $u^p \psi_e $ is of degree $2p$, and hence can be expressed as a matrix-coefficient polynomial in $u$ of degree bounded by $p$:
    \begin{equation}
        u^p\psi_e = u^p C_0 + u^{p-1}C_1 + \left( \mbox{terms of lower order in } u \right).
    \end{equation}
    Again from definition, it is clear that $C_0 = 0$. It remains to show that $C_1 = \mathbb{M}^{(p-1)} - \mathbb{M}$. It suffices to show that the $u^{p-1}$-term in the expansion of $(u\nabla_b)^p = (u q\partial_q + \mathbb{M})^p$ is equal to $\mathbb{M}^{(p-1)}$. The $u^{p-1}$-term in the expansion is in turn given by
    \begin{equation}
       u^{p-1}  \sum_{k=1}^{p} (q\partial_q)^{p-k} \mathbb{M} (q\partial_q)^{k-1}.
    \end{equation}
    The Leibniz rule $[q\partial_q, \mathbb{M}^{(k)}] = \mathbb{M}^{(k+1)}$ implies that each term in the sum can be written as
    \begin{equation}
         (q\partial_q)^{p-k} \mathbb{M} (q\partial_q)^{k-1} = \left( \sum_{i=0}^{p-k} \binom{p-k}{i} \mathbb{M}^{(i)} (q\partial_q)^{p-k-i} \right)  (q\partial_q)^{k-1} = \sum_{i=0}^{p-k}\binom{p-k}{i} \mathbb{M}^{(i)} (q\partial_q)^{p-1-i}.
    \end{equation}
    Now consider
    \begin{align}
   C_1 +\mathbb{M}  &= \sum_{k=1}^{p} (q\partial_q)^{p-k} \mathbb{M} (q\partial_q)^{k-1} \\
        &= \sum_{k=1}^{p}   \sum_{i=0}^{p-k} \binom{p-k}{i} \mathbb{M}^{(i)} (q\partial_q)^{p-1-i} 
        \\
&= \sum_{i=0}^{p-1} \sum_{k=1}^{p-i}  \binom{p-k}{i} \mathbb{M}^{(i)} (q\partial_q)^{p-1-i} \\
&= \sum_{i=0}^{p-1} \binom{p}{i+1} \mathbb{M}^{(i)} (q\partial_q)^{p-1-i} \\
&= \mathbb{M}^{(p-1)}.
    \end{align}
    This is the desired result. The fourth equality follows from $\sum_{k=1}^{p-i} \binom{p-k}{i} = \binom{p}{i+1}$ (Pascal's triangle).
\end{proof}

We may now give the proof of \cref{thm:localP1-qst-is-pcurv}, which is a re-interpretation of \cite[Corollary 4.16]{Lee23a}.

\begin{proof}[Proof of \cref{thm:localP1-qst-is-pcurv}]
First consider the case where $e =[\mathrm{fiber}] \in H^2(Y)$ is the class Poincar\'e dual to the fiber of $Y \to \mathbb{P}^1$ (the generator of $H^2(Y;\mathbb{P}^1)$). We consider the quantum connection
\begin{equation}
    \nabla_e :=  q \partial_q + u^{-1} \mathbb{M} =  q\partial_q + u^{-1} \begin{pmatrix} 0 & 0 & 0 & 0 \\ 1 & 0 & 0 & 0 \\ 0 & \frac{q}{1-q} & 0 & 0 \\ 0 & 0& 1& 0 \end{pmatrix}
\end{equation}
in the direction of the fiber class $e \in H^2(Y)$, and the corresponding $p$-curvature
\begin{equation}
    \psi_b:= \nabla_b^p - \nabla_b.
\end{equation}
By \cref{lem:pcurv-leading-term}, the leading term of the $p$-curvature is given by
\begin{equation}
    u^p\psi_e = \sum_{k=1}^{p} u^{p-k} C_k = u^{p-1} \left( \mathbb{M}^{(p-1)} - \mathbb{M} \right) + u^{p-2} C_2 + u^{p-3} C_3 + \cdots.
\end{equation}
From \cref{cor:localP1-qconn-nilp}, we know that $C_j = 0$ for all $j \ge 4$. From covariant constancy $[\nabla_e, \psi_e] = 0$, it follows that
\begin{equation}
    (u q\partial_q ) u^p \psi_e = u^p C_1^{(1)} + u^{p-1} C_2^{(1)} + u^{p-2} C_3^{(1)}
\end{equation}
is equal to
\begin{equation}
    [u^p\psi_e, \mathbb{M}] = u^{p-1} [C_1, \mathbb{M}] + u^{p-2}[C_2, \mathbb{M}] + u^{p-3}[C_3, \mathbb{M}].
\end{equation}
This yields a system of equations which determine $C_2$ and $C_3$ by comparing the coefficients of the $u$-terms. The end result of the computation is that

\begin{equation}\label{eqn:localP1-pcurv-formula}
    u^p\psi_e = \begin{pmatrix}
        0 & 0 & 0 & 0 \\
        - u^{p-1} & 0 & 0 & 0 \\
        u^{p-2}\sum_{d=1}^\infty d^{p-2}q^d & -u^{p-1}\sum_{d=1}^\infty  q^{pd} & 0 & 0  \\ -u^{p-3} \sum_{d=1}^\infty 2 d^{p-3} q^d & - u^{p-2}\sum_{d=1}^\infty d^{p-2}q^d & -u^{p-1} & 0 
    \end{pmatrix}
\end{equation}
Here, to simplify $\mathbb{M}^{(p-1)}- \mathbb{M}$, we used the fact that $d^{p-1} \equiv 1$ mod $p$ for all $d \not \equiv 0$. Moreover, note that all entries are in given in terms of iterated logarithmic derivatives of the function $f(q) = \frac{q}{1-q}$. In particular, they are all rational functions.

To conclude the proof, we simply remark that this is exactly the same result as the computation of $Q\Sigma_e$ from \cite[Corollary 4.16]{Lee23a} (in \emph{loc. cit.} the convention uses $-u$ in place of our $u$, leading to the difference in signs for $u^{p-2}$ term). 

For more general classes of the form $b = k[F] \in H^2(Y;\mathbb{F}_p)$ for $k \in \mathbb{Z}$, the claim follows from the special case above because both operators $Q\Sigma_b$ and $\psi_b$ are Frobenius-linear in the entry $b$.
\end{proof}

\begin{rem}
    The relation between the explicit formal power series in $q$ obtained as structure constants of the quantum Steenrod operation with the series $\eta(q)$ in the proof is as follows:
    \begin{align*}
        \sum_{d=1}^{\infty} d^{p-2}q^d &\equiv \sum_{p \nmid d} \frac{q^d}{d} = \mathrm{Li}_1 (q) - \frac{1}{p} \mathrm{Li}_1(q^p), \\
        \sum_{d=1}^\infty d^{p-3}q^d &\equiv \sum_{p \nmid d} \frac{q^d}{d^2} = \mathrm{Li}_2 (q) - \frac{1}{p^2} \mathrm{Li}_2(q^p).
    \end{align*}
    
\end{rem}

\begin{rem}\label{rem:enumerative-interpretation-qst}
    The computation shows that there are terms in $Q\Sigma_b$ counting Gromov--Witten invariants in degree $p\cdot d[\mathbb{P}^1] \in H_2(Y;\mathbb{Z})$; they arise in the $u^{p-1}$-term of $Q\Sigma_b$ in the form $f(q^p) = \frac{q^p}{1-q^p}$. The above proof that $Q\Sigma_b = \psi_b$ ``explains'' the source of these terms, as they arise as the leading term in the $p$-curvature $\psi_b = u^{p-1} (\mathbb{M}^{(p-1)} - \mathbb{M}) + \cdots$. In particular, these curve counts are exactly identified with the \emph{$p$-fold multiple covers} of the underlying to the simple curve $\mathbb{P}^1 \to \mathbb{P}^1 \subseteq Y$ mapping to the zero section of $Y$. Hence, we arrive at the enumerative interpretation that, for Calabi--Yau 3-folds, the ($q^{pd}$-terms of) quantum Steenrod operations count the multiple covers of the embedded $J$-holomorphic curves. It could be interesting to compare this result with the FOP counts (cf. \cite{Bai-Xu-FOP1}) from the strata with $\mathbb{Z}/p$-isotropy.
\end{rem}

\subsection{Invoking cluster formalism}\label{ssec:clusters}

In this section, we prove \cref{thm:main} by invoking the cluster formalism of \cite{IP-GV} when $X$ is a Calabi--Yau 3-fold of real dimension $6$. The key steps can be summarized as follows.

\begin{itemize}
    \item Fix an energy bound $E$. There is a \emph{cluster decomposition} of the moduli space of genus $0$ $J$-holomorphic curves in $X$ into a finite collection of open and closed subsets.
    \item The structure constants of the quantum Steenrod and the $p$-curvature operators inherit a cluster decomposition. We address the technical details arising when working with inhomogeneous terms and incidence constraints for marked points on the genus $0$ curves.
    \item For \emph{elementary clusters}, the structure constants agree by the computation in the local $\mathbb{P}^1$-case.
    \item By the cluster isotopy theorem, adapted to our situation, the theorem is reduced to the case of elementary clusters \emph{up to} error in high energy. We conclude by inducting on $E$.
\end{itemize}

For the discussions prior the proof of Theorem \ref{thm:main}, we will be focused on the computation and structural properties of the enumerative invariants from \emph{effective} curves (the counts of non-constant $J$-holomorphic maps), that is the purely quantum parts of the operators. The classical terms, that is the contribution of constant maps, are considered separately in the proof of Theorem \ref{thm:main}.

\subsection{Cluster decomposition of the moduli space}
We first review the cluster formalism to describe the structure of the moduli space of genus $0$ $J$-holomorphic curves in $X$. 

\begin{defn} Fix a real number $E > 0$. The space of \emph{$E$-admissible almost complex structures} $\mathcal{J}(X)^E$ is defined to be the space (equipped with $C^\infty$-topology) of $\omega$-compatible almost complex structures on $X$ such that
\begin{enumerate}
    \item All genus simple genus $0$ $J$-holomorphic maps $u: \mathbb{P}^1 \to X$ with $E(u) := \omega[u] \leq E$ are embedded and transversely cut out.
    \item Consequently, either an embedded $J$-curve is isolated, or a multiple cover of it arises as the Gromov limit of a sequence of embedded curves with higher energy.
    \item For any two simple genus $0$ $J$-holomorphic maps $u_1, u_2: \mathbb{P}^1 \to X$ with $E(u_1), E(u_2) \leq E$ such that $u_1$ is not of the form $u_2 \circ \phi$ for some reparametrization $\phi \in \mathrm{Aut}(\mathbb{P}^1)$, their images satisfy $u_1(\mathbb{P}^1) \cap u_2(\mathbb{P}^1) = \emptyset$.
\end{enumerate}
\end{defn}
The space $\mathcal{J}(X)^E$ is dense in the space of all $\omega$-compatible almost complex structures $\mathcal{J} := \mathcal{J}(X,\omega)$, see \cite[Corollary 1.4]{IP-GV}.

Fix $J \in \mathcal{J}(X)^E$, and consider the moduli space of $J$-holomorphic maps from genus $0$ curves $\mathcal{M}(X):= \mathcal{M}_0(X, J)$, and its Gromov compactification $\ov{\mathcal{M}}(X)$. We can consider the ``underlying curve'' map
\begin{equation}
    c : \overline{\mathcal{M}}(X) \to \mathcal{K}
\end{equation}
where $\mathcal{K} := \mathcal{K}(X)$ is the space of all non-empty compact subsets of $X$. After fixing a background metric on $X$ and an induced distance function $d$, the set $\mathcal{K}(X)$ is given the Hausdorff metric topology defined by
\begin{equation}
    d_H(A, B) = \sup_{a \in A} \inf_{b \in B} d(a,b) + \sup_{b \in B} \inf_{a \in A} d(a, b), \quad A, B \in \mathcal{K}(X).
\end{equation}
The map $c$ is continuous, and proper when restricted to the space $\ov{\mathcal{M}}(X)^E$ of maps of energy $\le E$. The proofs can be found in \cite[Section 1]{IP-GV} or \cite[Theorem 2.11]{doan2021gopakumar}. Note that moduli space of maps from genus $0$ curves with marked points can also be equipped with the map $c$ by pre-composing it with the continuous forgetful maps $\ov{\mathcal{M}}_{0,n}(X, J) \to \ov{\mathcal{M}}_{0}(X,J)$. 

\begin{defn}
    The image under $c$ of embeddings in $\overline{\mathcal{M}}(X), \ov{\mathcal{M}}(X)^E$ are denoted $\mathcal{C}(X), \mathcal{C}(X)^E \subseteq \mathcal{K}(X)$, respectively. An element of $\mathcal{C}(X)$ is called an \emph{embedded $J$-holomorphic curve} in $X$. 
\end{defn}

Note that for $J \in \mathcal{J}(X)^E$, all simple maps are embedded, and the image of any map in $\ov{\mathcal{M}}(X, J)^E$ under $c$ factors through $\mathcal{C}(X)^E$.

Denote by $\mathcal{C}(X)^E_m$ the space of embedded $J$-holomorphic curves of genus $0$ with energy $\leq E$ such that its underlying homology class is $k$ times a primitive homology class, where $k \leq m$. Then we have a filtration
\begin{equation}
    \mathcal{C}(X)^E_1 \subset
    \mathcal{C}(X)^E_2 \subset \cdots \subset \mathcal{C}(X)^E_m \subset \cdots
\end{equation}
which terminates for sufficiently large $m$ by energy bound: $\mathcal{C}(X)^E = \mathcal{C}(X)^E_m$. Another observation is that the complement of an open neighborhood of $\mathcal{C}(X)^E_{m-1}$ in $\mathcal{C}(X)^E_m$ consists of only finitely many embedded $J$-holomorphic curves, see \cite[Lemma 1.6]{IP-GV}.

We recall the definition of clusters following \cite{IP-GV, doan2021gopakumar}. We focus on the case of genus $0$ $J$-holomorphic curves.
\begin{defn}\label[defn]{defn:clusters}
    For $E > 0$, an \emph{$E$-cluster} for $X$ is a triple $(\mathcal{U}, J, C)$ such that:
    \begin{enumerate}
        \item $J$ is an $\omega$-compatible almost complex structure and $C \subset X$ is a genus $0$ embedded $J$-holomorphic curve, where we identify $C$ with the image of a $J$-holomorphic map $u: S^2 \to X$.
        \item $\mathcal{U}$ is an open subset of the space of compact subsets $\mathcal{K}$ of $X$, with topology induced from the Hausdorff distance under a (equivalently, any) Riemannian metric.
        \item $C \in \mathcal{U}$, and it is the only embedded $J$-holomorphic genus $0$ curve in $\mathcal{U}$ of homology class $\beta = [C]$; moreover, any genus $0$ stable $J$-holomorphic map $\varphi: \Sigma \to X$ such that $\mathrm{im}(\varphi) \in \mathcal{U}$ satisfies $\varphi_*([\Sigma]) = k \cdot \beta$ for some $k \in \mathbb{Z}_{\geq 1}$. The curve $C$ is called the \emph{core} of the cluster.
        \item Any embedded genus $0$ $J$-holomorphic curve $C'  \in \partial \mathcal{U}$ must have energy $> E$.
    \end{enumerate}
\end{defn}

Primary examples of $E$-clusters come from the set of compact subsets contained in an $\epsilon$-tubular neighborhoods of embedded genus $0$ $J$-holomorphic curves $C$ for $J \in \mathcal{J}(X)^E$ for a generic $\epsilon$. They are used to decompose generating series of Gromov--Witten type invariants in to local pieces, in the sense of prescribing the images of the corresponding $J$-holomorphic maps. To be more precise, for instance, denote by $\ov{\mathcal{M}}_{0,k}(X;J)^E$ the moduli space of genus $0$ $J$-holomorphic stable maps with $k$ fixed marked points to $X$ with energy at most $E$. Then for $J \in \mathcal{J}(X)^E$, we have a continuous map
\begin{equation}
    \ov{\mathcal{M}}_{0,k}(X,J)^E \to \mathcal{C}(X)^E \to \mathcal{K}.
\end{equation}

Then we can cover the image of $\mathcal{C}(X)^E$ in $\mathcal{K}$ by finitely many $E$-clusters $\{ (\mathcal{U}_\ell, J, C_\ell) \}$, such that the preimage of $\mathcal{U}_\ell$ under the above map gives rise to an open and closed subset
\begin{equation}
    \ov{\mathcal{M}}_{0,k}(X,J)^E_{\mathcal{U}_\ell}
\end{equation}
of $\ov{\mathcal{M}}_{0,k}(X,J)^E$ under the Gromov topology. In \cite{IP-GV} and \cite{doan2021gopakumar}, one crucially uses the fact that the virtual fundamental classes of the moduli spaces of $J$-holomorphic maps satisfy the decomposition
\begin{equation}
    [\ov{\mathcal{M}}_{0,k}(X,J)^E]_{\mathrm{vir}} = \sum_\ell [\ov{\mathcal{M}}_{0,k}(X,J)^E_{\mathcal{U}_\ell}]_{\mathrm{vir}},
\end{equation}
giving rise to a corresponding decomposition of the generating series of Gromov--Witten invariants in terms of clusters. We summarize the discussion above as

\begin{prop}[{\cite[Lemma 1.6, Lemma 2.3, Proposition 2.4]{IP-GV}}]
    Given $E>0$ and $J \in \mathcal{J}(X)^E$, there exists a cluster decomposition
    \begin{equation}
        \mathcal{C}(X)^E \subseteq \bigcup_{\ell} \mathcal{U}_\ell
    \end{equation}
    into finitely many $E$-clusters $\{(\mathcal{U}_\ell, J, C_\ell)\}$ so that each $\mathcal{C}(X)^E_{\mathcal{U}_\ell} := \mathcal{C}(X)^E \cap \mathcal{U}_\ell$ is an open and closed subset of $\mathcal{C}(X)^E$. Moreover, the subsets $\mathcal{U}_\ell$ are disjoint for different $\ell$, and $\mathcal{C}(X)^E \cap \partial \mathcal{U}_\ell = \emptyset$ for all $\ell$.
\end{prop}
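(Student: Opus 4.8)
The statement is essentially a packaging of the Ionel--Parker structure theory, and the plan is to assemble it from the ingredients already recalled: the multiplicity filtration of $\mathcal{C}(X)^E$, the finiteness statement \cite[Lemma 1.6]{IP-GV}, the defining properties of $\mathcal{J}(X)^E$, and the refinements \cite[Lemma 2.3, Proposition 2.4]{IP-GV}. The first step is to extract finitely many \emph{cores}. Since the filtration $\mathcal{C}(X)^E_1 \subseteq \mathcal{C}(X)^E_2 \subseteq \cdots$ stabilizes at some $\mathcal{C}(X)^E_M = \mathcal{C}(X)^E$ by the energy bound, I would induct on $m$: at $m=1$, \cite[Lemma 1.6]{IP-GV} applied with empty previous stratum shows $\mathcal{C}(X)^E_1$ is finite; and having located finitely many embedded curves together with small open neighborhoods in $\mathcal{K}$ covering $\mathcal{C}(X)^E_{m-1}$, \cite[Lemma 1.6]{IP-GV} shows that only finitely many further embedded curves of $\mathcal{C}(X)^E_m$ lie outside those neighborhoods; adjoin these. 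Because $M < \infty$, this produces a finite list $C_1, \dots, C_L$ of embedded $J$-holomorphic spheres with the property that every curve of $\mathcal{C}(X)^E$ either occurs on the list or lies in an arbitrarily small Hausdorff-neighborhood of some $C_\ell$ of strictly smaller multiplicity.

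Next I would set up the local geometry. By property (3) in the definition of $\mathcal{J}(X)^E$ the $C_\ell$ have pairwise disjoint images, so I fix pairwise disjoint tubular neighborhoods $N_\ell \supseteq C_\ell$ in $X$. Since $C_\ell \cong S^2$, the neighborhood $N_\ell$ deformation retracts onto $C_\ell$, hence $H_2(N_\ell;\mathbb{Z}) = \mathbb{Z}\cdot[C_\ell]$; consequently any genus $0$ stable $J$-holomorphic map whose image lies in $N_\ell$ and is not a point represents $k\cdot[C_\ell]$ for some integer $k\ge 1$ (effectivity of the class and positivity of energy force $k\ge 1$). For the cluster neighborhood I would take $\mathcal{U}_\ell := \{K\in\mathcal{K} : d_H(K,C_\ell) < \epsilon_\ell\}$, which is open in $\mathcal{K}$, contains $C_\ell$, and for $\epsilon_\ell$ small is contained in $\{K : K\subseteq N_\ell\}$ and excludes every singleton (a singleton is Hausdorff-distance $\ge \tfrac12\operatorname{diam}(C_\ell)$ from $C_\ell$). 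This already yields axioms (1)--(3) of \cref{defn:clusters} with core $C_\ell$. Taking all $\epsilon_\ell$ smaller than half the minimal pairwise Hausdorff distance between the cores makes the $\mathcal{U}_\ell$ disjoint, and --- choosing the $\epsilon_\ell$ large enough to still capture the curves accumulating onto each $C_\ell$, as permitted by \cite[Lemma 2.3, Proposition 2.4]{IP-GV} --- the $\mathcal{U}_\ell$ cover $\mathcal{C}(X)^E$.

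It remains to arrange axiom (4) and the clopen statement, i.e.\ $\mathcal{C}(X)^E\cap\partial\mathcal{U}_\ell = \emptyset$. A curve $C'\in\mathcal{C}(X)^E$ lying in $N_\ell$ can only meet $\partial\mathcal{U}_\ell$ if $d_H(C',C_\ell) = \epsilon_\ell$. The key point is that $\mathcal{C}(X)^E$ is \emph{countable}: by property (1) of $\mathcal{J}(X)^E$ each moduli space of simple genus $0$ maps of fixed homology class is a regular, hence second-countable zero-dimensional, manifold, so countable, and only finitely many homology classes are represented with energy $\le E$ by Gromov compactness. Therefore the set $\{\, d_H(C',C_\ell) : C'\in\mathcal{C}(X)^E,\ 1\le\ell\le L\,\}$ is countable, and one chooses the finitely many radii $\epsilon_\ell$ inside their admissible ranges but outside this countable set. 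Then no embedded energy-$\le E$ curve lies on any $\partial\mathcal{U}_\ell$; combined with the covering property this gives axiom (4), and since $\mathcal{C}(X)^E\cap\mathcal{U}_\ell = \mathcal{C}(X)^E\cap\overline{\mathcal{U}_\ell}$ is simultaneously open (intersection with an open set) and closed, the subsets $\mathcal{C}(X)^E_{\mathcal{U}_\ell}$ are clopen in $\mathcal{C}(X)^E$. The uniqueness of $C_\ell$ as the embedded curve of class $[C_\ell]$ inside $\mathcal{U}_\ell$ follows from taking $N_\ell$ thin, using rigidity of $C_\ell$ in a thin tubular neighborhood (as in the local $\mathbb{P}^1$ model, where the zero section is the unique curve in its class).

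The main obstacle is the tension in the choice of the $\epsilon_\ell$ highlighted in the second and third paragraphs: each $\mathcal{U}_\ell$ must be \emph{thin} enough that the $H_2$-computation, the exclusion of singletons, and the within-class uniqueness all hold, yet \emph{thick} enough that every one of the (possibly infinitely many) higher-multiplicity embedded curves accumulating onto $C_\ell$ is captured in its interior rather than stranded on a cluster boundary. Reconciling these, and checking that curves belonging to distinct cores stay confined to the disjoint tubular neighborhoods $N_\ell$, is exactly the content of the Ionel--Parker estimates \cite[Lemma 2.3, Proposition 2.4]{IP-GV}; I would invoke these rather than reprove them, once the finiteness of cores from the first step has reduced the problem to a bounded collection of local pictures.
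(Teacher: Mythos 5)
Your proposal is correct and follows essentially the same route as the paper, which proves nothing beyond assembling the statement from the cited Ionel--Parker results: the multiplicity filtration and the finiteness statement of \cite[Lemma 1.6]{IP-GV} to extract finitely many cores, Hausdorff $\epsilon$-balls with generic radii so that no embedded curve of energy $\le E$ sits on a cluster boundary, and \cite[Lemma 2.3, Proposition 2.4]{IP-GV} for exactly the thin-versus-thick tension you rightly isolate as the technical core. One small correction: the within-class uniqueness of the core required by axiom (3) of \cref{defn:clusters} follows from the core being transversely cut out of index $0$ (property (1) of $\mathcal{J}(X)^E$) together with Gromov compactness — not from ``rigidity as in the local $\mathbb{P}^1$ model,'' since the normal bundle of a general core need not be $\mathcal{O}(-1)\oplus\mathcal{O}(-1)$; that identification is special to elementary clusters and plays no role here.
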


The clusters $\mathcal{U}_\ell$ can be chosen to be $\epsilon$-neighborhoods in the Hausdorff topology of $\mathcal{K}$ of the core curve $C_\ell$ for generic $\epsilon$ (\cite[Lemma 2.3]{IP-GV}, \cite[Proposition 3.17]{doan2021gopakumar}), and we will only use the clusters constructed in this way throughout.

\begin{cor}
    Given a finite cluster decomposition of the embedded $J$-holomorphic curves $\mathcal{C}(X)^E = \bigcup_{\ell} \mathcal{C}(X)^E_{\mathcal{U}_\ell}$, the preimages
    \begin{equation}
        \ov{\mathcal{M}}_{0,k}(X,J)_{\mathcal{U}_\ell}^E := c^{-1} \left( \mathcal{C}(X)^E_{\mathcal{U}_\ell} \right) \subseteq \ov{\mathcal{M}}_{0,k}(X,J)^E
    \end{equation}
    form a finite decomposition of the moduli space of $J$-holomorphic maps of energy $\le E$ into disjoint open and closed subsets,
    \begin{equation}
        \ov{\mathcal{M}}_{0,k}(X,J)^E = \bigcup_{\ell} \ov{\mathcal{M}}_{0,k}(X,J)_{\mathcal{U}_\ell}^E.
    \end{equation}
\end{cor}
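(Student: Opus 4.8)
The plan is to simply pull back the cluster decomposition of $\mathcal{C}(X)^E$ along the underlying-curve map $c$, so that the corollary becomes a formal consequence of the preceding proposition together with the continuity and image properties of $c$. First I would record that on $\ov{\mathcal{M}}_{0,k}(X,J)^E$ the map $c$ is continuous: it is the composite of the continuous forgetful map $\ov{\mathcal{M}}_{0,k}(X,J)^E \to \ov{\mathcal{M}}_0(X,J)^E$ with $c : \ov{\mathcal{M}}_0(X,J)^E \to \mathcal{K}$, whose continuity and properness on the energy-$\le E$ locus are established in \cite[Section 1]{IP-GV}, \cite[Theorem 2.11]{doan2021gopakumar}. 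Next I would check that, for $J \in \mathcal{J}(X)^E$, the image of $c$ restricted to $\ov{\mathcal{M}}_{0,k}(X,J)^E$ actually lies in $\mathcal{C}(X)^E \subseteq \mathcal{K}$: each simple genus $0$ $J$-holomorphic map of energy $\le E$ is embedded, and since condition (3) in the definition of $E$-admissibility makes the images of distinct simple maps disjoint, the connectedness and tree-like dual graph of a genus $0$ stable domain force every such stable map to have image a single embedded curve — any ``bridging'' ghost component would have its point-image lying in the intersection of two disjoint embedded curves, which is impossible. Thus $c$ restricts to a continuous map $\ov{\mathcal{M}}_{0,k}(X,J)^E \to \mathcal{C}(X)^E$. (If one insists on including energy-$0$ constant maps here, one adds a sentence treating them; but since in this paper the classical and effective parts are handled separately, it is harmless to restrict to positive energy.)

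With this in place, I would invoke the cited proposition verbatim: the sets $\mathcal{C}(X)^E_{\mathcal{U}_\ell} = \mathcal{C}(X)^E \cap \mathcal{U}_\ell$ are finitely many, pairwise disjoint (because the $\mathcal{U}_\ell$ are disjoint), each open in $\mathcal{C}(X)^E$ (since $\mathcal{U}_\ell$ is open in $\mathcal{K}$) and each closed in $\mathcal{C}(X)^E$ (since $\mathcal{C}(X)^E \cap \partial \mathcal{U}_\ell = \emptyset$ gives $\mathcal{C}(X)^E \cap \mathcal{U}_\ell = \mathcal{C}(X)^E \cap \ov{\mathcal{U}_\ell}$), and together they cover $\mathcal{C}(X)^E$. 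Taking preimages under the continuous map $c$ then yields exactly the asserted statement: $\ov{\mathcal{M}}_{0,k}(X,J)_{\mathcal{U}_\ell}^E = c^{-1}(\mathcal{C}(X)^E_{\mathcal{U}_\ell})$ is open (preimage of an open set) and closed (preimage of a closed set); these subsets are pairwise disjoint since their $c$-images are; and they cover $\ov{\mathcal{M}}_{0,k}(X,J)^E$ since $c$ maps it into $\mathcal{C}(X)^E = \bigcup_\ell \mathcal{C}(X)^E_{\mathcal{U}_\ell}$.

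I do not expect any genuine obstacle. The only point requiring an argument rather than a citation is that $c$ lands in $\mathcal{C}(X)^E$ and not merely in $\mathcal{K}$, and this was already flagged immediately after the definition of $\mathcal{C}(X)^E$; everything else is the elementary fact that the preimage of a finite clopen partition under a continuous map is again a finite clopen partition.
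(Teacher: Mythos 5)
Your proposal is correct and is exactly the intended argument: the paper states this corollary without proof, as an immediate consequence of the preceding proposition obtained by pulling back the finite clopen partition of $\mathcal{C}(X)^E$ along the continuous map $c$ (whose factorization through $\mathcal{C}(X)^E$ the paper asserts in the sentence ``the image of any map in $\ov{\mathcal{M}}(X,J)^E$ under $c$ factors through $\mathcal{C}(X)^E$''). Your extra justification of that factorization via condition (3) of $E$-admissibility and the connectedness of genus $0$ stable domains, together with the caveat about constant maps, fills in details the paper leaves implicit but does not change the route.
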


The key claim \cite[Lemma 2.1]{IP-GV} is that this induced decomposition is in fact locally constant in $J$, which uses the fact that there is no embedded genus $0$ $J$-holomorphic curve on the boundary of the cluster $\mathcal{U}$, property (4) of \cref{defn:clusters}.

In this paper, as we would like to only use classical approaches to define symplectic enumerative invariants as in \cite{MS12}, we need a variant of the cluster decomposition incorporating inhomogeneous perturbations and other transversality requirements.

\subsubsection{Inhomogeneous perturbations}
We consider inhomogeneous perturbations $\nu \in \Gamma(\mathbb{P}^1 \times X, \Omega^{0,1}_{\mathbb{P}^1} \otimes TX)$, from which we define a $(J, \nu)$-holomorphic map from $\mathbb{P}^1$ to $X$ to be a smooth map $u: \mathbb{P}^1 \to X$ such that
\begin{equation}
    \ov{\partial}_{J} u(z) = \frac12 (du + J \circ du \circ j)(z) = \nu(z, u(z)), \quad \quad \quad \forall z \in \mathbb{P}^1,
\end{equation}
where $j$ is the canonical integrable complex structure of $\mathbb{P}^1$.

\begin{lemma}\label[lemma]{lemma:compactness}
    Fix $J \in \mathcal{J}(X)^E$ and $k \in \mathbb{Z}_{\geq 1}$. For any $\epsilon > 0$, there exists $\delta > 0$ such that for any $\| \nu \|_{C^k} \leq \delta$, any $(J, \nu)$-holomorphic sphere $u$ with energy at most $E$ must have image contained in an $\epsilon$-neighborhood of a simple $J$-holomorphic curve of energy at most $E$ under the Hausdorff topology.
\end{lemma}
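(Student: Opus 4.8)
The plan is to argue by contradiction using Gromov compactness. Suppose the statement fails: then there is some $\epsilon_0 > 0$ and a sequence $\nu_n$ with $\|\nu_n\|_{C^k} \to 0$, together with $(J,\nu_n)$-holomorphic spheres $u_n$ with $E(u_n) \le E$, such that the image of each $u_n$ is \emph{not} contained in the $\epsilon_0$-neighborhood (in the Hausdorff topology on $\mathcal{K}(X)$) of any simple $J$-holomorphic curve of energy $\le E$. First I would invoke Gromov compactness for sequences of $(J,\nu_n)$-holomorphic maps with uniformly bounded energy and inhomogeneous terms converging to $0$ in $C^k$ (for $k$ large enough, e.g.\ $k \ge 2$): after passing to a subsequence, the $u_n$ converge in the Gromov sense to a genuine $J$-holomorphic stable map $u_\infty: \Sigma_\infty \to X$ of genus $0$ with $E(u_\infty) \le E$, since the perturbation disappears in the limit. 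This uses that the $C^0$-bound on $\nu_n$ gives the a priori energy-to-derivative estimates needed for bubbling analysis, exactly as in the standard unperturbed case \cite[Chapter 4, Chapter 6]{MS12}.

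Next I would pass from the stable limit $u_\infty$ to its underlying curve $c(u_\infty) = \mathrm{im}(u_\infty) \in \mathcal{C}(X)^E$, using that $J \in \mathcal{J}(X)^E$ forces every genus $0$ $J$-holomorphic stable map of energy $\le E$ to have image a union of (finitely many) embedded simple $J$-holomorphic curves of energy $\le E$ — indeed, by the disjointness property (3) of $\mathcal{J}(X)^E$, distinct simple components have disjoint images, so a connected genus $0$ stable map (whose domain is a tree of spheres) must in fact have image equal to a \emph{single} embedded simple curve $C$ (all components cover the same $C$). Then I would use the continuity of the underlying-curve map $c$ with respect to the Gromov topology (the composition $\ov{\mathcal{M}}_{0}(X,(J,\nu_n)) \to \mathcal{K}(X)$, $u \mapsto \mathrm{im}(u)$, converges: Hausdorff-limit of images of a Gromov-convergent sequence is the image of the limit, cf.\ \cite[Section 1]{IP-GV}): this gives $d_H(\mathrm{im}(u_n), C) \to 0$, contradicting the assumption that $\mathrm{im}(u_n)$ stays $\epsilon_0$-far from every simple $J$-holomorphic curve of energy $\le E$. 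This contradiction proves the claim.

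The main obstacle — or rather the point requiring the most care — is the Gromov compactness step for the \emph{perturbed} equation with \emph{varying} perturbation $\nu_n \to 0$: one must check that the standard bubbling-off analysis goes through uniformly in $n$. Concretely, the key is the mean value / monotonicity estimate and the removable singularity theorem for $(J,\nu_n)$-holomorphic maps, which hold with constants depending only on $\|\nu_n\|_{C^0}$ (hence uniformly bounded here), so the usual soft rescaling argument produces a stable limit in which all components are \emph{honestly} $J$-holomorphic because $\|\nu_n\|_{C^k} \to 0$ kills the inhomogeneous term under any rescaling that does not blow it up — and on the principal (non-bubble) components no rescaling occurs, while on bubble components the rescaling sends $\nu_n$ to $0$ in $C^0_{\mathrm{loc}}$. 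A secondary subtlety is that one should phrase the conclusion in terms of the \emph{image} rather than the parametrized map, since the perturbation breaks the reparametrization invariance of the domain; but this is exactly why the statement is posed at the level of $\mathcal{K}(X)$, and the continuity of $c$ handles it cleanly. One should also note that the energy identity for the perturbed equation reads $E(u_n) = \int u_n^*\omega - (\text{lower order in }\nu_n)$, so $\omega[u_n] \le E$ together with $\|\nu_n\|_{C^0}\to 0$ still gives uniformly bounded geometric energy, which is all that is needed.
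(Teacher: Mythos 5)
Your proposal is correct and follows essentially the same route as the paper: a contradiction argument via Gromov compactness for a sequence $\nu_n \to 0$, using that $J \in \mathcal{J}(X)^E$ forces the limit's image to be a single embedded simple curve. The only cosmetic difference is that the paper packages the compactness step via the Gromov graph trick (viewing $(J,\nu)$-holomorphic maps as $J_\nu$-holomorphic sections of $\mathbb{P}^1 \times X$, so that standard unperturbed compactness for $J_{\nu_n} \to J_0$ applies), whereas you carry out the perturbed bubbling analysis directly — both are standard and equivalent.
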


\begin{proof}
    This is Gromov compactness: consider a $(J, \nu)$-holomorphic map $u: \mathbb{P}^1 \to X$ as a $J_\nu$-holomorphic section of $\mathbb{P}^1 \times X \to \mathbb{P}^1$ for a $\nu$-dependent almost complex structure
    \begin{equation}
        J_\nu = \begin{pmatrix}
            j & 0 \\ \nu \circ j - J \circ \nu & J
        \end{pmatrix}
    \end{equation}
    on $\mathbb{P}^1 \times X$ (Gromov trick), and note that as $\nu \to 0$ the $J_\nu$-holomorphic sphere converges to a stable map into $X$ whose image curve agrees with the image of the underlying simple map. Then, the statement is deduced from a standard argument-by-contradiction.
\end{proof}

For sufficiently small $\nu$, this allows us to consider a cluster decomposition of moduli spaces of $(J,\nu)$-holomorphic maps. Let
\begin{equation}
    \mathcal{M}_{0, k}(X, J, \nu) := \left\{ u : \mathbb{P}^1 \to X: (\ov{\partial}_J u)_z = \nu_{z, u(z)} \right\}
\end{equation}
be the moduli spaces of solutions to the problem with inhomogeneous perturbation where $\mathbb{P}^1$ carries $k$ fixed marked points and denote by $\ov{\mathcal{M}}_{0, k}(X, J, \nu)$ the stable map compactification.

\begin{lemma}\label[lemma]{lemma:nu-cluster-decomposition}
    Fix $J \in \mathcal{J}(X)^E$. Given a finite cluster decomposition of the embedded $J$-holomorphic curves $\mathcal{C}(X)^E = \bigcup_{\ell} \mathcal{C}(X)^E_{\mathcal{U}_\ell}$, for sufficiently $C^k$-small $\nu$ the preimages
    \begin{equation}
        \ov{\mathcal{M}}_{0,k}(X,J, \nu)_{\mathcal{U}_\ell}^E := c^{-1} \left( \mathcal{U}_\ell \right) \subseteq \ov{\mathcal{M}}_{0,k}(X,J, \nu)^E
    \end{equation}
    form a finite decomposition of the moduli space of $(J,\nu)$-holomorphic maps of energy $\le E$ into disjoint open and closed subsets.
\end{lemma}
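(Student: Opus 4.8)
The plan is to prove this by the same Gromov-compactness argument-by-contradiction used to establish \cref{lemma:compactness}, but carried out for entire stable maps rather than for a single sphere. Write the given cluster decomposition as $\mathcal{C}(X)^E \subseteq \bigcup_{\ell=1}^{N} \mathcal{U}_\ell$, where the open sets $\mathcal{U}_\ell \subseteq \mathcal{K}$ are pairwise disjoint, satisfy $\mathcal{C}(X)^E \cap \partial\mathcal{U}_\ell = \emptyset$, and (recall that we only use clusters of this shape) are $\epsilon_\ell$-neighborhoods of the cores $C_\ell$ in the Hausdorff metric. Then $\mathcal{W} := \bigl( \bigcup_\ell \mathcal{U}_\ell \bigr) \setminus \bigl( \bigcup_\ell \partial\mathcal{U}_\ell \bigr)$ is open in $\mathcal{K}$, contains the compact set $\mathcal{C}(X)^E$ (compactness by \cite{IP-GV}), and every one of its points lies in exactly one $\mathcal{U}_\ell$. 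Since $\mathcal{C}(X)^E$ is compact and $\mathcal{W}$ is open, there is $\rho > 0$ such that the $\rho$-neighborhood $\mathcal{V}$ of $\mathcal{C}(X)^E$ in the Hausdorff metric on $\mathcal{K}$ satisfies $\mathcal{V} \subseteq \mathcal{W}$.

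I would then show that for all sufficiently $C^k$-small $\nu$ one has $c\bigl( \ov{\mathcal{M}}_{0,k}(X,J,\nu)^E \bigr) \subseteq \mathcal{V}$. If not, there are $\nu_n \to 0$ in $C^k$ and $(J,\nu_n)$-holomorphic stable maps $u_n$ of genus $0$ with $k$ marked points and energy $\le E$ with $c(u_n) \notin \mathcal{V}$. Viewing each $u_n$ as a $J_{\nu_n}$-holomorphic section of $\mathbb{P}^1 \times X$ (the Gromov trick, as in the proof of \cref{lemma:compactness}) and applying Gromov compactness, after passing to a subsequence $u_n$ converges to a genuine $J$-holomorphic stable map $u_\infty$ of genus $0$ with $k$ marked points and energy $\le E$, and $c(u_n) = \mathrm{im}(u_n) \to \mathrm{im}(u_\infty)$ in the Hausdorff metric. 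Because the domain of $u_\infty$ is a connected tree of spheres and, for $J \in \mathcal{J}(X)^E$, the images of any two distinct simple genus $0$ $J$-holomorphic maps of energy $\le E$ are disjoint, walking along the tree shows that the images of all nonconstant components of $u_\infty$ coincide with a single embedded curve $C_0 \in \mathcal{C}(X)^E$ (a chain of constant components joining two components with distinct images would force those images to intersect); the constant components then contribute only points of $C_0$, so $\mathrm{im}(u_\infty) = C_0 \in \mathcal{C}(X)^E$. Hence $c(u_n) \in \mathcal{V}$ for $n$ large, a contradiction. (If $u_\infty$ carried no nonconstant component its image would be a single point; such maps occur only for the trivial class and are irrelevant to the purely quantum part considered here — alternatively they may simply be collected into their own open-and-closed pieces.)

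Fix such a $\nu$. Then $c$ maps $\ov{\mathcal{M}}_{0,k}(X,J,\nu)^E$ into $\mathcal{V} \subseteq \mathcal{W}$, so, the $\mathcal{U}_\ell$ being pairwise disjoint, the preimages $c^{-1}(\mathcal{U}_\ell)$ for $\ell = 1,\dots,N$ are pairwise disjoint and cover $\ov{\mathcal{M}}_{0,k}(X,J,\nu)^E$. Each $c^{-1}(\mathcal{U}_\ell)$ is open because $c$ is continuous for the Gromov topology and $\mathcal{U}_\ell$ is open; being a block of a finite partition into open sets, it is also closed. This yields the asserted finite decomposition into disjoint open-and-closed subsets. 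The one step that is not routine is the identification of $\mathrm{im}(u_\infty)$ with a single embedded curve — this is where the $E$-admissibility of $J$ enters and where the actual work lies; the continuity and properness of $c$ on the perturbed moduli space, and the passage from an open finite partition to a clopen one, are standard and mirror the unperturbed discussion above.
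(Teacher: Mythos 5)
Your proof is correct and follows essentially the same route as the paper: openness of $c^{-1}(\mathcal{U}_\ell)$ from continuity of $c$, and closedness from a small-$\nu$ Gromov compactness statement forcing $c\bigl(\ov{\mathcal{M}}_{0,k}(X,J,\nu)^E\bigr)$ into a neighborhood of $\mathcal{C}(X)^E$ disjoint from $\bigcup_\ell \partial\mathcal{U}_\ell$ (the paper uses a per-cluster distance $\epsilon_\ell$ to $\partial\mathcal{U}_\ell$ and cites \cref{lemma:compactness} directly, whereas you use a uniform $\rho$-neighborhood and re-derive the compactness input for stable maps). Your explicit verification that the Gromov limit's image is a single embedded curve is a welcome elaboration of a step the paper leaves implicit in \cref{lemma:compactness}, but it does not change the argument.
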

\begin{proof}
    Since $\mathcal{U}_\ell$ is open in $\mathcal{K}$ and $c$ is continuous, the preimage $\ov{\mathcal{M}}_{0,k}(X,J, \nu)_{\mathcal{U}_\ell}^E = c^{-1}(\mathcal{U}_\ell)$ is open. It remains to show that $\ov{\mathcal{M}}_{0,k}(X,J, \nu)_{\mathcal{U}_\ell}^E $ is also closed. Note that in the case of genuine $J$-holomorphic maps, we used the fact that $\mathcal{C}(X)^E_{\mathcal{U}_\ell}$ is a closed subset of $\mathcal{C}(X)^E$ (that is, there is no embedded $J$-holomorphic curve of energy $\leq E$ on the boundary $\partial \mathcal{U}_\ell$). 

    By property (4) in the definition of the cluster (\cref{defn:clusters}) for each cluster $\mathcal{U}_\ell$ there exists a strictly positive $\epsilon_\ell = \inf_{C \in \mathcal{C}(X)^{E}_{\mathcal{U}_\ell}} d (C, \partial \mathcal{U}_\ell) > 0$. Choose $\epsilon>0$ such that $\epsilon < \frac{1}{2} \min_\ell \epsilon_\ell$, using that the cluster decomposition $\mathcal{C}(X)^E = \bigcup_\ell \mathcal{C}(X)^E_{\mathcal{U}_\ell}$ is finite. By \cref{lemma:compactness}, choose a small $\nu$ such that $c(\ov{\mathcal{M}}_{0,k}(X,J, \nu)^E)$ is contained in an $\epsilon$-neighborhood of $\mathcal{C}(X)^E$. If $u \in \ov{\mathcal{M}}_{0,k}(X,J, \nu)^E$, then there exists $C \in \mathcal{C}(X)^E_{\mathcal{U}_\ell}$ such that $d(C,c(u)) < \epsilon$. Then $ d(c(u), \partial \mathcal{U}_\ell) \ge d(C, \partial \mathcal{U}_\ell) - d(C, c(u)) \ge \epsilon_\ell - \epsilon > 0$. Hence, $c^{-1}(\ov{\mathcal{U}_\ell}) = c^{-1}(\mathcal{U}_\ell)$. Since $\ov{\mathcal{U}_\ell}$ is closed we conclude that $c^{-1}(\mathcal{U}_\ell)$ is also closed as desired.
\end{proof}

Note that the theorem also holds in a parametrized form for a family of inhomogeneous terms $\nu_w$ for $w \in S$ for compact $S$ (with the same proof), that is given a uniform bound on $\sup_{w \in S} \|\nu_w\|_{C^k}$ there is an induced cluster decomposition of the parametrized moduli spaces of pairs $(u, w)$ such that $\ov{\partial}_J u = \nu_w$.

\subsubsection{Incidence constraints}

Recall that our moduli spaces $\ov{\mathcal{M}}_{0,k}(X,J, \nu)^E$ consist of maps from a domain with a distinguished component (the parametrized copy of $\mathbb{P}^1$), which in the open locus $\mathcal{M}_{0,k}(X,J, \nu)^E$ carries $k$ \emph{fixed} marked points. We may extend the cluster decomposition to the moduli spaces where we impose incidence constraints on these marked points by pesudocycles.

Fix cohomology classes $a_1, \dots, a_k \in H^*(X;\mathbb{F}_p)$ and consider their $p$-pseudocycle representatives as $\Psi(a_i) \in \mathfrak{H}_*^{(p)}(X)$, cf. \cref{ssec:pseudocycles}. Let us denote the fixed representatives by maps $\Psi(a_i): A_i \to X$.

Consider the evaluation map
\begin{equation}
    \mathrm{ev} : \ov{\mathcal{M}}_{0,k}(X,J, \nu)^E \to X^k
\end{equation}
and its fiber product with the pseudocycle maps
\begin{center}
    \begin{tikzcd}
        & A_1 \times \cdots \times A_k \dar["\prod_i \Psi(a_i)"] \\ \ov{\mathcal{M}}_{0,k}(X,J, \nu)^E \rar["\mathrm{ev}"] &  X^k.
    \end{tikzcd}
\end{center}
We denote by $\ov{\mathcal{M}}_{0,k}(X,J, \nu)^E_{a_1, \dots, a_k}$ this fiber product. The following observation is simple but it is essential to define ($E$-truncated) correlators for quantum operations from the moduli space $\ov{\mathcal{M}}_{0,k}(X,J, \nu)^E$.

\begin{lemma}\label[lemma]{lemma:ic-transversality}
    For $J \in \mathcal{J}(X)^E$, consider a finite cluster decomposition $\mathcal{C}(X)^E = \bigcup_\ell \mathcal{C}(X)^E_{\mathcal{U}_\ell}$. Denote the core curve of the cluster $\mathcal{C}(X)^E_{\mathcal{U}_\ell}$ by $C_\ell \in \mathcal{K}(X)$. Suppose $\sum_{i=1}^k |a_i|  = \dim_{\mathbb{R}} \ov{\mathcal{M}}_{0,k}(X,J, \nu)^E$. Then the $p$-pseudocycle representatives $\Psi(a_i)$ and the inhomogeneous term $\nu$ can be chosen generically so that \begin{enumerate}
        \item Each $\Psi(a_i)$ intersects all $C_\ell$ transversely,
        \item $\mathrm{ev}$ restricted to the open locus $\mathcal{M}_{0,k}(X,J, \nu)^E$ defines a (integral) pseudocycle, which intersects $\prod_i \Psi(a_i)$ transversely (at finitely many points).
        \item $\prod_i \Psi(a_i)$ intersects with the image of $\ov{\mathcal{M}}_{0,k}(X,J, \nu)^E$ along $\mathrm{ev}$ only along the open locus $\mathcal{M}_{0,k}(X,J, \nu)^E$ and in the interior of $A_1 \times \cdots \times A_k$.
    \end{enumerate}
    Moreover, the signed count of $\ov{\mathcal{M}}_{0,k}(X,J, \nu)^E_{a_1, \dots, a_k}$ is well-defined (independent of $J \in \mathcal{J}(X)^E$, $\nu$, and choices of representatives of $\Psi(a_i)$) modulo $p$.
\end{lemma}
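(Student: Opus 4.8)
The plan is to combine standard genericity arguments from the theory of pseudocycles (as in \cite[Section 6.5]{MS12} and \cite[Appendix A]{bai-pomerleano-xu}) with the cluster decomposition already established in \cref{lemma:nu-cluster-decomposition}. The key point is that all three transversality conditions are open and dense conditions once we know the relevant source spaces are finite-dimensional manifolds (or have a stratification by such), and that the core curves $C_\ell$ form a \emph{finite} set, which is exactly what the cluster formalism buys us.

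First I would establish (1). Since $J \in \mathcal{J}(X)^E$, each core curve $C_\ell$ is an embedded surface of real dimension $2$ in $X$, and there are finitely many of them. The set of $p$-pseudocycle representatives of $\Phi(a_i)$ that are transverse to a fixed smooth submanifold is residual (the usual parametric transversality / Thom argument, applied to the smooth maps $A_i \to X$ defining the representatives, which may be homotoped freely within their bordism class); intersecting over the finitely many $C_\ell$ and the finitely many $i$, genericity of the representatives achieves (1). For (2), I would first invoke the standard fact (as recalled in the discussion preceding \cref{lemma:quantum-product-correspondence}) that for $C^\infty$-generic $\nu$, the restriction of $\mathrm{ev}$ to the simple locus $\mathcal{M}_{0,k}(X,J,\nu)^E$ is an integral pseudocycle of dimension $\dim_{\mathbb{R}} X + 2c_1(A) + 2k = \dim_{\mathbb{R}} X + 2k$ in $X^k$ (using $c_1 = 0$); here one must use the Calabi--Yau/weak monotonicity condition to handle multiply covered and boundary strata so that the $\Omega$-limit set has codimension $\ge 2$. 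Then, again by parametric transversality applied to the product $\prod_i \Phi(a_i)$ against this pseudocycle, under the dimension hypothesis $\sum_i |a_i| = \dim_{\mathbb{R}} \ov{\mathcal{M}}_{0,k}(X,J,\nu)^E$ the intersection is transverse and hence consists of finitely many points (precompactness of $\mathrm{ev}(\mathcal{M}_{0,k})$ follows from Gromov compactness with the energy bound $E$).

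Next I would address (3), which is where the codimension counting becomes the main obstacle. The complement of the open locus $\ov{\mathcal{M}}_{0,k}(X,J,\nu)^E \setminus \mathcal{M}_{0,k}(X,J,\nu)^E$ consists of genuine stable maps with nodal domain or with nontrivial bubbling, and there one has both strata with extra automorphisms (multiple covers) and strata of nodal configurations. The key estimate is that under our positivity assumption, the image of each such stratum under $\mathrm{ev}$ has codimension $\ge 2$ in $X^k$ relative to the main stratum; combined with the dimension hypothesis this forces $\prod_i \Phi(a_i)$ to miss these images for generic choices, and similarly the boundary $\partial(A_1 \times \cdots \times A_k)$, which maps to a set of dimension $\le \sum_i (|a_i|) - 1$ in $X^k$ after the Poincar\'e-duality shift, is avoided. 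The subtle point — and the one I expect to require the most care — is the multiple-cover strata: for these the evaluation map is not cut out transversally even generically, so one cannot apply Sard directly, and instead one must argue via the obstruction-bundle / cluster structure that near the core curves these strata are covered by the local $\mathbb{P}^1$ model, where the relevant moduli spaces are explicitly understood; this is where condition (1) is used crucially, ensuring the incidence pseudocycles meet the cores cleanly so that the constrained multiple-cover loci have the expected (negative) virtual dimension.

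Finally, well-definedness modulo $p$: the signed count of $\ov{\mathcal{M}}_{0,k}(X,J,\nu)^E_{a_1,\dots,a_k}$ is the intersection number, in the sense of $p$-pseudocycles, of $\mathrm{ev}_*[\mathcal{M}_{0,k}(X,J,\nu)^E]$ with $\prod_i \Phi(a_i)$ in $X^k$. Independence of the auxiliary data then follows from the usual cobordism argument: given two generic choices $(J_0,\nu_0,\{\Phi_0(a_i)\})$ and $(J_1,\nu_1,\{\Phi_1(a_i)\})$, both inside $\mathcal{J}(X)^E$, one connects them by a generic path; since $\mathcal{J}(X)^E$ is (path-)connected and dense, the parametrized moduli space is a $p$-pseudocycle cobordism, whose $\Omega$-limit set has codimension $\ge 1$ exactly because $E$-admissibility is maintained along the path and hence no new embedded curves of energy $\le E$ appear on cluster boundaries (this is the parametrized version of \cref{lemma:nu-cluster-decomposition} noted after its proof). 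The count is therefore well-defined modulo $p$, the mod-$p$ ambiguity being precisely the contribution of the codimension-$1$ boundary of $p$-pseudocycles, which by definition is a $p$-fold cover. This completes the plan.
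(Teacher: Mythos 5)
Your proposal follows the same route as the paper's proof, which disposes of (1) by noting that the finitely many core curves are embedded submanifolds, cites the standard transversality of evaluation maps from \cite[Chapter 6]{MS12} for (2) and (3), and obtains the mod-$p$ well-definedness by identifying the signed count of the fiber product with the intersection pairing of $p$-pseudocycles. Three points in your write-up should be corrected, though none is fatal. First, the dimension of $\mathcal{M}_{0,k}(X,J,\nu)$ is $\dim_{\mathbb{R}}X + 2c_1(A)$ with no ``$+2k$'': the source $\mathbb{P}^1$ is parametrized and the $k$ marked points are fixed, so they contribute nothing to the index (your subsequent use of the hypothesis $\sum_i |a_i| = \dim_{\mathbb{R}}\ov{\mathcal{M}}_{0,k}(X,J,\nu)^E$ is unaffected, but the stated formula is wrong). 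Second, for (3) you do not need transversality of $\mathrm{ev}$ on multiple-cover strata, nor any obstruction-bundle or local $\mathbb{P}^1$ analysis: the semipositive/Calabi--Yau argument only requires that the evaluation image of each boundary stratum be contained in the evaluation image of the underlying \emph{simple} stable maps, which for generic $(J,\nu)$ has dimension at most $\dim\mathcal{M}-2$; genericity of the representatives $\Phi(a_i)$ then makes $\prod_i\Phi(a_i)$ miss these images and the boundary of $A_1\times\cdots\times A_k$. Third, and most substantively, your cobordism argument for the ``moreover'' clause invokes path-connectedness of $\mathcal{J}(X)^E$ and preservation of $E$-admissibility along the path; neither holds --- generic paths between two elements of $\mathcal{J}(X)^E$ cross the wall $\mathcal{W}$ at finitely many points, which is precisely why the paper needs the wall-crossing analysis of \cref{prop:isotopy} for the \emph{cluster components} of the count. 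For the \emph{total} count asserted in the lemma this is irrelevant: one takes a generic path in the full contractible space of compatible almost complex structures, and the codimension bound on the $\Omega$-limit set of the parametrized evaluation map comes from semipositivity, not from $E$-admissibility (the energy truncation is preserved because $\omega(A)$ is topological). With those premises removed, your argument reduces to the standard one and is correct.
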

\begin{proof}
    For $J \in \mathcal{J}(X)^E$, the core curves $C_\ell$ are embedded submanifolds of $X$, hence (1) can be arranged. (2) and (3) are standard (transversality of evaluation maps, \cite[Chapter 6]{MS12}). The signed count of the fiber product $\ov{\mathcal{M}}_{0,k}(X,J, \nu)^E \cap (A_1 \times \cdots \times A_k)$, assuming transversality conditions (2) and (3), agree with the intersection pairing of $p$-pseudocycles applied to the (mod $p$-reduction of) the evaluation pseudocycle and the incidence $p$-pseudocycles $\Psi(a_i)$, hence is well-defined in $\mathbb{F}_p$.
\end{proof}

The cluster decomposition also holds for these moduli spaces of maps satisfying the incidence constraints:

\begin{lemma}\label[lemma]{lemma:ic-cluster-decomposition}
    Given a finite cluster decomposition of the embedded $J$-holomorphic curves $\mathcal{C}(X)^E = \bigcup_{\ell} \mathcal{C}(X)^E_{\mathcal{U}_\ell}$, for sufficiently $C^k$-small $\nu$ the preimages
    \begin{equation}
        \left(\ov{\mathcal{M}}_{0,k}(X,J, \nu)_{a_1, \dots, a_k}^E \right)_{\mathcal{U}_\ell} := c^{-1} \left( \mathcal{U}_\ell \right) \subseteq \ov{\mathcal{M}}_{0,k}(X,J, \nu)^E_{a_1, \dots, a_k}
    \end{equation}
    form a finite decomposition of the moduli space of $(J,\nu)$-holomorphic maps of energy $\le E$ satisfying incidence constraints into disjoint open and closed subsets.
\end{lemma}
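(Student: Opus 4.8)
The plan is to deduce \cref{lemma:ic-cluster-decomposition} directly from \cref{lemma:nu-cluster-decomposition} by restricting the cluster decomposition along the forgetful projection, since all the genuine work is already contained in the two preceding lemmas. Recall that the fiber product $\ov{\mathcal{M}}_{0,k}(X,J,\nu)^E_{a_1,\dots,a_k}$ is, by construction, the subspace
\[
\left\{ \left(u, (x_1,\dots,x_k)\right) : \mathrm{ev}(u) = \left(\Phi(a_1)(x_1), \dots, \Phi(a_k)(x_k)\right) \right\} \subseteq \ov{\mathcal{M}}_{0,k}(X,J,\nu)^E \times (A_1 \times \cdots \times A_k),
\]
and it carries the restriction $\pi$ of the first projection, which is therefore continuous \emph{without} any transversality input. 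Moreover the ``underlying curve'' map $c$ on the fiber product is by definition the composite of $\pi$ with the map $c$ on $\ov{\mathcal{M}}_{0,k}(X,J,\nu)^E$ (the latter obtained by precomposing with the forgetful map to $\ov{\mathcal{M}}_0(X,J)$ and then with the continuous map $\mathcal{C}(X)^E \to \mathcal{K}$). Hence for each cluster $\mathcal{U}_\ell$ in the fixed finite decomposition $\mathcal{C}(X)^E = \bigcup_\ell \mathcal{C}(X)^E_{\mathcal{U}_\ell}$ one has the identification $c^{-1}(\mathcal{U}_\ell) = \pi^{-1}\bigl(\ov{\mathcal{M}}_{0,k}(X,J,\nu)^E_{\mathcal{U}_\ell}\bigr)$, where $\ov{\mathcal{M}}_{0,k}(X,J,\nu)^E_{\mathcal{U}_\ell} = c^{-1}(\mathcal{U}_\ell)$ is the cluster piece downstairs.

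With this in hand, the argument is immediate. Take $\nu$ to be $C^k$-small below the threshold furnished by \cref{lemma:nu-cluster-decomposition}; note that this threshold depends only on $J$, $E$, $k$ and the chosen cluster data, not on the $p$-pseudocycle representatives $\Phi(a_i)$, so no new smallness constraint is imposed. By \cref{lemma:nu-cluster-decomposition} the sets $\ov{\mathcal{M}}_{0,k}(X,J,\nu)^E_{\mathcal{U}_\ell}$ form a finite collection of pairwise disjoint open and closed subsets covering $\ov{\mathcal{M}}_{0,k}(X,J,\nu)^E$. Continuity of $\pi$ then shows each $\pi^{-1}\bigl(\ov{\mathcal{M}}_{0,k}(X,J,\nu)^E_{\mathcal{U}_\ell}\bigr)$ is open and closed; preimage commutes with finite unions and preserves disjointness, so these preimages form a finite decomposition of $\ov{\mathcal{M}}_{0,k}(X,J,\nu)^E_{a_1,\dots,a_k}$ into disjoint open and closed subsets, which is exactly the asserted statement. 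The same proof, verbatim, gives the parametrized version over a compact base $S$ of inhomogeneous terms $\{\nu_w\}_{w \in S}$ with a uniform $C^k$-bound, which is the form actually used for the $\mu_p$-equivariant moduli spaces defining the quantum Steenrod operations.

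Since the topological decomposition is formal, there is no serious obstacle; the only points deserving care are the bookkeeping that $c$ on the fiber product genuinely equals $c \circ \pi$ (so that ``$c^{-1}(\mathcal{U}_\ell)$'' is unambiguous) and that the fiber product is taken in topological spaces so that $\pi$ is manifestly continuous. If, in the top-dimensional case $\sum_i |a_i| = \dim_{\mathbb{R}} \ov{\mathcal{M}}_{0,k}(X,J,\nu)^E$, one additionally wants each cluster piece to carry the structure needed to extract a well-defined mod $p$ count — i.e.\ transversality of the evaluation pseudocycle against $\prod_i \Phi(a_i)$ and against the core curves — one invokes \cref{lemma:ic-transversality} to choose the representatives $\Phi(a_i)$ and the term $\nu$ generically; but that genericity plays no role in the decomposition statement itself, and transversality restricts to each open and closed piece automatically.
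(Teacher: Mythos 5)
Your argument is correct and is essentially the paper's own proof: both deduce the statement from \cref{lemma:nu-cluster-decomposition} by noting that the projection from the fiber product to $\ov{\mathcal{M}}_{0,k}(X,J,\nu)^E$ is continuous, so the open-and-closed cluster pieces pull back. The extra bookkeeping you supply (that $c$ on the fiber product factors through this projection, and that transversality is irrelevant to the decomposition itself) is accurate but not needed beyond what the paper records.
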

\begin{proof}
    By \cref{lemma:nu-cluster-decomposition}, the cluster decomposition holds for $\ov{\mathcal{M}}_{0,k}(X,J, \nu)^E  = \bigcup_{\ell} \ov{\mathcal{M}}_{0,k}(X,J, \nu)^E _{\mathcal{U}_\ell}$ for sufficiently small $\nu$. The map $\ov{\mathcal{M}}_{0,k}(X,J, \nu)^E_{a_1, \dots, a_k} \to \ov{\mathcal{M}}_{0,k}(X,J, \nu)^E $ from the fiber product is continuous, so the cluster decomposition also holds for the moduli space satisfying incidence constraints.
\end{proof}

\subsection{Cluster decomposition of the structure constants}
Using the cluster formalism, here we show that the structure constants of the operators $Q\Sigma_b$ and $\psi_b$, that is the matrix coefficients of these operators after choosing a fixed basis of cohomology classes, inherit a cluster decomposition from the cluster decomposition of $J$-holomorphic curves.

\subsubsection{$p$-curvature}

Consider the $p$-curvature operator
\begin{equation}
    u^p \psi_b : QH^*(X;\Lambda_{\mathbb{F}_p})[u]\langle \theta \rangle \to QH^*(X;\Lambda_{\mathbb{F}_p})[u]\langle \theta \rangle.
\end{equation}

For two fixed cohomology classes $a_1, a_2 \in QH^*(X;\Lambda_{\mathbb{F}_p})[u]\langle \theta \rangle$, we denote the structure constants of the $p$-curvature operator by
\begin{equation}
  \left( u^p \psi_b(a_1), a_2 \right) =:  \left( u^p \psi_b|_{q^\beta = 0} (a_1), a_2 \right) + \sum_{i \ge 0} c_i \langle  u^p  \psi_b; a_1, a_2 \rangle (u, \theta)^i \in \Lambda[u]\langle \theta \rangle
\end{equation}
where $(\cdot, \cdot)$ is the Poincar\'e pairing (extended to $QH^*(X;\Lambda_{\mathbb{F}_p})[u]\langle \theta \rangle$ linearly on $q^\beta$ and $u, \theta$) and $c_i \langle  u^p \psi_b; a_1, a_2 \rangle  \in \Lambda$ is the coefficient of the $i$th equivariant parameter $(u, \theta)^i$ (cf. \cref{eqn:eq-param}). The first term $u^p \psi_b|_{q^\beta = 0} $ corresponds to the classical ($q^\beta = 0$) term of the operator. We denote the truncation of these structure constants by
\begin{equation}
    c_i \langle  u^p \psi_b; a_1, a_2 \rangle^E  \in \Lambda_{\le E}
\end{equation}
the image of $c_i \langle u^p  \psi_b; a_1, a_2 \rangle  \in \Lambda$ under the quotient $\Lambda \to \Lambda_{\le E}$ of the Novikov ring by the ideal $(q^\beta : \omega(\beta) > E)$. 

The goal of this subsection is to prove the following cluster decomposition:

\begin{prop}\label[prop]{prop:pcurv-cluster-decomposition}
    Let $J \in \mathcal{J}(X)^E$, and $\mathcal{C}(X)^E = \bigcup_\ell  \mathcal{C}(X)^E_{\mathcal{U}_\ell}$ a finite decomposition into $E$-clusters. Then there is an induced decomposition of the ($E$-truncation of the) structure constants,
    \begin{equation}
        c_i \langle  u^p \psi_b; a_1, a_2 \rangle  ^E = \sum_{\ell} c_i \langle  u^p \psi_b; a_1, a_2 \rangle ^E_{\mathcal{U}_\ell} \in \Lambda_{\le E},
    \end{equation}
    induced naturally in the sense that $c_i \langle u^p  \psi_b; a_1, a_2 \rangle ^E_{\mathcal{U}_\ell} $ are defined out of counts of moduli spaces of $J$-holomorphic maps whose underlying curves are supported in $\mathcal{U}_\ell$.
\end{prop}

We first prove the decomposition for the quantum product operator.  Let $b \in H^2(X;\mathbb{F}_p)$ and consider the quantum product
    \begin{equation}
        \mathbb{M}_b := b \ \star_q : QH^*(X;\Lambda_{\mathbb{F}_p})[u]\langle \theta \rangle \to QH^*(X;\Lambda_{\mathbb{F}_p})[u]\langle \theta \rangle
    \end{equation}
and its Novikov derivatives
\begin{equation}
    \mathbb{M}_b^{(j)} := \partial_b^j \mathbb{M}_b : QH^*(X;\Lambda_{\mathbb{F}_p})[u]\langle \theta \rangle \to QH^*(X;\Lambda_{\mathbb{F}_p})[u]\langle \theta \rangle.
\end{equation}
Let $\Lambda \to \Lambda_{\le E}$ be the $E$-truncation of the Novikov ring and denote the induced endomorphisms from $\mathbb{M}_b$, $\mathbb{M}_b^{(j)}$ on
\begin{equation}
    QH^*(X;\Lambda_{\mathbb{F}_p})[u]\langle \theta \rangle^E := H^*(X;\Lambda_{\le E})[u]\langle \theta \rangle
\end{equation}
with superscripts $E$ as $\mathbb{M}_b^E$, $\mathbb{M}_b^{(j), E}$. 

\begin{lemma}\label[lemma]{lemma:quantum-product-cluster-decomposition}
    Let $J \in \mathcal{J}(X)^E$, and $\mathcal{C}(X)^E = \bigcup_\ell  \mathcal{C}(X)^E_{\mathcal{U}_\ell}$ a finite decomposition into $E$-clusters. Then there is a cluster decomposition of operators
    \begin{equation}
        \mathbb{M}_b^E = \mathbb{M}_b|_{q^\beta = 0} +  \sum_\ell \mathbb{M}_{b, \mathcal{U}_\ell}^E, \quad         \mathbb{M}_b^{(j),E} = \sum_\ell \mathbb{M}_{b, \mathcal{U}_\ell}^{(j),E} 
    \end{equation}
    where the structure constants of $\mathbb{M}_{b, \mathcal{U}_\ell}^E$ are given by counts of moduli spaces of $J$-holomorphic maps satisfying incidence constraints whose underlying curves are supported in $\mathcal{U}_\ell$.
\end{lemma}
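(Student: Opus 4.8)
The plan is to realize each $E$-truncated matrix coefficient of $\mathbb{M}_b$ as a mod-$p$ count of a moduli space of perturbed maps with incidence constraints, and then split that moduli space along the given clusters using \cref{lemma:ic-cluster-decomposition}. Fix $J \in \mathcal{J}(X)^E$, which exists by density. By Gromov compactness only finitely many classes $A$ with $\omega(A) \le E$ support nonempty moduli spaces of genus $0$ $J$-holomorphic spheres, and in a fixed $\mathbb{F}_p$-basis $\{e_i\}$ of $H^*(X;\mathbb{F}_p)$, with Poincar\'e-dual basis $\{e_i^\vee\}$, only finitely many matrix coefficients of $\mathbb{M}_b^E$ can be nonzero for degree reasons. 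Hence we may choose a single $C^k$-small inhomogeneous term $\nu$ and generic $p$-pseudocycle representatives $\Phi(e_i), \Phi(e_i^\vee), \Phi(b)$ such that \cref{lemma:ic-transversality} and \cref{lemma:ic-cluster-decomposition} simultaneously apply to all the relevant moduli spaces $\ov{\mathcal{M}}_{0,3}(X,J,\nu)^E_{e_i, e_j^\vee, b}$ of perturbed spheres carrying the three incidence constraints at the marked points $z_0, z_\infty, z_1$. By the standard cobordism invariance of Gromov--Witten invariants under deformation of $(J,\nu)$, together with independence of the bordism class of $p$-pseudocycle representatives (\cref{ssec:pseudocycles}), the signed counts $\langle e_i, e_j^\vee, b\rangle_A$ so obtained are precisely the structure constants of the invariantly-defined $\mathbb{M}_b^E$. (One could equivalently phrase this through the correspondence description of \cref{lemma:quantum-product-correspondence}, imposing the incidence constraint only at the $b$-marked point; the argument is the same.)

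Next, \cref{lemma:ic-cluster-decomposition} presents each such moduli space, restricted to a fixed degree $A$, as a finite disjoint union of open-and-closed pieces indexed by the clusters $\mathcal{U}_\ell$. After cutting down to dimension zero the total space is a finite set of transverse intersection points (\cref{lemma:ic-transversality}), each contained in exactly one cluster piece, so the mod-$p$ count splits as $\langle e_i, e_j^\vee, b\rangle_A = \sum_\ell \langle e_i, e_j^\vee, b\rangle_{A,\mathcal{U}_\ell}$, where the $\ell$-th summand is the count of the piece whose underlying curves are supported in $\mathcal{U}_\ell$; by property (3) of \cref{defn:clusters} this term vanishes unless $A = k[C_\ell]$ for some integer $k \ge 1$. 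Declaring $\mathbb{M}_{b,\mathcal{U}_\ell}^E$ to be the $\Lambda_{\le E}$-linear operator (extended $t,\theta$-linearly) with structure constants $\sum_A \langle e_i, e_j^\vee, b\rangle_{A,\mathcal{U}_\ell}\, q^A$ then gives $\mathbb{M}_b^E = \sum_\ell \mathbb{M}_{b,\mathcal{U}_\ell}^E$ by construction, with each summand manifestly built only from curves supported in $\mathcal{U}_\ell$.

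The Novikov derivatives require no further geometry. Since $\partial_b$ acts on Novikov monomials by $q^A \mapsto (b\cdot A)\,q^A$, the operator $\mathbb{M}_b^{(j),E}$ has structure constants $\sum_A (b\cdot A)^j \langle e_i, e_j^\vee, b\rangle_A\,q^A$, and the splitting over $A$ --- hence over $\ell$ --- holds term by term. Setting $\mathbb{M}_{b,\mathcal{U}_\ell}^{(j),E} := \partial_b^j \mathbb{M}_{b,\mathcal{U}_\ell}^E$, which is well defined because on each Novikov degree $A = k[C_\ell]$ appearing in $\mathbb{M}_{b,\mathcal{U}_\ell}^E$ the pairing $b\cdot A = k\,(b\cdot[C_\ell])$ is a single number, yields $\mathbb{M}_b^{(j),E} = \sum_\ell \mathbb{M}_{b,\mathcal{U}_\ell}^{(j),E}$.

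I expect the only genuine point of care to be the first step: arranging a single pair $(J,\nu)$ and a single collection of generic $p$-pseudocycle representatives that at once achieve transversality for all the finitely many relevant moduli spaces --- over all relevant $A$ and all basis pairs $(e_i, e_j^\vee)$ --- and remain compatible with the prescribed cluster decomposition $\mathcal{C}(X)^E = \bigcup_\ell \mathcal{C}(X)^E_{\mathcal{U}_\ell}$, and then verifying that this choice still computes the invariantly-defined $\mathbb{M}_b^E$. Once this is in place, the decomposition itself is immediate from the open-and-closed property furnished by \cref{lemma:ic-cluster-decomposition}, and the passage to $\mathbb{M}_b^{(j),E}$ is purely formal.
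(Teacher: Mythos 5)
Your proof is correct and follows essentially the same route as the paper: split the constrained moduli spaces into open-and-closed cluster pieces via \cref{lemma:ic-cluster-decomposition}, define $\mathbb{M}^E_{b,\mathcal{U}_\ell}$ by the counts supported in $\mathcal{U}_\ell$, and treat the Novikov derivatives algebraically by weighting with $(b\cdot A)^j$. The only cosmetic difference is that the paper works with $\nu=0$ (transversality for the quantum product already holds for $J\in\mathcal{J}(X)^E$ via the correspondence of \cref{lemma:quantum-product-correspondence}), whereas you introduce a small inhomogeneous term; by the deformation invariance in \cref{lemma:ic-transversality} this computes the same operator, so nothing is lost.
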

\begin{proof}
    Recall from \cref{lemma:quantum-product-correspondence} that the operator $\mathbb{M}_b$ is induced by the correspondence
    \begin{equation}
        \mathrm{ev}_0 \times \mathrm{ev}_\infty : \ov{\mathcal{M}}(X, J, \beta)_b \to X \times X
    \end{equation}
    in the sense that the evaluation map above restricted to $\mathcal{M}(X,J,\beta)_b \to X \times X$ defines a pseudocycle $\mathrm{ev}_*[\ov{\mathcal{M}}_{\beta,b}] \in \mathcal{H}_*(X \times X)$ and the $q^\beta$-term of the structure constant $(\mathbb{M}_b(a_1), a_2) \in \Lambda$ is given by the pairing of $\mathrm{ev}_*[\ov{\mathcal{M}}_{\beta,b}]$ with $\Psi(a_1 \otimes a_2)$. Indeed, for $J \in \mathcal{J}(X)^E$, the simple maps are regular. For $\beta \in \mathrm{im}(\pi_2(X) \to H_2(X;\mathbb{Z}))$ such that $\omega(\beta) > E$, note moreover from $J \in \mathcal{J}(X)^E$ that embedded images of two simple maps which are not reparametrizations of each other do not intersect. These claims together imply that indeed $\mathcal{M}(X,J,\beta)_b^E \to X \times X$ defines an integral pseudocycle (see \cite[Section 6.2, 6.3]{MS12}).

    Now, our claim reduces to \cref{lemma:ic-cluster-decomposition} to the case $k=3$ and $\nu = 0$: for $a_1, a_2 \in H^*(X;\mathbb{F}_p)$, we declare
    \begin{equation}
        \left( \mathbb{M}^E_{b, \mathcal{U}_\ell} (a_1), a_2 \right) := \sum_\beta \left(\ov{\mathcal{M}}_{0,3}(X,J, \beta)_{a_1, a_2, b}^E \right)_{\mathcal{U}_\ell}  q^\beta \in \Lambda_{\le E},
    \end{equation}
    that is $\mathbb{M}_{b, \mathcal{U}_\ell}^E$ is the operator defined by the correspondences (weighted by $q^\beta$ for different components)
    \begin{equation}
        \mathrm{ev}_0 \times \mathrm{ev}_\infty : \left( \ov{\mathcal{M}}_{0,3}(X,J, \beta)^E_b \right)_{\mathcal{U}_\ell} \to X \times X.
    \end{equation}
    By construction, there is a decomposition $\mathbb{M}_b^E = \sum_\ell \mathbb{M}^E_{b,\mathcal{U}_\ell}$ with the desired properties. The decomposition of $\mathbb{M}_b^{(j), E}$ follows from a purely algebraic reason by declaring
     \begin{equation}
        \left( \mathbb{M}^{(j),E}_{b, \mathcal{U}_\ell} (a_1), a_2 \right) := \sum_\beta \left(\ov{\mathcal{M}}_{0,3}(X,J, \beta)_{a_1, a_2, b}^E \right)_{\mathcal{U}_\ell}  (b \cdot \beta)^j q^\beta \in \Lambda_{\le E}.
    \end{equation}
\end{proof}

Using the correspondence description of the operators, we can further deduce the following. Assume the cluster decomposition of the operators $\mathbb{M}_b^E$ and $\mathbb{M}_b^{(j), E}$ as above.

\begin{lemma}\label[lemma]{lemma:quantum-product-iteration}
    For $\ell_1 \neq \ell_2$ and any $j_1, j_2 \in \mathbb{Z}_{\ge 0}$, the composition of cluster components always vanish:
    \begin{equation}
        \mathbb{M}_{b, \mathcal{U}_{\ell_1}}^{(j_1), E} \circ         \mathbb{M}_{b, \mathcal{U}_{\ell_2}}^{(j_2), E} = 0.
    \end{equation}
    The same vanishing holds when classical cup products are inserted:
    \begin{equation}
        \mathbb{M}_{b, \mathcal{U}_{\ell_1}}^{(j_1), E} \circ \left(\mathbb{M}_b|_{q^\beta=0}\right)^N   \circ     \mathbb{M}_{b, \mathcal{U}_{\ell_2}}^{(j_2), E} = 0 \quad (N \ge 1).
    \end{equation}
\end{lemma}
\begin{proof}
    By construction, the operator $\mathbb{M}_{b, \mathcal{U}_\ell}^{(j), E}$ is defined by the correspondence on $H^*(X;\Lambda_{\le E})$ induced by
    $\mathrm{ev}_0 \times \mathrm{ev}_\infty : \left( \ov{\mathcal{M}}_{0,3}(X,J)^E_b \right)_{\mathcal{U}_\ell} \to X \times X$,
    where the correspondence arising from different components indexed by $\beta \in \mathrm{im}(\pi_2(X) \to H_2(X;\mathbb{Z}))$ are weighted by $(b \cdot \beta)^j q^\beta \in \Lambda_{\le E}$. But for $\ell_1 \neq \ell_2$ the moduli spaces
    \begin{equation}
        \left( \ov{\mathcal{M}}_{0,3}(X,J)^E_b \right)_{\mathcal{U}_{\ell_1}}, \left( \ov{\mathcal{M}}_{0,3}(X,J)^E_b \right)_{\mathcal{U}_{\ell_2}}  
    \end{equation}
    are disjoint. In particular, they tautologically intersect transversely so the correspondences can be composed, and the composition must be equal to $0$. The second claim follows by the same proof since the classical multiplication is induced by the diagonal correspondence.
\end{proof}

\begin{proof}[Proof of \cref{prop:pcurv-cluster-decomposition}]
    The $p$-curvature operator is defined as the following expression in terms of the quantum product $\mathbb{M}_b$:
\begin{equation}
    u^p \psi_b = (u\nabla_b)^p - u^{p-1}(u\nabla_b) = (u \partial_b + \mathbb{M}_b)^p - u^{p-1}(u \partial_b + \mathbb{M}_b).
\end{equation}
Even though the expression \emph{a priori} involves $\partial_b$, the resulting endomorphism $\psi_b$ is linear. Using $[u \partial_b, \mathbb{M}_b^{(j)}] = u \mathbb{M}_b^{(j+1)}$, we see that $u^p \psi_b = \sum_{i \ge 0} f_i (\mathbb{M}_b, \mathbb{M}_b^{(1)}, \dots) u^i$ itself is a \emph{polynomial} expression in $u$ with coefficients given by $f_i$ that are linear combinations of  the compositions of operators $\mathbb{M}_b, \mathbb{M}_b^{(1)}, \mathbb{M}_b^{(2)}, \cdots$.

In particular, the structure constants in concern have the form
\begin{align}
    c_i \langle u^p \psi_b;a_1, a_2\rangle &= \left( f_i (\mathbb{M}_b, \mathbb{M}_b^{(1)}, \dots)(a_1), a_2 \right) \in \Lambda, \\ \quad 
    c_i \langle u^p  \psi_b;a_1, a_2\rangle^E &= \left( f_i (\mathbb{M}_b^E, \mathbb{M}_b^{(1), E}, \dots) (a_1), a_2 \right) \in \Lambda_{\le E}.
\end{align}
From \cref{lemma:quantum-product-cluster-decomposition} we may expand each of the operators $\mathbb{M}_b^E = \mathbb{M}_b|_{q^\beta = 0} + \sum_\ell \mathbb{M}_{b, \mathcal{U}_\ell}^E$,  $\mathbb{M}_b^{(j), E} = \sum_\ell \mathbb{M}_{b, \mathcal{U}_\ell}^{(j), E} $ ($j \ge 1$) in their cluster decompositions, and \cref{lemma:quantum-product-iteration} implies that
\begin{equation}
    f_i \left(\mathbb{M}_b|_{q^\beta = 0} + \sum_\ell \mathbb{M}_{b, \mathcal{U}_\ell}^{E}, \sum_\ell \mathbb{M}_{b, \mathcal{U}_\ell}^{(1), E}, \dots \right) = \sum_\ell g_i \left( \mathbb{M}_{b, \mathcal{U}_\ell}^{E}, \mathbb{M}_{b, \mathcal{U}_\ell}^{(1), E}, \dots \right)
\end{equation}
for some other polynomials $g_i$. Therefore we may define
\begin{equation}
    c_i \langle  u^p  \psi_b; a_1, a_2 \rangle ^E_{\mathcal{U}_\ell}  := \left( g_i (\mathbb{M}_{b, \mathcal{U}_\ell}^E, \mathbb{M}_{b, \mathcal{U}_\ell}^{(1), E}, \dots) (a_1), a_2 \right) \in \Lambda_{\le E}
\end{equation}
for the desired decomposition.
\end{proof}

\subsubsection{Quantum Steenrod operators}
We continue with the parallel discussion for the quantum Steenrod operators
\begin{equation}
    Q\Sigma_b : QH^*(X;\Lambda_{\mathbb{F}_p})[u]\langle \theta \rangle \to QH^*(X;\Lambda_{\mathbb{F}_p})[u]\langle \theta \rangle.
\end{equation}

For two fixed cohomology classes $a_1, a_2 \in QH^*(X;\Lambda_{\mathbb{F}_p})[u]\langle \theta \rangle$, we denote its structure constants by
\begin{equation}
  \left( Q\Sigma_b(a_1), a_2 \right) =:  \left( Q\Sigma_b|_{q^\beta = 0}(a_1), a_2 \right) +  \sum_{i \ge 0} c_i \langle  Q\Sigma_b; a_1, a_2 \rangle (u, \theta)^i \in \Lambda[u]\langle \theta \rangle
\end{equation}
and their $E$-truncations by
\begin{equation}
    c_i \langle  Q\Sigma_b; a_1, a_2 \rangle^E  \in \Lambda_{\le E}.
\end{equation}

Using cluster decomposition of moduli spaces with inhomogeneous perturbations and incidence constraints, we may conclude that:

\begin{prop}\label[prop]{prop:qst-cluster-decomposition}
   
    Let $J \in \mathcal{J}(X)^E$, and $\mathcal{C}(X)^E = \bigcup_\ell  \mathcal{C}(X)^E_{\mathcal{U}_\ell}$ a finite decomposition into $E$-clusters. Then there is an induced decomposition of the ($E$-truncation of the) structure constants,
    \begin{equation}
        c_i \langle  Q\Sigma_b; a_1, a_2 \rangle  ^E = \sum_{\ell} c_i \langle  Q\Sigma_b; a_1, a_2 \rangle ^E_{\mathcal{U}_\ell} \in \Lambda_{\le E},
    \end{equation}
    induced naturally in the sense that $c_i \langle  Q\Sigma_b; a_1, a_2 \rangle ^E_{\mathcal{U}_\ell} $ are defined out of counts of moduli spaces of $(J, \nu)$-holomorphic maps whose underlying curves are supported in $\mathcal{U}_\ell$.
\end{prop}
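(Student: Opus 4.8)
The plan is to follow the template of the proof of \cref{prop:pcurv-cluster-decomposition}, but the argument here is actually more direct. The $p$-curvature structure constants had to be assembled from iterated compositions $\mathbb{M}^{(j_1),E}_{b,\mathcal{U}_{\ell_1}}\circ\mathbb{M}^{(j_2),E}_{b,\mathcal{U}_{\ell_2}}\circ\cdots$, which is why \cref{lemma:quantum-product-iteration} was needed to kill the cross-terms. By contrast, the quantum Steenrod correlator $\langle a_1,a_2,b^{\otimes p}\rangle_A^{eq,i}$ is by definition a single signed count extracted from the parametrized moduli space $\ov{\mathcal{M}}_A^{eq,i}$ after cutting down by the incidence $p$-pseudocycle $\Phi(a_1\otimes a_2\otimes b^{\otimes p})$. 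So it suffices to produce a cluster decomposition of $\ov{\mathcal{M}}_A^{eq,i}$ itself, compatibly with the incidence constraints, and then use that signed counts are additive over disjoint open-and-closed pieces.

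First I would reduce to a compact parameter space. By \cref{rem:CY-degree}, in the Calabi--Yau setting only finitely many equivariant degrees $i$ contribute, and for each such $i$ only finitely many $A$ with $\omega(A)\le E$ contribute, so the equivariant inhomogeneous term $\nu^{eq}$ may be taken parametrized over a finite-dimensional approximation $E\mu_p^{\mathrm{fin}}=S^{2N+1}$ (and the compact manifold with boundary $\ov{\Delta}_i$; for $i$ odd one works over the quotient $B\mu_p^{\mathrm{fin}}$ instead). I would then invoke the parametrized form of \cref{lemma:ic-cluster-decomposition}, noted to hold with the same proof for inhomogeneous terms parametrized over a compact base under a uniform $C^k$-bound: given $J\in\mathcal{J}(X)^E$ and a finite decomposition $\mathcal{C}(X)^E=\bigcup_\ell\mathcal{C}(X)^E_{\mathcal{U}_\ell}$, once $\sup_{w}\|\nu^{eq}_w\|_{C^k}$ is small enough the underlying-curve map $c:\ov{\mathcal{M}}_A^{eq,i}\to\mathcal{K}(X)$ (obtained by forgetting the equivariant parameter and the marked points and taking the image) has image inside $\bigsqcup_\ell\mathcal{U}_\ell$, so $c^{-1}(\mathcal{U}_\ell)=:(\ov{\mathcal{M}}_A^{eq,i})_{\mathcal{U}_\ell}$ are disjoint open-and-closed, and the same holds after fiber product with the incidence pseudocycles. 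The one compatibility to check is that the transversality demanded by \cref{lem:qst-pseudocycle} and \cref{lemma:ic-transversality} is $C^\infty$-generic and hence realizable \emph{inside} the $C^k$-ball where \cref{lemma:compactness} applies, so one may fix such a $\nu^{eq}$ together with generic $p$-pseudocycle representatives of $\Phi(a_1\otimes a_2\otimes b^{\otimes p})$.

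With these choices in hand I would simply declare $c_i\langle Q\Sigma_b;a_1,a_2\rangle^E_{\mathcal{U}_\ell}:=\sum_{\omega(A)\le E}(-1)^\dagger\big(\mathrm{ev}_*[(\ov{\mathcal{M}}_A^{eq,i})_{\mathcal{U}_\ell}]\cap\Phi(a_1\otimes a_2\otimes b^{\otimes p})\big)q^A\in\Lambda_{\le E}$, the count taken over the piece supported in $\mathcal{U}_\ell$. Additivity of signed counts over the disjoint open-and-closed pieces yields $c_i\langle Q\Sigma_b;a_1,a_2\rangle^E=\sum_\ell c_i\langle Q\Sigma_b;a_1,a_2\rangle^E_{\mathcal{U}_\ell}$. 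By property (3) of \cref{defn:clusters}, every $A$ contributing to the $\mathcal{U}_\ell$-term is a positive multiple of the class of the core $C_\ell$, so the decomposition is ``induced naturally'' in the required sense; independence of $J\in\mathcal{J}(X)^E$, of $\nu^{eq}$, and of the pseudocycle representatives is inherited from \cref{lemma:ic-transversality} together with local constancy of the cluster decomposition in $J$ (\cite[Lemma 2.1]{IP-GV}), running the usual cobordism argument one cluster at a time.

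The step I expect to be the main obstacle is the same one handled in the proof of \cref{lemma:nu-cluster-decomposition}: showing that each $(\ov{\mathcal{M}}_A^{eq,i})_{\mathcal{U}_\ell}$ is \emph{closed} under Gromov convergence, now in the presence of the strata with $\mathbb{Z}/p$-isotropy in the stable-map compactification, so that a sequence inside one cluster cannot bubble off a configuration whose image lands in a different cluster. This goes through verbatim: properness of $c$ on energy-$\le E$ maps together with \cref{lemma:compactness} forces a Gromov limit to remain in the same $\mathcal{U}_\ell$ because $\inf_{C\in\mathcal{C}(X)^E_{\mathcal{U}_\ell}}d(C,\partial\mathcal{U}_\ell)>0$, and the isotropy strata contribute nothing new to this argument since $c$ ignores both the equivariant parameter and the marked points; the only real requirement is that the $C^k$-bound on $\nu^{eq}$ be taken uniformly over the compact parameter space, which is precisely what the reduction of \cref{rem:CY-degree} permits.
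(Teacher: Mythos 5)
Your proposal is correct and follows essentially the same route as the paper: reduce to a compact parameter space via the finite-dimensional approximation of $E\mu_p$ from \cref{rem:CY-degree}, apply the parametrized form of the cluster decomposition with inhomogeneous perturbations and incidence constraints (\cref{lemma:nu-cluster-decomposition}, \cref{lemma:ic-transversality}, \cref{lemma:ic-cluster-decomposition}), and conclude by additivity of the signed counts over the disjoint open-and-closed pieces. Your added observations --- that no analogue of \cref{lemma:quantum-product-iteration} is needed here since the Steenrod correlator is a single count rather than a composition, and that the $\mathbb{Z}/p$-isotropy strata cause no trouble because the underlying-curve map ignores the equivariant parameter --- are accurate and consistent with the paper's argument.
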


\begin{proof}
    Recall from \cref{sssec:quantum-Steenrod-definition}, \cref{lem:qst-pseudocycle} that the quantum Steenrod operations are defined from the $p$-pseudocycles from evaluation maps  $\mathrm{ev}^{eq, i}: \ov{\mathcal{M}}^{eq, i}_\beta \to X \times X \times X^p$.
    on the parametrized moduli spaces equipped with $\mu_p$-equivariant inhomogeneous terms. In fact, by \cref{rem:CY-degree}, we may work with a finite-dimensional approximation $E\mu_p^{\mathrm{fin}} := S^{2N+1} \subseteq S^\infty$ for $N \gg 0$ and have the $\mu_p$-equivariant inhomogeneous perturbations to be parametrized over $E\mu_p^{\mathrm{fin}}$:
    \begin{equation}
        \nu^{eq} \in \Gamma ( E\mu_p^{\mathrm{fin}} \times_{\mu_p} \mathbb{P}^1 \times X , \Omega_{\mathbb{P}^1}^{0,1} \boxtimes TX).
    \end{equation}
    We define the corresponding equivariant moduli spaces as
    \begin{equation}
        \mathcal{M}^{eq}_\beta = \mathcal{M}^{eq}(X, J, \nu, \beta) := \left \{ (u: \mathbb{P}^1 \to X, w \in E\mu_p^{\mathrm{fin}}) :\  u_*[\mathbb{P}^1] = \beta, \ \ov{\partial}_J u = \nu^{eq}_w\right\}
    \end{equation}
    together with their stable map compactifications $\ov{\mathcal{M}}^{eq}_{\beta}$ (fiberwise under $\mathcal{M}^{eq}_\beta \to E\mu_p^{\mathrm{fin}}$). The construction of the quantum Steenrod operators applies without change by using these moduli spaces and the induced moduli spaces $\mathcal{M}^{eq, i}_\beta$ (obtained as fiber products with $\ov{\Delta_i}$). In particular, the evaluation map $\mathrm{ev}^{eq, i} : \ov{\mathcal{M}}_\beta^{eq, i} \to X \times X \times X^p$ restricted to the open locus defines a $p$-pseudocycle.

    Now consider the energy bound $E$. Since the inhomogeneous terms $\nu^{eq}_w$ are parametrized over a compact manifold $w \in E\mu_p^{\mathrm{fin}} = S^{2N+1}$, a parametric form of \cref{lemma:nu-cluster-decomposition} applies to the equivariant moduli spaces $\ov{\mathcal{M}}_\beta^{eq, i, E}$ of maps with energy $\omega(\beta) \le E$, yielding the decomposition $\ov{\mathcal{M}}_\beta^{eq, i, E} = \bigcup_\ell (\ov{\mathcal{M}}_\beta^{eq, i, E})_{\mathcal{U}_\ell} = \bigcup_{\ell} c^{-1}(\mathcal{U}_\ell)$ for disjoint open and closed subsets $(\ov{\mathcal{M}}_\beta^{eq, i})_{\mathcal{U}_\ell} $. 
    
    Fix $p$-pseudocycle representatives of $a_1, a_2, b \in H^*(X;\mathbb{F}_p)$ given by $\Psi(a_i) : A_i \to X$ and $\Psi(b) : B \to X$. Applying \cref{lemma:ic-transversality} and \cref{lemma:ic-cluster-decomposition} to the equivariant moduli spaces $\ov{\mathcal{M}}_\beta^{eq, i}$ with incidence constraints
\begin{center}
    \begin{tikzcd}
        & A_1 \times A_2  \times B^{\times p} \dar["{\Psi(a_1 \otimes a_2 \otimes b^{\otimes p}})"] \\ \ov{\mathcal{M}}^{eq, i, E} \rar["\mathrm{ev}"] &  X \times X \times X^p,
    \end{tikzcd}
\end{center}
we see that the ($i$th) quantum Steenrod correlator for $\omega(A) \le E$ admits an induced decomposition
\begin{align}
    \langle a_1, a_2, b^{\otimes p}\rangle_\beta^{eq, i} &:= \mathrm{ev}_* [\ov{\mathcal{M}}^{eq, i}_\beta] \cap \Psi(a_1 \otimes a_2 \otimes b^{\otimes p}) \\
    &=\# (\ov{\mathcal{M}}^{eq,i}_\beta)^E_{a_1, a_2, b, \dots, b} \\
    &= \# \sum_\ell \left( (\ov{\mathcal{M}}^{eq,i}_\beta)^E_{a_1, a_2, b, \dots, b} \right)_{\mathcal{U}_\ell}.
\end{align}
It remains to declare
\begin{equation}
    c_i \langle Q\Sigma_b; a_1, a_2 \rangle^E_{\mathcal{U}_\ell} = \sum_{\beta, \omega(\beta) \le E} (-1)^\dagger \#  \left( (\ov{\mathcal{M}}^{eq,i}_\beta)^E_{a_1, a_2, b, \dots, b} \right)_{\mathcal{U}_\ell} q^\beta \in \Lambda_{\le E}
\end{equation}
to obtain the desired decomposition.
\end{proof}

\subsection{Contributions from elementary clusters}

In the previous section, we showed that both the $p$-curvature endomorphism $\psi_b$ and the quantum Steenrod operator $Q\Sigma_b$ (more precisely, $E$-truncations thereof) admit a cluster decomposition. Here we show that the cluster components of the structure constants agree for a particular type of a cluster that can be identified with the local $\mathbb{P}^1$ geometry. 

Let $(X,\omega)$ be a Calabi--Yau manifold of real dimension $6$.

\begin{defn}[{\cite[Definition 3.1]{IP-GV}, \cite[Definition 3.42]{doan2021gopakumar}}]
    A cluster $(\mathcal{U}, J, C)$ is called \emph{elementary} if
    \begin{enumerate}
        \item The core curve $C \cong \mathbb{P}^1$ with its $J$-holomorphic embedding $u: C \to X$ has a normal bundle $u^*TX/TC =:N_C \cong \mathcal{O}(-1) \oplus \mathcal{O}(-1)$ as almost complex manifolds. In particular, the linearization of $u : C \to X$ uniquely descends to a \emph{normal operator} $D_N:\Gamma(NC) \to \Gamma(\Omega^{0,1}_C \otimes NC)$.
        \item The only non-trivial $J$-holomorphic maps into an $\epsilon$-neighborhood of $C$ are multiple covers of the embedding $C \hookrightarrow X$.
        \item For any such multiple cover $\rho: \widetilde{C} \to C$, the pullback $\rho^* D_N$ is injective.
    \end{enumerate}
\end{defn}
The normal operator $D_N : \Gamma(NC) \to \Gamma(\Omega^{0,1}_C \otimes NC)$ is the restriction of the linearization $D_u$ to the normal directions, see \cite[(4.15)]{IP-GV} or \cite[Section 3.2]{Lee23a}. Property (3) is called the \emph{super-rigidity} of the core curve $C$; it means that there are no (first-order) normal perturbations of the curve, even for any of its multiple covers. It moreover implies that the core $C$ must be transversely cut out.

Our situation is simpler than that of \cite{IP-GV, doan2021gopakumar} in the sense that we only consider rational curves:

\begin{lemma}\label{lem:elementary-cluster-existence}
    There exists an elementary cluster $(\mathcal{U}, J, C)$ whose core curve is $C \cong \mathbb{P}^1$, such that $\mathcal{U} \subseteq X$ is isomorphic as complex manifolds to the total space $\mathcal{O}(-1) \oplus \mathcal{O}(-1) \to C$. In particular, $J$ can be chosen to be integrable near $C$.
\end{lemma}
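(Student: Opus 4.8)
The plan is to establish the existence of an elementary cluster by a direct construction: produce an almost complex structure $J$ on $X$ that is integrable near an embedded rational curve with the prescribed normal bundle, and then check that all three defining properties of an elementary cluster hold. The starting point is that for $J \in \mathcal{J}(X)^E$ with $E$ small, every embedded genus $0$ curve $C$ of minimal energy has normal bundle of degree $\deg N_C = 2c_1(A) - 2 = -2$ by the Calabi--Yau condition and adjunction, so $N_C \cong \mathcal{O}(a) \oplus \mathcal{O}(-2-a)$ for some $a \geq -1$. The generic case $a = -1$, giving $N_C \cong \mathcal{O}(-1) \oplus \mathcal{O}(-1)$, is precisely property (1), and one can arrange this by a generic perturbation of $J$ supported near $C$ (the locus of $J$ with a more unbalanced splitting along an embedded curve is of positive codimension — this is the standard super-rigidity genericity argument, cf.\ \cite{IP-GV}, \cite{Voi96}).

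\textbf{First}, I would fix such a curve $C$ with $u : \mathbb{P}^1 \hookrightarrow X$ an embedding and $N_C \cong \mathcal{O}(-1)^{\oplus 2}$ as a holomorphic (hence almost complex) bundle. By the tubular neighborhood theorem in the almost complex category, a neighborhood of $C$ in $X$ is diffeomorphic to a neighborhood of the zero section in the total space $\mathrm{Tot}(N_C) = \mathrm{Tot}(\mathcal{O}(-1) \oplus \mathcal{O}(-1) \to \mathbb{P}^1)$, and one can modify $J$ on this neighborhood — keeping it fixed on $C$ itself and matching the original $J$ outside a slightly larger neighborhood via a cutoff — so that the new $J$ restricted to a possibly smaller neighborhood $\mathcal{U}_0$ agrees with the standard integrable complex structure $J_0$ of the local $\mathbb{P}^1$. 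This modification changes $J$ only in a small neighborhood of $C$; since the $E$-admissibility conditions in the definition of $\mathcal{J}(X)^E$ are open and are preserved under $C^\infty$-small perturbations of $J$, and the perturbation can be taken $C^\infty$-small away from $C$, the resulting $J$ remains $E$-admissible (for a slightly smaller $E$ if necessary, which is harmless since we are free to shrink $E$).

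\textbf{Next}, with $J$ integrable and equal to $J_0$ on $\mathcal{U}_0$, I would take $\mathcal{U}$ to be the set of compact subsets contained in an $\epsilon$-tubular neighborhood of $C$ inside $\mathcal{U}_0$, for a generic small $\epsilon$, so that $(\mathcal{U}, J, C)$ is an $E$-cluster in the sense of \cref{defn:clusters} (this uses \cite[Lemma 2.3]{IP-GV} as already invoked in the paper). Properties (2) and (3) of an elementary cluster are then precisely the local $\mathbb{P}^1$ geometry facts: by the maximum principle, every non-constant $J_0$-holomorphic sphere in $\mathrm{Tot}(\mathcal{O}(-1)^{\oplus 2} \to \mathbb{P}^1)$ maps into the zero section, hence is a multiple cover of $C \hookrightarrow X$ (property (2)); and the normal operator $D_N$ of $C$ is the $\bar\partial$-operator of $\mathcal{O}(-1) \oplus \mathcal{O}(-1)$, whose pullback under any degree-$k$ branched cover $\rho : \mathbb{P}^1 \to \mathbb{P}^1$ is the $\bar\partial$-operator of $\mathcal{O}(-k)^{\oplus 2}$, which is injective since $H^0(\mathbb{P}^1, \mathcal{O}(-k)) = 0$ for $k \geq 1$ (property (3), super-rigidity) — this last point is exactly the computation underlying \cite{Voi96} and \cref{thm:localP1-qconn}. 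Finally, the curve $C$ being transversely cut out follows from property (3) applied with $k=1$, so $\mathcal{U}$ can also be taken inside a cluster decomposition of $\mathcal{C}(X)^E$ as constructed earlier.

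\textbf{The main obstacle} I anticipate is the genericity step: showing that one may arrange $N_C \cong \mathcal{O}(-1)^{\oplus 2}$ for \emph{every} minimal-energy embedded curve simultaneously while staying $E$-admissible, and more importantly that the modification making $J$ integrable near $C$ does not create new embedded $J$-holomorphic curves of energy $\leq E$ inside $\mathcal{U}$ other than $C$ itself (property (3) of \cref{defn:clusters}), nor destroy the no-intersection property of $E$-admissibility. The cleanest way around this is to note that we do not need the modification to be globally compatible with a fixed cluster decomposition; rather, we fix one curve $C$, perform the local modification, and then re-run the cluster decomposition construction of \cite[Lemma 2.3]{IP-GV} for the \emph{new} $J$ — the only thing to verify is that the new $J$ is still in $\mathcal{J}(X)^{E}$ (for perhaps smaller $E$), which follows from the openness of $E$-admissibility together with the observation that on $\mathcal{U}_0$ the structure $J_0$ itself manifestly satisfies the embeddedness and transversality requirements (by the local $\mathbb{P}^1$ computation) and outside $\mathcal{U}_0$ the structure is a $C^\infty$-small perturbation of the original $E$-admissible $J$.
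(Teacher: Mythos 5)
Your proposal reaches the right conclusion and its overall architecture (make $J$ integrable near an embedded rational curve, then read off properties (2) and (3) from the local $\mathbb{P}^1$ geometry) is sound, but it differs from the paper at the one step that carries the real content: why $N_C \cong \mathcal{O}(-1) \oplus \mathcal{O}(-1)$. You treat the balanced splitting as a genericity statement, to be achieved by a further perturbation of $J$ via a ``super-rigidity''-type codimension count. The paper's proof shows no such perturbation is needed: for any $J$ for which the embedded curve $C$ is regular (which is already part of $E$-admissibility), the normal operator $D_N$ is surjective of index $2(c_1(N_C) + 2) = 0$ by the Calabi--Yau condition, hence an isomorphism; so once an integrable structure is transported to a neighborhood one has $H^0(C, N_C) = 0$, and together with $a + b = \deg N_C = -2$ from adjunction this forces $a, b < 0$, i.e.\ $a = b = -1$. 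This is both shorter and stronger --- it applies to \emph{every} embedded rational curve for \emph{every} $E$-admissible $J$, which is what is actually used later when elementary clusters must be produced around arbitrary core curves in the proof of \cref{thm:main}. A related imprecision in your version: the ``splitting type'' of $N_C$ is not well-defined before $J$ is made integrable (all rank-$2$ complex vector bundles of degree $-2$ on $S^2$ are topologically isomorphic); what genericity or regularity actually controls is $\ker D_N$, and that is precisely the input the paper's index argument exploits. Your verifications of properties (2) and (3) of an elementary cluster (maximum principle, injectivity of $\rho^* D_N$ from $H^0(\mathbb{P}^1, \mathcal{O}(-k)) = 0$) are correct and match the local $\mathbb{P}^1$ discussion of Section 3, though the paper's proof of this particular lemma leaves them implicit.
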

\begin{proof}
    Fix $J$ to be an $\omega$-compatible almost complex structure $J$ such that all simple maps are transversely cut out, and let $u: C \subseteq X$ be a genus $0$ embedded $J$-holomorphic curve. Then $TC$ is a complex subbundle of $(u^*TX, J)$ and the normal bundle $N_C = u^*TX/TC$ is a complex vector bundle on $C \cong \mathbb{P}^1$, hence isomorphic to $\mathcal{O}(a) \oplus \mathcal{O}(b)$ as complex vector bundles. We deform the almost complex structure near $C$ so that it agrees with the natural holomorphic structure on $N_C$ transported from the total space of $\mathcal{O}(a) \oplus \mathcal{O}(b)$. Now note that $a+b = -2$ since $\det(\mathcal{O}(a) \oplus \mathcal{O}(b)) \cong K_C \cong \mathcal{O}(-2)$. Moreover, $H^0(C, N_C) = 0$ since $C$ is regular with index $0$ and therefore should not have any $J$-holomorphic deformations; hence, $a, b < 0$. This implies that $a=b=-1$.
\end{proof}

In light of \cref{lem:elementary-cluster-existence}, we may consider an elementary cluster $(\mathcal{U}, J, C)$ as a codimension $0$ $J$-holomorphic embedding of the $\epsilon$-disk bundle in the total space $Y = \mathrm{Tot}(\mathcal{O}(-1) \oplus \mathcal{O}(-1) \to \mathbb{P}^1)$ of the local $\mathbb{P}^1$-geometry with a tubular neighborhood $N_C \subseteq X$ of the core curve $C$, such that the zero section of $Y$ is identified with the core curve $C$. We will freely use this identification below.

\begin{prop}\label{prop:elementary-cluster-qst-is-pcurv}
    Fix $b \in H^2(X;\mathbb{F}_p)$. Then the cluster components of the structure constants
    \begin{equation}
         c_i \langle Q\Sigma_b ; a_1, a_2 \rangle^E_{\mathcal{U}_\ell} = c_i \langle u^p \psi_b ; a_1, a_2 \rangle^E_{\mathcal{U}_\ell} \in \Lambda_{\le E}
    \end{equation}
    agree when $(\mathcal{U}_\ell, J, C_\ell)$ is an elementary cluster.
\end{prop}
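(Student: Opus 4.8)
The plan is to identify an elementary cluster with a neighborhood of the zero section in the local $\mathbb{P}^1$ geometry $Y = \mathrm{Tot}(\mathcal{O}(-1)\oplus\mathcal{O}(-1)\to\mathbb{P}^1)$ and reduce the statement to \cref{thm:localP1-qst-is-pcurv}. As in the discussion following \cref{lem:elementary-cluster-existence}, fix a biholomorphism $\iota$ from a neighborhood $N$ of the zero section of $Y$ onto a tubular neighborhood of the core $C_\ell$ in $X$ (with respect to the integrable $J$ near $C_\ell$), taking the zero section to $C_\ell$, and take $\mathcal{U}_\ell$ to consist of the compact subsets lying in an $\epsilon$-Hausdorff-neighborhood of $C_\ell$ inside $\iota(N)$. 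Restricting and using that pairing with the zero section identifies $H^2(Y)\cong\mathbb{Z}$, the class $\iota^*b\in H^2(Y)$ equals $k_\ell[F]$ with $[F]$ the fiber class and $k_\ell=b\cdot[C_\ell]$.

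The main point is that the cluster components of both structure constants are computed entirely from data near $C_\ell$ and are therefore transported by $\iota$ to the corresponding local operators for $Y$. For $Q\Sigma_b$ this is immediate from \cref{prop:qst-cluster-decomposition}: the piece $\big((\ov{\mathcal{M}}^{eq,i}_A)^E_{a_1,a_2,b,\dots,b}\big)_{\mathcal{U}_\ell}$ consists of equivariant $(J,\nu^{eq})$-maps whose image is Hausdorff-close to a multiple cover of $C_\ell$, and since $\nu^{eq}$ is $C^k$-small and supported away from the marked points these correspond bijectively, compatibly with all evaluation maps, to the analogous perturbed equivariant maps into $Y$ supported near the zero section; meanwhile the generic incidence $p$-pseudocycles $\Phi(a_1),\Phi(a_2),\Phi(b)^{\times p}$ — transverse to $C_\ell$ by \cref{lemma:ic-transversality} — restrict near $C_\ell$ to $p$-pseudocycles on $N\cong Y$ representing $\iota^*a_1,\iota^*a_2,(\iota^*b)^{\times p}$. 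Transporting the resulting counts identifies $c_i\langle Q\Sigma_b;a_1,a_2\rangle^E_{\mathcal{U}_\ell}$ with the non-constant-curve contribution, truncated to energy $\le E$, to the corresponding structure constant of $Q\Sigma^Y_{\iota^*b}\in\mathrm{End}\big(H^*(Y)\oplus H^*_c(Y)\big)$ (identifying $q^{[C_\ell]}$ with the Novikov variable of $Y$). For $\psi_b$ one argues similarly through \cref{prop:pcurv-cluster-decomposition}: \cref{lemma:quantum-product-cluster-decomposition} and \cref{thm:localP1-qconn} identify $\mathbb{M}_{b,\mathcal{U}_\ell}$ with the non-constant part of $\iota^*b\star^Y_q(\cdot)$ — the nilpotent matrix with single nonzero entry $k_\ell\frac{q}{1-q}$ — and one uses that a generic representative of $b\cup(\cdot)$ preserves the subspace of classes supported near $C_\ell$, so that every mixed classical–quantum monomial in the polynomial expression for $\psi_b$ is localized in one cluster, while monomials mixing quantum factors from two distinct clusters vanish by disjointness of the cores, as in \cref{lemma:quantum-product-iteration}. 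This refines \cref{prop:pcurv-cluster-decomposition} to a decomposition $\psi_b=\psi_b^{\mathrm{cl}}+\sum_\ell\psi_{b,\mathcal{U}_\ell}$ in which, under $\iota$, $\psi_{b,\mathcal{U}_\ell}$ becomes the non-constant part of $\psi^Y_{\iota^*b}$, truncated to energy $\le E$.

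Having written both cluster components as the non-constant parts of the corresponding operators for $Y$, it remains to apply \cref{thm:localP1-qst-is-pcurv}, which gives $Q\Sigma^Y_{\iota^*b}=\psi^Y_{\iota^*b}$; their constant-map parts agree as well — both being $St_p(\iota^*b)=(\iota^*b)^p-t^{p-1}(\iota^*b)$, as one sees by specializing to $q=0$ — so the non-constant parts agree, and pulling back along $\iota$ and truncating to energy $\le E$ gives $c_i\langle Q\Sigma_b;a_1,a_2\rangle^E_{\mathcal{U}_\ell}=c_i\langle\psi_b;a_1,a_2\rangle^E_{\mathcal{U}_\ell}$. Here \cref{thm:localP1-qst-is-pcurv} is used for the divisor $\iota^*b=k_\ell[F]$ directly, or one first reduces to $k_\ell=1$ by Frobenius-linearity of both operators in the divisor insertion, as at the end of the proof of \cref{thm:localP1-qst-is-pcurv}.

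The main obstacle is the second step: making rigorous that the localized, inhomogeneously perturbed moduli problems on $X$ near $C_\ell$, together with their evaluation maps and incidence constraints, are identified with the corresponding problems on $Y$ near the zero section, and that this identification is compatible with the iterated compositions, Novikov derivatives, and $E$-truncations out of which $\psi_b$ and $Q\Sigma_b$ are assembled. The non-compactness of $Y$ — which forces these operations to be read off $H^*(Y)\oplus H^*_c(Y)$, with $\iota^*b\star^Y_q(\cdot)$ a genuine nilpotent endomorphism of it — is exactly what makes the bookkeeping of the classical cup product within the cluster decomposition delicate, so care is needed to assign each mixed monomial to the correct cluster and to avoid double counting.
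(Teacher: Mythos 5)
Your proposal follows the same route as the paper: identify the elementary cluster with a neighborhood of the zero section in the local $\mathbb{P}^1$ geometry $Y$, transport the perturbed moduli problems and incidence pseudocycles along this identification, and invoke \cref{thm:localP1-qst-is-pcurv}. The compatibility with iterated compositions and Novikov derivatives that you flag as ``the main obstacle'' for $\psi_b$ is exactly what \cref{prop:pcurv-cluster-decomposition} already supplies (the structure constants are polynomials $f_i$ in the operators $\mathbb{M}_b^{(j)}$, and each $\mathbb{M}_b^{(j)}$ is replaced by its cluster component), so that part of your worry is already discharged by the results you cite.

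The one step the paper does that you skip, and which you should add, is a preliminary reduction to insertions of degree $|a_1|,|a_2|\le 2$ (the paper's \cref{lem:codim-3}). Your transport argument asserts that $\Phi(a_i)$ restricts to a pseudocycle on $Y$ ``representing $\iota^*a_i$,'' but $H^*(Y;\mathbb{F}_p)$ is concentrated in degrees $0$ and $2$, so for $|a_i|\ge 3$ the restriction is merely nullbordant as a locally finite cycle in the noncompact $Y$ — not empty — and one would still have to argue that the intersection count with the cluster moduli space depends only on this bordism class in $Y$ (with the correct compact-support conventions for the pairing $H^*(Y)\otimes H^*_c(Y)\to\mathbb{F}_p$). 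The paper avoids this entirely with a direct geometric observation: a pseudocycle of codimension $\ge 3$ can be chosen generically to miss the core curve $C_\ell$, hence (for sufficiently small $\nu^{eq}$) to miss every curve contributing to the cluster component, so both $c_i\langle Q\Sigma_b;a_1,a_2\rangle^E_{\mathcal{U}_\ell}$ and $c_i\langle\psi_b;a_1,a_2\rangle^E_{\mathcal{U}_\ell}$ vanish outright in that range. With that reduction in place, your identification only needs to handle degrees $0$ and $2$, where the transported classes land in the ``lower left $2\times 2$ block'' of \cref{eqn:localP1-pcurv-formula} and the comparison is exactly the local computation. This is a fixable omission rather than a wrong turn, but as written the degree-$\ge 3$ case is not covered.
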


We would like to reduce \cref{prop:elementary-cluster-qst-is-pcurv} to the previous claim from \cref{thm:localP1-qst-is-pcurv} about the equivalence
\begin{equation}
    Q\Sigma_e = u^p \psi_e \in \mathrm{End}(H^*(Y) \oplus H^*_c(Y))
\end{equation}
for the local $\mathbb{P}^1$ 3-fold $Y$ with $e \in H^2(Y)$ given by the Poincar\'e dual to the fiber $Y \to \mathbb{P}^1$. We fix the insertion $b \in  H^2(X;\mathbb{F}_p)$ throughout. 

\begin{lemma}\label[lemma]{lem:codim-3}
    Let $(\mathcal{U}_\ell, J, C_\ell)$  be an elementary cluster. Let $a_1, a_2 \in H^*(X;\mathbb{F}_p)$ be cohomology classes, and assume that one of the degrees $|a_1|, |a_2|$ is $\ge 3$.  Then the corresponding cluster components of the structure constants
    \begin{equation}
         c_i \langle Q\Sigma_b ; a_1, a_2 \rangle^E_{\mathcal{U}_\ell} = c_i \langle u^p \psi_b ; a_1, a_2 \rangle^E_{\mathcal{U}_\ell} \in \Lambda_{\le E}
    \end{equation}
    vanish.
\end{lemma}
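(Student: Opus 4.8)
The plan is a dimension count. In both cases the cluster component $c_i\langle\,\cdot\,;a_1,a_2\rangle^E_{\mathcal{U}_\ell}$ is assembled from $(J,\nu)$-holomorphic maps whose underlying curve lies in $\mathcal{U}_\ell$, a small Hausdorff-neighborhood of the core $C_\ell\cong\mathbb{P}^1$; for $J\in\mathcal{J}(X)^E$ such curves are (perturbations of) multiple covers of the embedding $C_\ell\hookrightarrow X$, so their images lie arbitrarily close to the $2$-dimensional submanifold $C_\ell\subseteq X$. In both $Q\Sigma_b$ and $\psi_b$ the classes $a_1$ and $a_2$ enter as incidence constraints at the marked points $z_0$ and $z_\infty$ — for $\psi_b$, at $z_0$ of the innermost and $z_\infty$ of the outermost correspondence factor, cf.\ \cref{lemma:quantum-product-cluster-decomposition} and the proof of \cref{prop:pcurv-cluster-decomposition} — imposed by generic $p$-pseudocycles $\Phi(a_1),\Phi(a_2)$, and both of these marked points evaluate onto $C_\ell$. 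Since a generic pseudocycle of codimension $\ge 3$ in the $6$-manifold $X$ misses the surface $C_\ell$ entirely, such a constraint is never met, and the component vanishes.

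I would make this precise as follows. Assume $|a_1|\ge 3$ (the case $|a_2|\ge 3$ is symmetric, $a_2$ playing the same role as $a_1$). By \cref{lemma:ic-transversality}(1), for $J\in\mathcal{J}(X)^E$ the core $C_\ell$ is an embedded submanifold, so I may take the representative $\Phi(a_1):A_1\to X$ transverse to $C_\ell$. Since $\dim A_1=\dim_{\mathbb{R}}X-|a_1|=6-|a_1|\le 3$ and $\dim C_\ell=2$ with $3+2<6=\dim_{\mathbb{R}}X$, transversality forces $\Phi(a_1)(A_1)\cap C_\ell=\emptyset$; as $\overline{\Phi(a_1)(A_1)}$ and $C_\ell$ are disjoint compacta, $d_0:=d\big(\overline{\Phi(a_1)(A_1)},C_\ell\big)>0$.

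Next, using the constructions behind \cref{lemma:compactness}, \cref{lemma:nu-cluster-decomposition}, \cref{lemma:ic-cluster-decomposition} (and \cref{rem:CY-degree} for the compact parameter space), I would take $\mathcal{U}_\ell$ to be an $\epsilon$-Hausdorff-neighborhood of $C_\ell$ with $\epsilon<d_0$ and $\nu$ (resp.\ $\nu=0$ for $\psi_b$) $C^k$-small enough that every map of energy $\le E$ whose underlying curve lies in $\mathcal{U}_\ell$ has image inside the $\epsilon$-tubular neighborhood $N_\epsilon(C_\ell)\subseteq X$, which is then disjoint from $\Phi(a_1)(A_1)$. The evaluation at the marked point carrying the $a_1$-constraint therefore never hits $\Phi(a_1)(A_1)$, so the relevant fiber product is empty: on the $Q\Sigma_b$ side $\big((\overline{\mathcal{M}}^{eq,i}_A)^E_{a_1,a_2,b,\dots,b}\big)_{\mathcal{U}_\ell}=\emptyset$ for all $A$ with $\omega(A)\le E$, while on the $\psi_b$ side $\mathrm{ev}_*[(\overline{\mathcal{M}}_{A,b})_{\mathcal{U}_\ell}]$ is supported in $C_\ell\times C_\ell$ and hence $\mathbb{M}^{(j),E}_{b,\mathcal{U}_\ell}(a_1)=0$ for all $j$, so every composition term appearing in $c_i\langle\psi_b;a_1,a_2\rangle^E_{\mathcal{U}_\ell}$ vanishes. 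In either case $c_i\langle\,\cdot\,;a_1,a_2\rangle^E_{\mathcal{U}_\ell}=0$.

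The one point to watch — not really an obstacle — is that this computes the cluster component inside an arbitrarily small neighborhood of $C_\ell$, so one must invoke that the cluster component is unchanged under shrinking $\mathcal{U}_\ell$ (while shrinking $\nu$ accordingly) and is independent of the choices of $\nu$ and of the representatives $\Phi(a_i)$. This is the same locality/cobordism input already used in setting up \cref{prop:pcurv-cluster-decomposition} and \cref{prop:qst-cluster-decomposition} and exploited again in \cref{prop:elementary-cluster-qst-is-pcurv}; alternatively one simply fixes, from the outset, a cluster decomposition into $\epsilon$-neighborhoods with $\epsilon$ below the pairwise distances between the core curves $C_\ell$ and the finitely many generic pseudocycle representatives of a chosen basis of $H^*(X;\mathbb{F}_p)$.
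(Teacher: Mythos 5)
Your proposal is correct and follows essentially the same route as the paper's proof: transversality forces the codimension-$\ge 3$ pseudocycle to miss the $2$-dimensional core curve, a small enough perturbation keeps all curves of the cluster in a neighborhood disjoint from it, and the vanishing of $\mathbb{M}^{(j),E}_{b,\mathcal{U}_\ell}(a_1)$ propagates through the compositions defining $\psi_b$. Your version is in fact somewhat more explicit than the paper's about the dimension count and the independence of the cluster component under shrinking $\mathcal{U}_\ell$.
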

\begin{proof}
    Without loss of generality, we consider the class $a_1 \in H^*(X;\mathbb{F}_p)$ and assume that  $|a_1| \ge 3$. Recall that the operator $Q\Sigma_b$ have structure constants given by the intersection counts of evaluation images from moduli spaces of (perturbed) $J$-holomorphic maps and $p$-pseudocycles $\Psi(a_i)$, $\Psi(b)$ representing the cohomology classes $a_i \in H^*(X;\mathbb{F}_p)$, $b \in H^*(X;\mathbb{F}_p)$. The cluster component $ c_i \langle Q\Sigma_b ; a_1, a_2 \rangle^E_{\mathcal{U}_\ell}$ of the quantum Steenrod operator only involves maps whose underlying curve is contained in the cluster $\mathcal{U}_\ell$, hence is $\epsilon$-close to the core curve $C_\ell$ under the Hausdorff topology. Since $|a_1| \ge 3$, the representing pseudocycle $\Psi(a_1)$ is of codimension $\ge 3$ in $X$. By transversality, we may assume that the pseudocycles $\Psi(a_1)$ does not intersect $C_\ell$. Therefore, may choose the perturbation data $\nu^{eq}$ for quantum Steenrod moduli spaces small enough that $\Psi(a_i)$ does not intersect any of the curves from the quantum Steenrod moduli spaces. Hence $c_i \langle Q\Sigma_b ; a_1, a_2 \rangle_{\mathcal{U}_\ell}^E = 0$. By the same argument, we may show that the structure constants of $\mathbb{M}_b$ vanish when $a_1$ is inserted. From the proof of \cref{prop:pcurv-cluster-decomposition} we see that this implies that the structure constants $c_i \langle u^p \psi_b; a_1, a_2 \rangle_{\mathcal{U}_\ell}^E$ vanish.
    
\end{proof}

\begin{proof}[Proof of \cref{prop:elementary-cluster-qst-is-pcurv}]
    By \cref{lem:codim-3}, we may assume that the degrees of the insertions $a_1, a_2$ satisfy $|a_1|, |a_2| \le 2$. We consider $p$-pseudocycles $\Psi(a_1)$, $\Psi(a_2)$, and $\Psi(b)$ representing the cohomology classes $a_1, a_2, b \in H^*(X;\mathbb{F}_p)$. By \cref{lemma:ic-transversality}, we may further assume that these $p$-pseudocycles intersect the core curve $C_\ell$ of the elementary cluster transversely, and in particular the boundary of $p$-pseudocycles are supported away from $C_\ell$.

    For the neighborhood $Y \cong N_{C_\ell} \subseteq X$ of the core curve $C_\ell$, there is a  pullback map in cohomology $H^*(X;\mathbb{F}_p) \to H^*(N_C;\mathbb{F}_p) \cong H^*(Y;\mathbb{F}_p) = H^0(Y;\mathbb{F}_p) \oplus H^2(Y;\mathbb{F}_p)$. We may restrict to a small neighborhood $Y^\epsilon \subseteq Y$ of the zero section $\mathbb{P}^1 \to Y$, which is homotopy equivalent to $Y$, to assume that the image of the pseudocycles $\Psi(a_i), \Psi(b)$ in $Y^\epsilon$ are locally finite cycles in $Y^\epsilon$ representing classes in $H^*(Y^\epsilon;\mathbb{F}_p) \cong H^*(Y;\mathbb{F}_p)$. Let us denote the image of a cohomology class $a \in H^*(X;\mathbb{F}_p)$ under this pullback map by $a' \in H^*(Y;\mathbb{F}_p)$.
    
    Under this pullback map, $b \in H^2(X;\mathbb{F}_p)$ gets sent to $b' = k \cdot e \in H^2(Y;\mathbb{F}_p)$ for some $k \in \mathbb{Z}$ and $e = [\mathrm{fiber}]$ is the class Poincar\'e dual to the fiber of $Y \to \mathbb{P}^1$. 

    It remains to note that the $(J,\nu)$-holomorphic maps to $X$ whose underlying curves are supported in the elementary cluster $\mathcal{U}_\ell$ can be identified with $(J, \nu)$-holomorphic maps to $Y$. Therefore, the structure constants $c_i \langle Q\Sigma_b; a_1, a_2 \rangle^E_{\mathcal{U}_\ell}$ and $c_i \langle u^p \psi_b; a_1, a_2 \rangle^E_{\mathcal{U}_\ell}$ can be identified with ($E$-truncations) of the corresponding structure constants $c_i\langle Q\Sigma_{b'}; a_1', a_2' \rangle^E$ and $c_i\langle u^p  \psi_{b'}; a_1', a_2' \rangle^E$ in $Y$. But \cref{thm:localP1-qst-is-pcurv} implies that the operations $Q\Sigma_{b'}$ and $u^p \psi_{b'}$ agree, so that 
    \begin{equation}
        c_i\langle Q\Sigma_{b'}; a_1', a_2' \rangle^E = c_i\langle u^p \psi_{b'}; a_1', a_2' \rangle^E \in \Lambda_{\le E}
    \end{equation}
    (under our degree constraints $|a_1|, |a_2| \le 2$, these correspond to the matrix coefficients in the ``lower left'' $2 \times 2$ block from \cref{eqn:localP1-pcurv-formula}). 
\end{proof}

\subsection{Cluster isotopy theorem}
The main goal of this subsection is to prove the following statement.

\begin{prop}\label{prop:isotopy}
    Fix $b \in H^2(X;\mathbb{F}_p)$ and cohomology classes $a_1, a_2 \in QH^*(X;\Lambda_{\mathbb{F}_p})[u]\langle \theta \rangle$. For two $E$-clusters $(\mathcal{U}_0, J_0, C)$ and $(\mathcal{U}_1, J_1, C)$ such that $J_0, J_1 \in \mathcal{J}(X)^E$ with identical core curves, there exists a finite set of $E$-clusters $\{ (\mathcal{U}_k, J_k, C_k)\}$ such that 
    \begin{equation}
    \begin{aligned}
    c_i \langle Q\Sigma_b ; a_1, a_2 \rangle^E_{\mathcal{U}_0}  &= \pm c_i \langle Q\Sigma_b ; a_1, a_2 \rangle^E_{\mathcal{U}_1} + \sum_k c_i \langle Q\Sigma_b ; a_1, a_2 \rangle^E_{\mathcal{U}_k}
    \\
    c_i \langle  u^p \psi_b; a_1, a_2 \rangle^E_{\mathcal{U}_0}  &= \pm c_i \langle u^p \psi_b; a_1, a_2 \rangle^E_{\mathcal{U}_1} + \sum_k c_i \langle u^p  \psi_b; a_1, a_2 \rangle^E_{\mathcal{U}_k}\\
    \end{aligned}
    \end{equation}
    where the core curves $C_i$ are embedded and transversely cut out, and its homology class satisfies $[C_i] = d_i [C]$ for some $d_i \geq 2$.
\end{prop}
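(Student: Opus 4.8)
The plan is to deduce the identity from a parametrized cobordism argument for the underlying moduli spaces, following the isotopy analysis of Ionel--Parker \cite[Lemma 2.1]{IP-GV} (see also \cite{doan2021gopakumar}) but carried out for the perturbed, incidence-constrained, and $E\mu_p$-parametrized moduli spaces entering our two operators. First I would reduce both statements to $3$-pointed moduli spaces. For $\psi_b$, \cref{prop:pcurv-cluster-decomposition} writes $c_i\langle\psi_b;a_1,a_2\rangle^E_{\mathcal U}=(f_i(\mathbb M^E_{b,\mathcal U},\mathbb M^{(1),E}_{b,\mathcal U},\dots)(a_1),a_2)$ for universal polynomials $f_i$ built from compositions of operators; since compositions across mutually disjoint clusters vanish (\cref{lemma:quantum-product-iteration}), $f_i$ distributes over any family of pairwise-disjoint clusters, so it suffices to prove the isotopy identity for each $\mathbb M^{(j),E}_b$ --- that is, for the $3$-pointed moduli spaces $\ov{\mathcal M}_{0,3}(X,J)^E_{a_1,a_2,b}$ --- \emph{provided} the clusters produced by the isotopy are pairwise disjoint, which they will be. For $Q\Sigma_b$, \cref{prop:qst-cluster-decomposition} writes the structure constant as a signed count of $\ov{\mathcal M}^{eq,i,E}_A$ with incidence constraints, parametrized over the finite-dimensional $E\mu_p^{\mathrm{fin}}$ of \cref{rem:CY-degree}, and the argument below runs uniformly over this parameter.

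Next I would set up the isotopy. Choose a generic path $(J_t,\nu_t)_{t\in[0,1]}$ joining the given data at the endpoints, with $\nu_t$ an equivariant family over $E\mu_p^{\mathrm{fin}}$ in the quantum Steenrod case, chosen uniformly $C^k$-small so that the parametrized/equivariant forms of \cref{lemma:nu-cluster-decomposition} and \cref{lemma:ic-cluster-decomposition} hold for every $t$, and with generic $p$-pseudocycle representatives $\Phi(a_1),\Phi(a_2),\Phi(b)$ so that the moduli space parametrized over $t\in[0,1]$ with incidence constraints is transversely cut out as a compact $1$-manifold with boundary (the $1$-parameter form of \cref{lemma:ic-transversality}). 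Since $C$ is index $0$ with $H^0(C,N_C)=0$ and is both $J_0$- and $J_1$-holomorphic, I may take the path inside the (connected) space of almost complex structures for which $C$ is holomorphic, so that $C$ remains a transversely cut out embedded curve along the entire path. By Gromov compactness and the energy bound, every $(J_t,\nu_t)$-curve of energy $\le E$ whose image is Hausdorff-close to $C$ represents a non-negative multiple of $[C]$, and because $C$ is transversely cut out, any other embedded curve of class $[C]$ stays a definite Hausdorff distance from $C$, uniformly in $t$. Fix a Hausdorff neighborhood $\mathcal W$ of $C$, shrunk so that $C$ is the only embedded $J_t$-curve of class $[C]$ meeting $\overline{\mathcal W}$ for all $t$, and so that $c_i\langle\,\cdot\,\rangle^E_{\mathcal W}=c_i\langle\,\cdot\,\rangle^E_{\mathcal U_0}$ for $J_0$; for $J_1$, \cref{defn:clusters} decomposes $\mathcal W$ into the cluster $\mathcal U_1$ at $C$ together with finitely many mutually disjoint sub-clusters centered at embedded $J_1$-curves of class $d[C]$ with $d\ge 2$, these cores being transversely cut out since embedded genus $0$ curves of class $d[C]$ form a $0$-dimensional moduli space for generic $J$.

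Finally I would run the wall-crossing. Along the path there are finitely many parameters $0<t_1<\dots<t_m<1$ at which an embedded genus $0$ curve of class $d[C]$, $d\ge 2$, crosses $\partial\mathcal W$ or appears/disappears in $\mathcal W$ --- no class-$[C]$ curve other than $C$ is ever involved, by the choice of $\mathcal W$. On each component of $[0,1]\setminus\{t_1,\dots,t_m\}$ the part of the parametrized moduli space whose underlying curve lies in $\mathcal W$ is a compact $1$-manifold with boundary (the incidence $p$-pseudocycles being transverse to it), whose boundary consists of the signed count at the two ends together with strata of $p$-fold covers; the latter cancel modulo $p$ exactly as in the definition of the correlators, so the $E$-truncated structure constant $c_i\langle\,\cdot\,\rangle^E_{\mathcal W}$ is constant there. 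At each $t_j$ the jump equals, up to an orientation sign, the contribution of the cluster around the crossing curve, which has class $d_j[C]$ with $d_j\ge 2$ --- this is the same local model of a curve entering or leaving a neighborhood as in \cite[Lemma 2.1]{IP-GV}. Telescoping over the $t_j$ and inserting the decomposition of $\mathcal W$ at $J_1$ yields
\[
c_i\langle\,\cdot\,;a_1,a_2\rangle^E_{\mathcal U_0}=\pm\,c_i\langle\,\cdot\,;a_1,a_2\rangle^E_{\mathcal U_1}+\sum_k c_i\langle\,\cdot\,;a_1,a_2\rangle^E_{\mathcal U_k},
\]
the $\{(\mathcal U_k,J_k,C_k)\}$ collecting the wall clusters together with the class-$\ge 2[C]$ clusters present inside $\mathcal W$ for $J_1$, with $J_k$ a small generic perturbation of the relevant $J_{t_j}$ inside $\mathcal J(X)^E$ so that $C_k$ is embedded and transversely cut out; all of these are mutually disjoint, so the $\psi_b$-identity follows from the $\mathbb M^{(j),E}_b$-identity by applying the distributive polynomials $f_i$.

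I expect the main obstacle to be controlling the isotopy near $C$ once the extra structures are present: checking that with $C^\infty$-small inhomogeneous perturbations (supported away from the marked points), incidence $p$-pseudocycles, and the $E\mu_p$-parametrization, the bifurcations of the moduli space near $C$ remain confined to higher-multiplicity ($d\ge 2$) embedded curves, and that Ionel--Parker's local wall-crossing model together with the stability of the cluster decomposition (\cref{lemma:nu-cluster-decomposition}, \cref{lemma:compactness}) go through essentially verbatim --- in particular that the perturbations disturb neither the incidence constraints nor the Gromov-compactness estimates underlying the cluster structure.
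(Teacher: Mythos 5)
Your overall architecture (isotope $J_0$ to $J_1$ through almost complex structures keeping $C$ holomorphic, control everything by cluster stability/refinement, and account for jumps by wall-crossing) matches the paper's, but there is a genuine gap at the central step. You assert that the path can be taken ``inside the (connected) space of almost complex structures for which $C$ is holomorphic, so that $C$ remains a transversely cut out embedded curve along the entire path,'' and consequently that ``any other embedded curve of class $[C]$ stays a definite Hausdorff distance from $C$, uniformly in $t$,'' so that all bifurcations are confined to curves of class $d[C]$ with $d\ge 2$. This cannot be arranged. While $\mathcal{J}_C$ is path-connected, the locus in $\mathcal{J}_C$ where $C$ fails to be transversely cut out is a codimension-one wall ($\mathcal{W}^1$ in the paper's notation, pulled back to the section $t\mapsto (J_t,C)$ of $\mathrm{pr}_{\mathcal{J}}$), and a generic path from $J_0$ to $J_1$ will cross it at finitely many times. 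At such a crossing the deformation operator of $C$ acquires a kernel, the orientation sign of the cluster centered at $C$ can flip, and embedded curves of class $[C]$ \emph{can} be born arbitrarily close to $C$ --- this is exactly the content of the Kuranishi model of \cite[Corollary 6.3]{IP-GV} and of \cref{lemma:tech-3} in the paper, whose proof adapts \cite[Lemma 7.4]{IP-GV}. Your argument, as written, simply assumes this phenomenon away (you flag it as ``the main obstacle'' at the end, but then assert the confinement to $d\ge 2$ rather than proving it), and without it you cannot produce the $\pm$ sign in front of the $\mathcal{U}_1$ term: your telescoping would yield a fixed $+$ sign, which is not what happens.

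Concretely, the paper's proof decomposes the path into (i) segments avoiding the wall $\mathcal{W}$, handled by cluster stability and refinement (\cref{lemma:tech-1}), and (ii) finitely many transverse crossings of $\mathcal{W}^1\setminus\mathcal{A}$ by the pair $(J_t,C)$ itself, handled by the spectral-flow/Kuranishi analysis (\cref{lem:tech-2} and \cref{lemma:tech-3}); the sign $\pm$ in the statement is the product of the orientation flips accumulated at the crossings of type (ii). Your treatment of the $d\ge 2$ error clusters and your reduction of the $\psi_b$-identity to the $3$-pointed correspondences via the distributivity of the polynomials $f_i$ over disjoint clusters are both fine and consistent with the paper; what is missing is the entire analysis of the core curve $C$ degenerating along the isotopy, which is the hardest and most essential part of the argument.
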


To prove Proposition \ref{prop:isotopy}, we follow Ionel--Parker's cluster isotopy method \cite[Section 7]{IP-GV}, which was also used in \cite[Section 3.3]{doan2021gopakumar}. We will focus on explaining how to adapt their argument to our setting and refer the reader to the original reference of the technical details.

To this end, we introduce some notations. To simplify the notations, given two $E$-clusters $(\mathcal{U}_0, J_0, C_0)$ and $(\mathcal{U}_1, J_1, C_1)$ with $[C_0] = [C_1]$, we write
\begin{equation}
    c_i \langle Q\Sigma_b ; a_1, a_2 \rangle^E_{\mathcal{U}_0} \approx c_i \langle Q\Sigma_b ; a_1, a_2 \rangle^E_{\mathcal{U}_1}
\end{equation}
if
\begin{equation}
    c_i \langle Q\Sigma_b ; a_1, a_2 \rangle^E_{\mathcal{U}_0}  = c_i \langle Q\Sigma_b ; a_1, a_2 \rangle^E_{\mathcal{U}_1} + \sum_k c_i \langle Q\Sigma_b ; a_1, a_2 \rangle^E_{\mathcal{U}_k}
\end{equation}
where $\{ (\mathcal{U}_k, J_k, C_k)\}$ consists of $E$-clusters such that $[C_i] = d_i [C]$ for some $d_i \geq 2$. We write
\begin{equation}
    c_i \langle  u^p \psi_b; a_1, a_2 \rangle^E_{\mathcal{U}_0}  \approx  c_i \langle  u^p \psi_b; a_1, a_2 \rangle^E_{\mathcal{U}_1}
\end{equation}
for a similar meaning for $p$-curvature.

Denote by $\mathcal{J}(X)$ the space of $\omega$-compatible almost complex structures on $(X, \omega)$, equipped with $C^\infty$-topology. Write
\begin{equation}
    \mathcal{M}(X)^{\mathrm{emb}} := \{ (J, [u]) \ | \ J \in \mathcal{J}(X), \ u: S^2 \to X \ \text{embdded $J$-holomorphic sphere}\},
\end{equation}
i.e., the universal moduli space of \emph{embedded} genus $0$ $J$-holomorphic curves, where $[u]$ stands for the classes represented by $u$ up to reparametrization. $\mathcal{M}(X)^{\mathrm{emb}}$ is an infinite-dimensional smooth manifold. We equip $\mathcal{M}(X)^{\mathrm{emb}}$ with the product of $C^\infty$-topology and Gromov topology so that the projection map $\mathrm{pr}_{\mathcal{J}}: \mathcal{M}(X)^{\mathrm{emb}} \to \mathcal{J}(X)$ is continuous. For any $\beta \in H_2(X;\mathbb{Z})$, write $\mathcal{M}(X)^{\mathrm{emb}}_\beta$ the component of $\mathcal{M}(X)^{\mathrm{emb}}$ whose underlying curve represents the class $\beta$.

Given any stable $J$-holomorphic map $u: \Sigma \to X$, we denote the deformation operator of $u$ by $D_{u,J}$, whose index computes the virtual dimension of the moduli space. Consider the subspace
\begin{equation}
    \mathcal{W}^k := \{ (J, [u]) \ | \ \dim (\ker(D_{u,J})) = \dim (\mathrm{coker}(D_{u,J})) = k\} \subset \mathcal{M}(X)^{\mathrm{emb}}
\end{equation}
and their union
\begin{equation}
    \mathcal{W} = \bigcup_{k \geq 1} \mathcal{W}^k.
\end{equation}
By \cite[Proposition 5.3]{IP-GV}, $\mathcal{W}^1 \subset \mathcal{M}(X)^{\mathrm{emb}}$ is a codimension $1$ smooth submanifold. By \cite[Lemma 5.6]{IP-GV}, the subset $\mathcal{A} \subset \mathcal{W}^1$ specified by the condition that $\mathrm{pr}_{\mathcal{J}}$ fails to be an immersion, is a codimension $1$ submanifold of $\mathcal{W}^1$.

The first technical lemma towards establishing Proposition \ref{prop:isotopy} is the situation where the two $E$-clusters can be connected without wall-crossing in $\mathcal{J}(X)$.

\begin{lemma}\label{lemma:tech-1}
    Let $(J_t, [u_t])_{0 \leq t \leq 1}$ be a path in $\mathcal{M}(X)^{\mathrm{emb}}_\beta$ in $\mathcal{M}(X)^{\mathrm{emb}} \setminus \mathcal{W}$. Let $C_0$ and $C_1$ be the images of $u_0$ and $u_1$ respectively. If $(\mathcal{U}_0, J_0, C_0)$ and $(\mathcal{U}_1, J_1, C_1)$ are $E$-clusters, then we have
    \begin{equation}
        c_i \langle Q\Sigma_b ; a_1, a_2 \rangle^E_{\mathcal{U}_0} \approx c_i \langle Q\Sigma_b ; a_1, a_2 \rangle^E_{\mathcal{U}_1}, \quad \quad \quad c_i \langle  u^p \psi_b; a_1, a_2 \rangle^E_{\mathcal{U}_0}  \approx c_i \langle  u^p \psi_b; a_1, a_2 \rangle^E_{\mathcal{U}_1}.
    \end{equation}
\end{lemma}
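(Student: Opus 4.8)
The plan is to transplant Ionel--Parker's cluster isotopy argument \cite[Section 7]{IP-GV} (compare \cite[Section 3.3]{doan2021gopakumar}) to the perturbed, incidence-constrained moduli spaces that define our structure constants. Since the path $(J_t,[u_t])$ lies in $\mathcal{M}(X)^{\mathrm{emb}}_A \setminus \mathcal{W}$, the deformation operator $D_{u_t,J_t}$ is an isomorphism for every $t$, so the images $C_t$ form a continuous family of embedded, rigid, transversely cut out genus $0$ $J_t$-holomorphic curves. Fixing a background metric, I would take $\mathcal{U}_t$ to be the Hausdorff $\epsilon$-neighbourhood of $C_t$ in $\mathcal{K}(X)$ for a small generic $\epsilon$. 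Replacing the two given clusters $\mathcal{U}_0,\mathcal{U}_1$ by these $\epsilon$-neighbourhoods of $C_0,C_1$ only changes each structure constant by cluster contributions whose cores have class $d[C]$ with $d\ge 2$ --- namely the finitely many embedded energy-$\le E$ curves lying in the given clusters but $\ne C_i$, which by \cref{lemma:compactness} and Gromov compactness must have class a multiple of $[C]$, and not $[C]$ itself since $D_{u_i,J_i}$ is invertible --- so it is harmless for the statement. After a generic perturbation of the path rel endpoints I may also assume $J_t\in\mathcal{J}(X)^E$ for all but finitely many $t$, folding the exceptional times into the transition times below.

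Next I would isolate the finitely many \emph{transition times} $0<t_1<\cdots<t_m<1$, at which an embedded $J_{t_j}$-holomorphic curve $C_j'$ of energy $\le E$ lies on the boundary $\partial\mathcal{U}_{t_j}$, i.e.\ is at Hausdorff distance exactly $\epsilon$ from $C_{t_j}$; at all other times $(\mathcal{U}_t,J_t,C_t)$ is a genuine $E$-cluster. By the homology argument above, $[C_j']=d_j[C]$ with $d_j\ge 2$; by the codimension estimates for $\mathcal{W}^1$ and $\mathcal{A}$ in \cite[Proposition 5.3, Lemma 5.6]{IP-GV}, after a further generic perturbation of $\epsilon$ and of the path I may assume each $C_j'$ is embedded and transversely cut out, that only one boundary-collision occurs at each $t_j$, and that no collision coincides with a failure time of $\mathcal{J}(X)^E$ or with a non-transverse incidence. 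I would then fix around each $C_j'$ an $E$-cluster $(\mathcal{V}_j,J_{t_j},C_j')$ with $\mathcal{V}_j$ a Hausdorff neighbourhood small enough to be disjoint (as a subset of $\mathcal{K}(X)$) from $\mathcal{U}_t$ for $t$ near $t_j$, so that $c_i\langle Q\Sigma_b;a_1,a_2\rangle^E_{\mathcal{V}_j}$ and $c_i\langle\psi_b;a_1,a_2\rangle^E_{\mathcal{V}_j}$ are defined as in \cref{prop:qst-cluster-decomposition} and \cref{prop:pcurv-cluster-decomposition}.

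Then I would run the cobordism argument subinterval by subinterval. Choosing a continuous family of sufficiently $C^k$-small equivariant inhomogeneous terms $\nu^{eq}_t$ over the compact approximation $E\mu_p^{\mathrm{fin}}$ (cf.\ \cref{rem:CY-degree}) and generic $p$-pseudocycle representatives $\Phi(a_1),\Phi(a_2),\Phi(b)$ transverse to the whole $1$-parameter family, the parametrized moduli space over any closed subinterval of $(t_{j-1},t_j)$,
\[
\bigsqcup_{t}\ \left((\ov{\mathcal{M}}^{eq,i}_A)^E_{a_1,a_2,b,\dots,b}\right)_{\mathcal{U}_t},
\]
is a compact oriented cobordism between its two endpoint fibres --- \cref{lemma:nu-cluster-decomposition} (in its parametric, equivariant form) and \cref{lemma:ic-cluster-decomposition} keep the cluster decomposition valid throughout, no energy-$\le E$ curve meets $\partial\mathcal{U}_t$ in the interior, and the boundary strata of the stable-map compactification are of codimension $\ge 2$ in the Calabi--Yau setting so contribute nothing --- hence $c_i\langle Q\Sigma_b;a_1,a_2\rangle^E_{\mathcal{U}_t}$ is constant on $(t_{j-1},t_j)$. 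As $t$ crosses $t_j$ the curve $C_j'$ together with its cluster $\mathcal{V}_j$ passes from outside $\mathcal{U}_t$ to inside (or conversely), so the two one-sided limits differ by $\pm c_i\langle Q\Sigma_b;a_1,a_2\rangle^E_{\mathcal{V}_j}$; summing over $j$ gives $c_i\langle Q\Sigma_b;a_1,a_2\rangle^E_{\mathcal{U}_0}\approx c_i\langle Q\Sigma_b;a_1,a_2\rangle^E_{\mathcal{U}_1}$. For the $p$-curvature I would run the identical cobordism for the quantum-product cluster components, obtaining $\mathbb{M}_{b,\mathcal{U}_0}^{(j),E}=\mathbb{M}_{b,\mathcal{U}_1}^{(j),E}+\sum_k\mathbb{M}_{b,\mathcal{V}_k}^{(j),E}$, and then substitute into the polynomial identity $\psi_b=\sum_i f_i(\mathbb{M}_b,\mathbb{M}_b^{(1)},\dots)\,t^i$ of \cref{prop:pcurv-cluster-decomposition}: since each $\mathcal{V}_k$ is disjoint from $\mathcal{U}_1$ and from the other $\mathcal{V}_{k'}$, all mixed-cluster compositions vanish by \cref{lemma:quantum-product-iteration} and each $f_i$ is additive across the transition, yielding $c_i\langle\psi_b;a_1,a_2\rangle^E_{\mathcal{U}_0}\approx c_i\langle\psi_b;a_1,a_2\rangle^E_{\mathcal{U}_1}$, with error clusters of core class $d_k[C]$, $d_k\ge 2$.

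The step I expect to be the main obstacle is the bookkeeping at the transition times: checking that the colliding curves $C_j'$ can be taken embedded and transverse of class $d_j[C]$ with $d_j\ge2$, that collisions occur one at a time, and --- crucially --- that each $\mathcal{V}_j$ can be chosen disjoint from $\mathcal{U}_t$ for nearby $t$, which is what makes both the telescoping and, in the $\psi_b$ case, the mixed-cluster vanishing of \cref{lemma:quantum-product-iteration} applicable. This rests entirely on Ionel--Parker's codimension estimates for the wall strata, which I would invoke rather than reprove, together with a parametric transversality count ruling out the bad coincidences (simultaneous collisions, a collision meeting a non-transverse incidence, a collision at a failure time of $\mathcal{J}(X)^E$) for generic choices of path, perturbation, and pseudocycle data.
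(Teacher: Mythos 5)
Your strategy is recognizably the same Ionel--Parker machinery the paper uses, but you organize it as a global wall-crossing argument in $t$ (fix $\epsilon$-neighbourhoods $\mathcal{U}_t$ for all $t$, enumerate boundary-collision times, telescope), whereas the paper argues locally in $t$ and then covers $[0,1]$ by finitely many intervals using compactness: for each $t$ one finds a single cluster $\mathcal{U}_t$ that remains an $E$-cluster for all nearby $s$ (cluster stability, \cite[Lemma 2.1]{IP-GV}), notes that its contribution is deformation-invariant in $s$, and then compares $(\mathcal{U}_t, J_s, C_s)$ with $(\mathcal{U}_s, J_s, C_s)$ via the cluster refinement result \cite[Corollary 2.5]{IP-GV}, which produces exactly the allowed error clusters with cores of class $d[C]$, $d\ge 2$. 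The local formulation sidesteps all of your transition-time bookkeeping, which is where your write-up has real problems.

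Two concrete gaps. First, your treatment of a transition time $t_j$ is internally inconsistent: you require $\mathcal{V}_j$ to be ``disjoint from $\mathcal{U}_t$ for $t$ near $t_j$'' and then assert that ``$\mathcal{V}_j$ passes from outside $\mathcal{U}_t$ to inside.'' Both cannot hold. What actually needs to be shown is that for $t$ on either side of $t_j$ the curves of energy $\le E$ near the colliding curve are \emph{entirely} inside or \emph{entirely} outside $\mathcal{U}_t$, so that the jump is a full cluster contribution; this is precisely the content of the cluster refinement result (comparing two overlapping clusters), not something you get for free from choosing $\mathcal{V}_j$ small. Second, you attribute the finiteness and cleanness of the boundary collisions to the codimension estimates for $\mathcal{W}^1$ and $\mathcal{A}$ from \cite[Proposition 5.3, Lemma 5.6]{IP-GV}. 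Those estimates control degenerations of the deformation operator (the wall $\mathcal{W}$), which is a different phenomenon from an embedded curve reaching Hausdorff distance exactly $\epsilon$ from $C_t$; they say nothing about the latter. Moreover, the hypothesis of the lemma only places the distinguished family $(J_t,[u_t])$ outside $\mathcal{W}$ --- other curves of class $d[C]$, $d \ge 2$, may be born, die, or fail transversality along the path, so your assumption that $J_t\in\mathcal{J}(X)^E$ away from finitely many times is itself unjustified and unnecessary. The cobordism argument on each subinterval and the additive treatment of $\psi_b$ via \cref{lemma:quantum-product-iteration} are fine once the cluster stability and refinement inputs are correctly invoked; I would rewrite the transition-time analysis as a direct application of \cite[Lemma 2.1, Corollary 2.5]{IP-GV} (or \cite[Propositions 3.19--3.20]{doan2021gopakumar}) localized near each $t$, as the paper does.
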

\begin{proof}
    The first input here is the cluster stability result, cf. \cite[Lemma 2.1]{IP-GV} and \cite[Proposition 3.20]{doan2021gopakumar}, which is similar to the Gromov compactness argument in our discussion of small inhomogeneous perturbations. The conclusion of the referenced results implies that for any $t \in [0,1]$, we can find an $E$-cluster $(\mathcal{U}_t, J_t, C_t)$ such that for any $s$ lying in a sufficiently small open neighborhood of $t$, the triple $(\mathcal{U}_t, J_s, C_s)$ is also an $E$-cluster. The contribution of these two $E$-clusters to the truncated generating series is the same, which can also be achieved in our setting with inhomogeneous perturbations if the perturbation is sufficiently small and $s$ is sufficiently close to $t$. The second input here is the cluster refinement result, cf. \cite[Corollary 2.5]{IP-GV} and \cite[Proposition 3.19]{doan2021gopakumar}, which implies that for all such $s$, the difference between $(\mathcal{U}_t, J_s, C_s)$ and $(\mathcal{U}_s, J_s, C_s)$ can be decomposed into a finite union of $E$-clusters whose underlying core curve has homology class $d[C_s]$ for some $d \geq 2$. In particular, using the cluster decomposition statements \cref{prop:pcurv-cluster-decomposition} and \cref{prop:qst-cluster-decomposition}, we conclude that for any $t \in [0,1]$ and $s$ sufficiently close to $t$, we can find $E$-clusters $(\mathcal{U}_t, J_t, C_t)$ and $(\mathcal{U}_s, J_s, C_s)$ such that
    \begin{equation}
        c_i \langle Q\Sigma_b ; a_1, a_2 \rangle^E_{\mathcal{U}_t} \approx c_i \langle Q\Sigma_b ; a_1, a_2 \rangle^E_{\mathcal{U}_s}, \quad \quad \quad c_i \langle  u^p  \psi_b; a_1, a_2 \rangle^E_{\mathcal{U}_t}  \approx c_i \langle u^p  \psi_b; a_1, a_2 \rangle^E_{\mathcal{U}_s}.
    \end{equation}
    Using the compactness and connectedness of $[0,1]$, by covering $[0,1]$ with finitely many intervals with above property, we conclude the Lemma.
\end{proof}

By \cite[Lemma 6.5]{IP-GV}, any path in $\mathcal{J}(X)^E$ whose end points do not lie in the image of $\mathcal{W}$ under $\mathrm{pr}_{\mathcal{J}}$ can be deformed to a path in $\mathcal{J}(X)$ which intersects the wall $\mathcal{W}$ at finitely many points in $\mathcal{W}^1 \setminus \mathcal{A}$ transversely while keeping the end points fixed. Moreover, such path can be chosen so that for the correponding $J$, any simple curve of energy $\leq E$ is embedded and any two $J$-holomorphic simple curves not different by a reparametrization have disjoint images (the set of such $J$ is denoted by $\mathcal{J}^E_{isol}$ in \emph{loc.cit.}). We need to deal with wall-crossings along paths of this kind.

\begin{lemma}\label{lem:tech-2}
    Let $(J_t)_{-1 \leq t \leq 1}$ be a $C^1$-path in $\mathcal{J}(X)$ that is contained in $\mathcal{J}^E_{isol}$. Suppose $\mathrm{pr}_{\mathcal{J}}: \mathcal{M}(X)^{\mathrm{emb}}_A \to \mathcal{J}(X)$ intersects $(J_t)_{-1 \leq t \leq 1}$ transversely at $(J_0, [u_0]) \in \mathcal{W}^1 \setminus \mathcal{A}$. Then there exists $\delta > 0$, $\sigma \in {\pm 1}$ and a neighborhood $U$ of $(J_0, [u_0])$ in $\mathcal{M}(X)^{\mathrm{emb}}$ such that
    \begin{equation}
    \mathrm{pr}_{\mathcal{J}}^{-1}(J_t) \cap U = 
    \begin{cases}
    \{ C_t^{\pm}\}, &\text{for } 0 < |t| < \delta, \mathrm{sign}(t) = \sigma,\\
    \emptyset, &\text{for } 0 < |t| < \delta, \mathrm{sign}(t) = -\sigma.
    \end{cases}
    \end{equation}
    Moreover, if $0 < |t| < \delta$ and $\mathrm{sign}(t)=\sigma$, given two $E$-clusters $(\mathcal{U}_t^-, J_t, C_t^-)$ and $(\mathcal{U}_t^+, J_t, C_t^+)$, we have
    \begin{equation}
        c_i \langle Q\Sigma_b ; a_1, a_2 \rangle^E_{\mathcal{U}_t^+} \approx - c_i \langle Q\Sigma_b ; a_1, a_2 \rangle^E_{\mathcal{U}_t^-}, \quad \quad \quad c_i \langle  u^p \psi_b; a_1, a_2 \rangle^E_{\mathcal{U}_t^+}  \approx - c_i \langle u^p  \psi_b; a_1, a_2 \rangle^E_{\mathcal{U}_t^-}.
    \end{equation}
\end{lemma}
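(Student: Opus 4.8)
The plan is to transplant Ionel--Parker's wall-crossing argument \cite[Sections 5--7]{IP-GV} (see also \cite[Section 3.3]{doan2021gopakumar}) into the present framework and feed it into the cluster decompositions of structure constants from \cref{prop:pcurv-cluster-decomposition} and \cref{prop:qst-cluster-decomposition}. By $q^A$-, $t$- and $\theta$-linearity we may take $a_1,a_2\in H^*(X;\mathbb{F}_p)$. Both the $Q\Sigma_b$- and $\psi_b$-assertions will be deduced from one device: a cobordism argument over an \emph{enlarged} region $\mathcal{U}_0$ around the degenerating curve, applied directly to the $\mu_p$-equivariant Steenrod moduli for $Q\Sigma_b$, and to the cluster-restricted quantum products $\mathbb{M}_{b,\mathcal{U}_0}^{(j),E}$ for $\psi_b$ --- recall from the proof of \cref{prop:pcurv-cluster-decomposition} that $c_i\langle\psi_b;a_1,a_2\rangle^E_{\mathcal{U}}$ is a fixed polynomial expression $\bigl(f_i(\mathbb{M}_{b,\mathcal{U}}^E,\mathbb{M}_{b,\mathcal{U}}^{(1),E},\dots)(a_1),a_2\bigr)$, with cross-cluster compositions vanishing by \cref{lemma:quantum-product-iteration}.

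\emph{Local fold normal form.} Since $(J_0,[u_0])\in\mathcal{W}^1\setminus\mathcal{A}$, the operator $D_{u_0,J_0}$ has one-dimensional kernel and cokernel and $\mathrm{pr}_{\mathcal{J}}|_{\mathcal{W}^1}$ is an immersion at this point. A Lyapunov--Schmidt reduction along the $1$-dimensional kernel and cokernel presents $\mathrm{pr}_{\mathcal{J}}$ near $(J_0,[u_0])$ as a finite-dimensional Fredholm map with a single degenerate direction, which the defining conditions of $\mathcal{W}^1$ and of the complement of $\mathcal{A}$ force to be a Whitney fold with fold locus $\mathcal{W}^1$, as in \cite[Section 7]{IP-GV}. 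Transversality of the $C^1$-arc $(J_t)$ to $\mathrm{pr}_{\mathcal{J}}$ at $(J_0,[u_0])$ makes $(J_t)_{|t|<\delta}$ a generic line crossing the fold once (after shrinking $\delta$ so that $(J_t,[u_t])\in\mathcal{W}$ only at $t=0$), which gives the first conclusion: for a sign $\sigma\in\{\pm1\}$ and $\delta>0$, $\mathrm{pr}_{\mathcal{J}}^{-1}(J_t)\cap U=\{C_t^+,C_t^-\}$ when $0<|t|<\delta$ and $\mathrm{sign}(t)=\sigma$, and $=\emptyset$ when $\mathrm{sign}(t)=-\sigma$, while $(J_t,[C_t^\pm])\notin\mathcal{W}$ for $0<|t|<\delta$ so that $C_t^\pm$ are embedded and transversely cut out. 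In the fold model the two sheets carry opposite orientations, which is the geometric origin of the minus sign in the conclusion.

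\emph{Enlarged cluster and cobordism.} Fix a small generic $\epsilon>0$ and let $\mathcal{U}_0\subseteq\mathcal{K}(X)$ be the $\epsilon$-Hausdorff-neighborhood of the image curve of $u_0$. Using \cref{defn:clusters}(3)--(4) at $t=0$, the fold structure above, and the parametrized Gromov-compactness and cluster-stability arguments behind \cref{lemma:nu-cluster-decomposition} and \cite[Lemma 2.1]{IP-GV} (run over $t$, and, for $Q\Sigma_b$, simultaneously over the compact parameter $E\mu_p^{\mathrm{fin}}$ of \cref{rem:CY-degree}), one may shrink $\epsilon$ and $\delta$ so that for all $|t|\le\delta$ and all sufficiently $C^k$-small $\nu$, $\nu^{eq}$: no embedded genus $0$ $J_t$-holomorphic curve of class $A$ of energy $\le E$ touches $\partial\mathcal{U}_0$, and every $(J_t,\nu)$-holomorphic stable map of energy $\le E$ with image in $\mathcal{U}_0$ has class $kA$, $k\ge1$. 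Hence the locus of maps whose underlying curve lies in $\mathcal{U}_0$ is open and closed in the $E$-truncated moduli spaces, uniformly in $t$, and defines $E$-truncated contributions $c_i\langle Q\Sigma_b;a_1,a_2\rangle^E_{\mathcal{U}_0}$ and operators $\mathbb{M}_{b,\mathcal{U}_0}^{(j),E}$ exactly as in \cref{prop:qst-cluster-decomposition} and \cref{lemma:quantum-product-cluster-decomposition}. These $\mathbb{F}_p$-valued data are \emph{independent of $t$} for $|t|\le\delta$: the total space over $t\in[-\delta,\delta]$ of the corresponding moduli spaces, cut down by the incidence pseudocycles $\Phi(a_1),\Phi(a_2)$ and, for $Q\Sigma_b$, by $\Phi(b^{\otimes p})$ and $\ov{\Delta}_i$, is a compact oriented $1$-manifold whose boundary lies over $t=\pm\delta$ --- the transversality package of Section 2 excludes contributions from the $\Omega$-sets and the boundaries of the pseudocycle domains, and for $i$ even the boundary $\partial\ov{\Delta}_i$ is a free $\mu_p$-orbit and so contributes $0$ modulo $p$ --- so the signed boundary count vanishes in $\mathbb{F}_p$ and the counts at $t=+\delta$ and $t=-\delta$ coincide.

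\emph{Conclusion and main obstacle.} For $\mathrm{sign}(t)=-\sigma$ the region $\mathcal{U}_0$ contains no embedded class-$A$ curve, so cluster refinement (\cite[Corollary 2.5]{IP-GV}, \cite[Proposition 3.19]{doan2021gopakumar}) together with \cref{prop:pcurv-cluster-decomposition}, \cref{prop:qst-cluster-decomposition} and \cref{lemma:quantum-product-iteration} expresses $c_i\langle Q\Sigma_b;a_1,a_2\rangle^E_{\mathcal{U}_0}$ and $c_i\langle\psi_b;a_1,a_2\rangle^E_{\mathcal{U}_0}$ as finite sums of cluster contributions with cores of class $d[C]$, $d\ge2$; for $\mathrm{sign}(t)=\sigma$ the same refinement, applied now to the two embedded class-$A$ curves $C_t^\pm$, gives $c_i\langle\,\cdot\,\rangle^E_{\mathcal{U}_0}=c_i\langle\,\cdot\,\rangle^E_{\mathcal{U}_t^+}+c_i\langle\,\cdot\,\rangle^E_{\mathcal{U}_t^-}+(\text{cluster contributions with core }d[C],\ d\ge2)$. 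Comparing the two expansions through the $t$-independence established above yields $c_i\langle Q\Sigma_b;a_1,a_2\rangle^E_{\mathcal{U}_t^+}+c_i\langle Q\Sigma_b;a_1,a_2\rangle^E_{\mathcal{U}_t^-}\approx0$, and likewise for $\psi_b$, which are the claimed relations. The genuinely new point --- and where I expect the most care to be needed --- is making this cobordism valid for the \emph{perturbed} and \emph{$\mu_p$-equivariant} moduli spaces: one must produce a single family of inhomogeneous terms small enough, uniformly over $t\in[-\delta,\delta]$ and over $E\mu_p^{\mathrm{fin}}$, so that \cref{lemma:compactness}, \cref{lemma:nu-cluster-decomposition} and the transversality of \cref{lemma:ic-transversality} hold for all $t$ at once, and then check that the resulting space is a $1$-manifold with boundary only over $t=\pm\delta$ plus the mod-$p$-negligible $\partial\ov{\Delta}_i$ stratum, exactly as in the fixed-$J$ discussion of Section 2. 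The fold normal form, cluster stability, and cluster refinement are imported verbatim from \cite[Sections 2, 5--7]{IP-GV} and \cite[Section 3.3]{doan2021gopakumar}.
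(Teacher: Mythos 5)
Your proposal follows essentially the same route as the paper's proof: the first assertion is the fold/Kuranishi normal form of \cite[Corollary 6.3]{IP-GV}, and the second is obtained by fixing one enlarged cluster $\mathcal{U}_0$ around $C_0$ whose $E$-truncated contribution is well-defined and $t$-independent for $|t|\le\delta$ (the paper invokes deformation invariance where you spell out the cobordism over $[-\delta,\delta]$), then comparing its cluster refinement on the side $\mathrm{sign}(t)=\sigma$ (giving $\mathcal{U}_t^++\mathcal{U}_t^-$ plus cores of class $d[C]$, $d\ge2$) with its vanishing up to such errors on the side $\mathrm{sign}(t)=-\sigma$. The argument is correct and the uniform-smallness issue you flag for the perturbations is exactly the point the paper also defers to the cluster-stability discussion in its Lemma on paths avoiding the wall.
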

\begin{proof}
    The first part of the statement is the delicate Kuranishi model analysis presented in \cite[Corollary 6.3]{IP-GV}. As for the consequence on enumerative invariants, it follows from the following two observations. First, if $C_0$ denotes the underlying embedded $J$-holomorphic curve of $u_0$, we can find an $E$-cluster $(\mathcal{U}_0, J_0, C_0)$ for $C_0$ such that for any $s$ sufficiently close to $0$, the generating series
    \begin{equation}
        c_i \langle Q\Sigma_b ; a_1, a_2 \rangle^E_{\mathcal{U}_s}, \quad \quad \quad c_i \langle u^p \psi_b; a_1, a_2 \rangle^E_{\mathcal{U}_s},
    \end{equation}
    which are defined with respect to the almost complex structure $J_s$ by looking at ($\nu$-perturbed) $J_s$-holomorphic curves mapped into $\mathcal{U}_0$, are well-defined: cf. the cluster stability result discussed in the proof of Lemma \ref{lemma:tech-1}. Additionally, the resulting series are independent of $s$, which is a consequence of the deformation invariance. Second, for $\mathrm{sign}(t) = \sigma$, if $|s| < \delta$ the cluster decomposition applied to $\mathcal{U}_s$ implies that
    \begin{equation}
    \begin{aligned}
        c_i \langle Q\Sigma_b ; a_1, a_2 \rangle^E_{\mathcal{U}_s} &\approx c_i \langle Q\Sigma_b ; a_1, a_2 \rangle^E_{\mathcal{U}_t^+} + c_i \langle Q\Sigma_b ; a_1, a_2 \rangle^E_{\mathcal{U}_t^-}, \\ 
        c_i \langle u^p \psi_b; a_1, a_2 \rangle^E_{\mathcal{U}_s} &\approx c_i \langle u^p \psi_b; a_1, a_2 \rangle^E_{\mathcal{U}_t^+}  +c_i  \langle u^p \psi_b; a_1, a_2 \rangle^E_{\mathcal{U}_t^-},
    \end{aligned}
    \end{equation}
    by \cref{prop:pcurv-cluster-decomposition} and \cref{prop:qst-cluster-decomposition}. Therefore, the computations that $c_i \langle Q\Sigma_b ; a_1, a_2 \rangle^E_{\mathcal{U}_s} \approx 0$ and $c_i \langle u^p \psi_b; a_1, a_2 \rangle^E_{\mathcal{U}_s} \approx 0$ for $\mathrm{sign}(s) = - \sigma$ show the Lemma holds.
\end{proof}

Finally, we need the following lemma which is concerned with isotopies of almost complex structures keeping the core curve pseudoholomorphic. This is needed for comparing cluster contributions with the ones from the standard (elementary) cluster. To set up the stage, we introduce some further notations. Given an embedded closed $2$-dimensional submanifold $C \subset X$, write $\mathcal{J}_C \subset \mathcal{J}(X)$ is subset consisting of all $J$ for which $C$ is $J$-holomorphic. Recall the definition of $\mathcal{J}^E_{isol}$ proceeding Lemma \ref{lem:tech-2}.

\begin{lemma}\label{lemma:tech-3}
    Let $(J_t)_{-1 \leq t \leq 1}$ be a $C^1$-path in $\mathcal{J}_C \cap \mathcal{J}(X)$ that is contained in $\mathcal{J}^E_{isol}$. Suppose $\mathrm{pr}_{\mathcal{J}}: \mathcal{M}(X)^{\mathrm{emb}}_A \to \mathcal{J}(X)$ intersects $(J_t)_{-1 \leq t \leq 1}$ transversely at $(J_0, [u_0]) \in \mathcal{W}^1 \setminus \mathcal{A}$ where $C$ is the underlying curve associated to $u_0$. Then there exists a $\delta > 0$ such that if $(\mathcal{U}_+, J_{\delta}, C)$ and $(\mathcal{U}_-, J_{-\delta}, C)$ are $E$-clusters, then
    \begin{equation}
        c_i \langle Q\Sigma_b ; a_1, a_2 \rangle^E_{\mathcal{U}_+} \approx - c_i \langle Q\Sigma_b ; a_1, a_2 \rangle^E_{\mathcal{U}_-}, \quad \quad \quad c_i \langle u^p \psi_b; a_1, a_2 \rangle^E_{\mathcal{U}_+}  \approx - c_i \langle u^p \psi_b; a_1, a_2 \rangle^E_{\mathcal{U}_-}.
    \end{equation}
\end{lemma}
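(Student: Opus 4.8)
The plan is to run the Kuranishi-model wall-crossing analysis underlying \cref{lem:tech-2}, but specialized to the feature that here the core curve $C$ persists along the \emph{entire} path, since the hypothesis forces $(J_t)_{-1\le t\le 1}\subseteq\mathcal{J}_C$. As a preliminary step I would shrink $\delta$ exactly as in the proof of \cref{lemma:tech-1}: choose $\delta>0$ so that $(J_t)_{|t|\le\delta}$ meets the wall $\mathcal{W}$ only at $t=0$, and so that, by cluster stability (\cite[Lemma 2.1]{IP-GV}, \cite[Proposition 3.20]{doan2021gopakumar}) together with \cref{lemma:nu-cluster-decomposition} and \cref{lemma:ic-transversality}, for all $0<|s|\le\delta$ a fixed $\epsilon$-neighborhood of $C$ in $\mathcal{K}$ is an $E$-cluster for $J_s$ whose structure constants are independent of $s$ on each side of $0$. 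This makes $c_i\langle Q\Sigma_b;a_1,a_2\rangle^E_{\mathcal{U}_\pm}$ and $c_i\langle\psi_b;a_1,a_2\rangle^E_{\mathcal{U}_\pm}$ well defined, and by \cref{prop:qst-cluster-decomposition} and \cref{prop:pcurv-cluster-decomposition} they are assembled from counts of $(J_{\pm\delta},\nu)$-holomorphic maps supported near $C$ — for the $p$-curvature, from the cluster components of $\mathbb{M}_b$ and its Novikov derivatives.

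Next I would analyze the crossing at $t=0$. Since $J_t\in\mathcal{J}_C$, the assignment $t\mapsto(J_t,[u_t])$ with $u_t$ a fixed parametrization of $C$ is a path in $\mathcal{M}(X)^{\mathrm{emb}}_A$ through $(J_0,[u_0])\in\mathcal{W}^1\setminus\mathcal{A}$; since $u_0$ is embedded and transversely cut out \emph{as a curve}, the one-dimensional kernel and cokernel at the wall are carried by the normal operator $D_N$, and the transversality hypothesis says precisely that a simple eigenvalue of the self-adjoint model of the family of normal operators crosses zero transversely at $t=0$. Feeding this into the Kuranishi model of \cite[Section 6]{IP-GV} (in particular \cite[Corollary 6.3]{IP-GV}), the local picture near $[u_0]$ is that $C$ belongs to $\mathrm{pr}_{\mathcal{J}}^{-1}(J_t)$ for all small $t$ — this is the feature special to paths in $\mathcal{J}_C$, in place of the birth/death of a pair in \cref{lem:tech-2} — while the orientation with which $C$ and each of its multiple covers contributes to the relevant moduli counts is reversed as $t$ passes through $0$, this reversal being the determinant-line flip that IP extract from an odd spectral flow along $\mathcal{W}^1\setminus\mathcal{A}$. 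For the $p$-curvature this orientation reversal is inherited formally from the cluster components of $\mathbb{M}_b$ and $\mathbb{M}_b^{(j)}$ through \cref{prop:pcurv-cluster-decomposition}. For $Q\Sigma_b$ one uses that $\nu^{eq}$ is $C^\infty$-small and supported away from the marked points, so that the underlying-curve map $c$ of \cref{lemma:nu-cluster-decomposition} — and hence the cluster structure and the sign of each cluster's contribution — is dictated by the unperturbed $J_t$-curves, so the same flip governs the sign of $c_i\langle Q\Sigma_b;a_1,a_2\rangle^E_{\mathcal{U}_\pm}$. Any embedded curves of class $d[C]$ with $d\ge2$ created or destroyed near $C$ as $t$ crosses $0$ are absorbed into finitely many $E$-clusters with cores of class $d[C]$ by the cluster refinement result (\cite[Corollary 2.5]{IP-GV}, \cite[Proposition 3.19]{doan2021gopakumar}), exactly as in \cref{lemma:tech-1}; these are the error terms hidden in the symbol $\approx$. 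Combining the orientation reversal with the cluster decompositions \cref{prop:pcurv-cluster-decomposition} and \cref{prop:qst-cluster-decomposition} then gives $c_i\langle Q\Sigma_b;a_1,a_2\rangle^E_{\mathcal{U}_+}\approx -\,c_i\langle Q\Sigma_b;a_1,a_2\rangle^E_{\mathcal{U}_-}$ and likewise for $\psi_b$.

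I expect the main obstacle to be the second step: transporting the Kuranishi-model computation of \cite[Section 6]{IP-GV} to the present setting, where the core curve stays holomorphic throughout — so the local model is ``$C$ persists, its sign flips'' rather than ``a pair of embedded curves is born or dies'' — and where the enumerative quantities being tracked are not virtual counts of embedded curves but the structure constants of $Q\Sigma_b$ and $\psi_b$, built from inhomogeneously perturbed and $\mu_p$-equivariant moduli spaces with incidence constraints. Concretely, the crux is to verify that IP's determinant-line orientation reversal survives both the passage to small inhomogeneous/equivariant perturbations and, via \cref{prop:pcurv-cluster-decomposition}, the passage to the $p$-curvature; once that is in hand, everything else is bookkeeping with the cluster stability and refinement statements already invoked in \cref{lemma:tech-1}.
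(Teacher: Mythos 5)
Your overall architecture (shrink $\delta$ using cluster stability, invoke the cluster decompositions of \cref{prop:pcurv-cluster-decomposition} and \cref{prop:qst-cluster-decomposition}, reduce to a local Kuranishi analysis at the single crossing, and push the sign through deformation invariance) is the same as the paper's, which simply defers to the proof of \cite[Lemma 7.4]{IP-GV} built on \cite[Corollary 6.3, Eq.\ (6.6)]{IP-GV}. The genuine gap is in your local model at the wall. You assert that the picture is ``$C$ persists and its sign flips,'' explicitly \emph{in place of} the birth/death of \cref{lem:tech-2}, and that the only other curves created or destroyed lie in classes $d[C]$ with $d\ge 2$. That cannot be the whole story: the signed count of embedded curves in the class $[C]$ inside a fixed neighborhood $\mathcal{V}$ of $C$ (chosen so that no curve meets $\partial\mathcal{V}$ for $|t|\le\delta$) is a deformation invariant, and it receives contributions only from embedded curves of class exactly $[C]$. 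If $C$ were the unique such curve on both sides and its sign flipped, this invariant would jump by $2$, a contradiction. Concretely, for a path constrained to $\mathcal{J}_C$ the obstruction function of \cite[Eq.\ (6.6)]{IP-GV} has identically vanishing constant term, so it factors as $x\cdot g(t,x)$; besides the branch $x=0$ (the persistent curve $C$) there is a second branch of embedded curves $C'_t$ in the \emph{same} class $[C]$, Hausdorff-converging to $C$ as $t\to 0$. These curves are excluded from the clusters $\mathcal{U}_\pm$ by Definition of an $E$-cluster, and their contributions are \emph{not} absorbable into the $d\ge 2$ error terms permitted by the symbol $\approx$.

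Accounting for the clusters around these extra class-$[C]$ curves is exactly the nontrivial bookkeeping of \cite[Lemma 7.4]{IP-GV}, and it is why the paper's proof states that \cref{lem:tech-2} must be used ``in the process'': one compares the contribution of a large neighborhood containing both $C$ and $C'_t$ across $t=0$ by deformation invariance, decomposes it into the clusters around $C$ and around $C'_t$ plus $d\ge2$ errors, and controls the $C'_t$-terms using the wall-crossing cancellation of \cref{lem:tech-2}. Your proposal discards this step entirely, so the claimed relation $c_i\langle\,\cdot\,\rangle^E_{\mathcal{U}_+}\approx -\,c_i\langle\,\cdot\,\rangle^E_{\mathcal{U}_-}$ does not follow from what you have written. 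A secondary, smaller issue: the sign of a cluster's contribution to $Q\Sigma_b$ is not literally the determinant-line sign of $C$ propagated through the perturbed, equivariant, constrained moduli problem; in the paper it is extracted indirectly, via deformation invariance of the contribution of a fixed neighborhood together with the cluster decomposition, rather than by a direct spectral-flow argument on the perturbed problem.
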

\begin{proof}
    Again, the most important technical input is the Kuranishi model discussed in \cite[Corollary 6.3]{IP-GV}, more specifically, Equation (6.6) in \emph{loc.cit.}. Given the cluster decomposition results \cref{prop:pcurv-cluster-decomposition} and \cref{prop:qst-cluster-decomposition} and deformation invariance of our invariants, using arguments similar to those in Lemma \ref{lemma:tech-1} and Lemma \ref{lem:tech-2}, one can replicate the proof of \cite[Lemma 7.4]{IP-GV} to deduce the desired result, where we need to use Lemma \ref{lem:tech-2} in the process.
\end{proof}

\begin{proof}[Proof of Proposition \ref{prop:isotopy}]
    We consider the space $\mathcal{J}_C$, which is path-connected. Using \cite[Lemma 6.7]{IP-GV}, we can find a path in $\mathcal{J}_C$ connecting $J_0$ and $J_1$ such that the path $(J_t, C)_{0 \leq t \leq 1}$ in $\mathcal{M}(X)^{\mathrm{emb}}_{[C]}$ intersects $\mathcal{W}$ transversely at finitely many points in $\mathcal{W}^1 \setminus \mathcal{A}$ at times $\{t_i\}$, such that the subset $\mathrm{pr}_{\mathcal{J}}^{-1}([0,1] \setminus \{t_i\})$ is a smooth $1$-dimensional manifold intersecting the wall $\mathcal{W}$ transversely at finitely many points in $\mathcal{W}^1 \setminus \mathcal{A}$. Then such a path necessarily lies in $\mathcal{J}^E_{isol}$. Therefore, the Proposition follows from combining Lemma \ref{lemma:tech-1} and Lemma \ref{lemma:tech-3}, applied to the complement of $\{t_i\}$ and the wall-crossing respectively.
\end{proof}



\subsection{Final proof}
We explain how to deduce \cref{thm:main} using the technical results above.




\begin{proof}[Proof of \cref{thm:main}]
    Let $b \in \mathrm{im}(H^2(X;\mathbb{Z}) \to H^2(X;\mathbb{F}_p))$. Then the total Steenrod power of $b \in H^2(X;\mathbb{F}_p)$ is given by
    \begin{equation}
        \mathrm{St}(b) = b^p - u^{p-1}b \in H^{2p}(X;\mathbb{F}_p)[u]\langle \theta \rangle,
    \end{equation}
    since the Bockstein $\beta$ vanishes on $b$. Since $\mathrm{St}(b) \smile = Q\Sigma_b |_{q^\beta =0}$ under the projection $(-)|_{q^\beta=0} : \Lambda \to \Lambda_{\le 0}$ by the general property of the quantum Steenrod operations \cite[Proposition 2.21]{Lee23a}, and $( b^p - u^{p-1} b) \smile = u^p \psi_b |_{q^\beta =0}$, we conclude that the classical terms of the two operations agree,  $Q\Sigma_b |_{q^\beta =0} = u^p  \psi_b |_{q^\beta =0}$. It suffices to consider the non-classical terms (i.e. terms with $q^\beta$ with $\omega(\beta) > 0$) in both operations.

    Fix an energy bound $E> 0$, an almost complex structure $J \in \mathcal{J}(X)^E$ and a finite cluster decomposition $\mathcal{C}(X)^E = \bigcup_{\ell} \mathcal{C}(X)^E_{\mathcal{U}_\ell}$. It suffices to show that for a fixed cluster $\mathcal{U}_\ell$, there is an equality
    \begin{equation}
        c_i \langle Q\Sigma_b ; a_1, a_2 \rangle^E_{\mathcal{U}_{\ell}} =     c_i \langle u^p \psi_b ; a_1, a_2 \rangle^E_{\mathcal{U}_{\ell}} \in \Lambda_{\le E}
    \end{equation}
    for all $i$; then we can take $E \to \infty$. 

    By cluster decomposition, the monomial terms in $q^\beta$ in both structure constants $c_i \langle Q\Sigma_b ; a_1, a_2 \rangle^E_{\mathcal{U}_{\ell}} =     c_i \langle u^p \psi_b ; a_1, a_2 \rangle^E_{\mathcal{U}_{\ell}}$ have the form $\beta = k [C_\ell]$ for some $k \ge 1$.

    Now fix $J' \in \mathcal{J}(X)^E$ such that the core curve $C_\ell$ is $J'$-holomorphic and moreover the cluster $(\mathcal{U}_{\mathrm{elem}}, J', C_\ell)$ is an elementary cluster; this can be achieved by taking the elementary cluster around $C_\ell$ and perturbing it away from $C_\ell$ to achieve the transversality condition $J' \in \mathcal{J}(X)^E$ (cf. \cite[Theorem 7.1]{IP-GV}. By the cluster isotopy theorem (Proposition \ref{prop:isotopy}), we have 

    \begin{equation}\label{eqn:cluster-isotopy}
    \begin{aligned}
    c_i \langle Q\Sigma_b ; a_1, a_2 \rangle^E_{\mathcal{U}_\ell}  &= \pm c_i \langle Q\Sigma_b ; a_1, a_2 \rangle^E_{\mathcal{U}_{\mathrm{elem}}} + \sum_k c_i \langle Q\Sigma_b ; a_1, a_2 \rangle^E_{\mathcal{U}_k}
    \\
    c_i \langle u^p \psi_b; a_1, a_2 \rangle^E_{\mathcal{U}_\ell}  &= \pm c_i \langle u^p \psi_b; a_1, a_2 \rangle^E_{\mathcal{U}_{\mathrm{elem}}} + \sum_k c_i \langle u^p \psi_b; a_1, a_2 \rangle^E_{\mathcal{U}_k}\\
    \end{aligned}
    \end{equation}
    where $(\mathcal{U}_k, J_k, C_k)$ are $E$-clusters with $[C_k] = d_k [C_\ell] \in H_2(X;\mathbb{Z})$ for $d_k \ge 2$. By the local computation (Proposition \ref{prop:elementary-cluster-qst-is-pcurv}) for elementary clusters, we know that 
    \begin{equation}
        c_i \langle Q\Sigma_b ; a_1, a_2 \rangle^E_{\mathcal{U}_{\mathrm{elem}}} = c_i \langle u^p \psi_b ; a_1, a_2 \rangle^E_{\mathcal{U}_{\mathrm{elem}}}.
    \end{equation}
    For each of the clusters $(\mathcal{U}_k, J_k, C_k)$ in the error terms $c_i \langle Q\Sigma_b ; a_1, a_2 \rangle^E_{\mathcal{U}_k}$ and $c_i \langle u^p \psi_b ; a_1, a_2 \rangle^E_{\mathcal{U}_{k}}$, we may now apply the cluster isotopy theorem again to deform these clusters to elementary clusters at the cost of introducing new clusters with core curves with homology classes $d[C_k]$ for $d>2$. This deformation process also fixes the sign ambiguity in \eqref{eqn:cluster-isotopy}, as both signs are determined by the orientation sign of the underlying embedded curve (cf. \cite[Equation (7.3)]{IP-GV}). After finitely many iterations, the homology classes of the core curves in the error terms have energy $\omega(N[C_\ell]) > E$ for large $N \gg 0$, implying that the corresponding $E$-truncations of the structure constants in $\Lambda_{\le E}$ vanish. Therefore
        \begin{equation}
        c_i \langle Q\Sigma_b ; a_1, a_2 \rangle^E =     c_i \langle u^p \psi_b ; a_1, a_2 \rangle^E \in \Lambda_{\le E}
    \end{equation}
    for a fixed $E$, as desired.
\end{proof}

\begin{rem}\label[rem]{rem:purely-quantum}
    Note that the part of the proof that uses the cluster formalism shows that the \emph{non-classical terms} of the two operations agrees. A priori, the classical term (the coefficient of $q^0$) may differ if $b \in H^2(X;\mathbb{F}_p)$ is a torsion class which does not lift to $H^2(X;\mathbb{Z})$, since
    \begin{equation}
        \mathrm{St}(b) = b^p - u^{p-1} b + u^{p-2}\theta (\beta b)
    \end{equation}
    has the term from the Bockstein $\beta b \in H^3(X;\mathbb{F}_p)$ which does not contribute to the (classical part of) the $p$-curvature. If $H^3(X;\mathbb{Z})$ has no torsion, this condition will be satisfied automatically.

\end{rem}

\subsection{Proof of Katz's formula}\label{ssec:katz-formula-proof}

As a corollary of the cluster decomposition we proved for the $p$-curvature, we may now also provide a proof of the Katz' formula.

\begin{proof}[Proof of \cref{thm:Cartierconjugation-body}]

Recall from \cref{lem:Cartier-computation} that the matrix of inverse Cartier operator $\mathcal{C}^{-1}$ in the preferred homogeneous basis is computed, and the quantum product $\mathbb{M}_{e_k}$ is also computed in \cref{lem:qproduct-computation}. Structure constants of both operators are determined in terms of Yukawa couplings, and therefore it remains to compute the $p$-curvature endomorphism in the preferred basis explicitly in terms of the Yukawa couplings.

We work in the preferred homogeneous basis given by $1 \in H^0(X)$, $e_i \in H^2(X;\mathbb{Z})$, $L_j \in \bar{H}^4(X;\mathbb{Z})$, and $[\mathrm{pt}] \in H^6(X;\mathbb{Z})$ and compute the $p$-curvature endomorphism $ \psi_{e_k}$ in the direction of $e_k \in H^2(X;\mathbb{Z})$.

Under the Poincar\'e pairing, the $p$-curvature operator is given in terms of the structure constants
\begin{equation}
  \left(u^p  \psi_{e_k} (a_1) , a_2 \right) =:   \left(u^p  \psi_{e_k} |_{q^\beta = 0} (a_1) , a_2 \right)  + \sum_{i \ge 0} c_i \langle u^p  \psi_{e_k}; a_1, a_2 \rangle (u,\theta)^i \in \Lambda_{\mathbb{F}_p}[u]\langle \theta \rangle
\end{equation}
where $(\cdot, \cdot)$ is the Poincar\'e pairing; by the cluster decomposition theorem (\cref{prop:pcurv-cluster-decomposition}), we know that the ($E$-truncations of) the structure constants admit a cluster decomposition,
\begin{equation}
    c_i \langle u^p  \psi_{e_k}; a_1, a_2 \rangle^E  = \sum_{\ell} c_i \langle u^p \psi_{e_k} ; a_1, a_2 \rangle ^E_{\mathcal{U}_\ell}.
\end{equation}
By the cluster isotopy theorem (\cref{prop:isotopy}), we may assume that each of the cluster $\mathcal{U}_\ell$ is elementary. If $\mathcal{U}_\ell$ is elementary, the corresponding contribution $c_i \langle u^p \psi_{e_k} ; a_1, a_2 \rangle ^E_{U_\ell}$ is computed by taking the restriction of cohomology classes $H^*(X) \to H^*(Y)$ for $Y \cong N_{C_\ell}$ the neighborhood of the core curve $C_\ell$ (cf. the proof of \cref{prop:elementary-cluster-qst-is-pcurv}). Following the notation from the previous proof, we denote the image of a cohomology class $a \in H^*(X;\mathbb{F}_p)$ under this pullback map by $a' \in H^*(Y;\mathbb{F}_p)$. It follows that
\begin{equation}
    c_i \langle  u^p \psi_{e_k}; a_1, a_2 \rangle^E_{\mathcal{U}_\ell}  = c_i \langle u^p  \psi_{e_k'} ; a_1', a_2' \rangle^E
\end{equation}
where the right hand side is the corresponding structure constant computed in $Y$.  The latter is completely determined by the computation in \cref{eqn:localP1-pcurv-formula}. We now explain how this computation yields the full determination of $(\psi_{e_k}(a_1), a_2)$.

To illustrate the method of computation, we explain the case $a_1 = e_{j_1}, a_2 = e_{j_2} \in H^2(X;\mathbb{Z})$. Since $H^2(Y)$ is rank $1$ generated by $e$, there are coefficients $m_j \in \mathbb{F}_p$ such that $e_j' = m_j e$. Then
\begin{equation}
    c_i \langle u^p \psi_{e_k'}; e_{j_1}', e_{j_2}' \rangle^E = m_{j_1} m_{j_2} m_k^p \cdot  c_i \langle u^p \psi_e; e, e \rangle^E = \begin{cases} - m_{j_1} m_{j_2} m_k \sum_{d=1}^{\mathrm{const}(E)} q^{pd} & i = p-1 \\ 0 & \mbox{otherwise} \end{cases}
\end{equation}
from \cref{eqn:localP1-pcurv-formula}. Here we used $m_k^p = m_k \in \mathbb{F}_p$. On the other hand, the Yukawa coupling $Y_{{j_1}{j_2}k}(q)  = ( \mathbb{M}_{e_k}( e_{j_1}), e_{j_2} )$ can also be computed via cluster decomposition as they are structure constants of quantum product operators:
\begin{equation}
    ( \mathbb{M}_{e_k}( e_{j_1}), e_{j_2} )^E = \sum_\ell  ( \mathbb{M}_{e_k}( e_{j_1}), e_{j_2} )_{\mathcal{U}_\ell}^E 
\end{equation}
where, under $Y \cong N_{C_\ell}$ we have
\begin{equation}
    ( \mathbb{M}_{e_k}( e_{j_1}), e_{j_2} )_{\mathcal{U}_\ell}^E  = m_{j_1}m_{j_2}m_k ( \mathbb{M}_{e}( e), e )^E = m_{j_1}m_{j_2}m_k \sum_{d=1}^{\mathrm{const}(E)} q^d.
\end{equation}
By passing to the limit $E \to \infty$, it follows that
\begin{equation}
     c_i \langle u^p  \psi_{e_k}; e_{j_1}, e_{j_2} \rangle = \begin{cases} -(Y_{j_1 j_2 k} (q^p) - Y_{j_1j_2 k }(0)) & i = p-1 \\ 0 &\mbox{otherwise} \end{cases}.
\end{equation}

We may similarly compute the other entries  $\left(u^p  \psi_{e_k} (a_1) , a_2 \right)$. The final result of the computation is as follows. Recall that
\begin{equation}
    \eta_\zeta(q) = \zeta + \frac{1}{p^3} G(q^p) - G(q)  = \zeta + \sum_{A \neq 0} n_A \left( \frac{1}{p^3} \mathrm{Li}_3(q^{p\beta}) - \mathrm{Li}_3(q^\beta) \right);
\end{equation}
our structure constants are given in terms of logarithmic derivatives of $\eta_\zeta(q)$: 

\begin{align}\label{eqn:p-curv-formula}
\psi_{e_k} (1) &=  - u^{-1} e_k -  u^{-2} \sum_{j, k} \delta_j\delta_k(\eta_{\zeta}(q)) L_j +  2 u^{-3} \delta_k  \eta_{\zeta}(q) [\mathrm{pt}], \\
\psi_{e_k} (u^{-1}e_i) &=  - u^{-2}\sum_{i, j, k} Y_{ijk}(q^p) L_j   + u^{-3} \delta_i \delta_k (\eta_{\zeta}(q)) [\mathrm{pt}],\nonumber \\
\psi_{e_k} (u^{-2}L_i) &= -u^{-3} [\mathrm{pt}],  \nonumber \\
\psi_{e_k} (u^{-3} [\mathrm{pt}])&= 0 \nonumber  .
\end{align}

Here, we used the assumption $p > 3$ for the computation of the classical part of the operation,
\begin{equation}
    u^p \psi_{e_k}|_{q^\beta = 0} = (b^p - u^{p-1}b) \smile = -u^{p-1} \cdot b \smile.
\end{equation}

The desired result follows by explicit matrix multiplication.
\end{proof}

\begin{rem}
    An alternative way to prove \cref{thm:Cartierconjugation-body} would be to show that the Frobenius intertwiner, and hence the Cartier operator, satisfy the cluster decomposition. Then one can obtain a proof of the equivalence of the two operations by verifying the equivalence for the local $\mathbb{P}^1$, and bootstrapping the general case from it. This alternative approach is essentially equivalent to the argument given above, as our construction of the $p$-adically integral Frobenius intertwiner depends on Gopakumar--Vafa integrality (and hence the cluster formalism, implicitly).
\end{rem}


\appendix
\section{$P$-adic integrality of $\zeta_p(3)$}
In this appendix, we will prove that $\zeta_p(3) \in \mathbb{Z}_p$ for $p \ge 5$. In order to do so, we will use the following classical Clausen–von
Staudt congruence \cite[Theorem 9.5.14, Corollary 9.5.15]{cohen-vol2} concerning even Bernouli numbers $B_k$:   
\begin{thm} \label{thm:ClausenvonStaudt}
For any even $k \in \mathbb{Z}_{>0}$ we have
\[
B_k \equiv -\sum_{(p-1)|k} \frac{1}{p} \pmod{1},
\]
where it is understood that $p$ is a positive prime number. In particular, the denominator of $B_k$ is the product of the primes $p$ such that $(p-1) \mid k$, where each such prime occurs to exactly the first power.
\end{thm}
\begin{lemma}\label{lem:pintegrality}
Let $p$ be an odd prime.
\begin{enumerate}
    \item[(a)] If $p \ge 5$, then $\zeta_p(3) \in \mathbb{Z}_p$.
    \item[(b)] If $p = 3$, then $v_3(\zeta_3(3)) = -1$ (in particular, $\zeta_3(3) \notin \mathbb{Z}_3$).
\end{enumerate}
\end{lemma}
\begin{proof}
To prove (a), suppose $p \geq 5$. We recall (see e.g. \cite[pg. 73]{candelas}) that: \begin{align} \zeta_p(3)= \operatorname{lim}_{m\to \infty} B_{(p-1)p^m-2}/2, \end{align} where the $B_{(p-1)p^m-2}$ are understood as $p$-adic numbers. By Theorem \ref{thm:ClausenvonStaudt}, $B_{(p-1)p^m-2} \in \mathbb{Z}_p$ for all $m$ because $(p-1)\nmid ((p-1)p^m-2)$. By the completeness of $\mathbb{Z}_p$, we conclude $\zeta_p(3) \in \mathbb{Z}_p.$ \vskip 5 pt
For (b), set $p=3$. Then because $(p-1)|(p-1)p^m-2$ by Theorem \ref{thm:ClausenvonStaudt}, $B_{(p-1)p^m-2}$ has a simple factor of $p=3$ in the denominator. It follows that $\operatorname{val}_p(B_{(p-1)p^m-2})=-1$ and the same for $\zeta_p(3)$.
\end{proof}

\bibliographystyle{amsalpha}
\bibliography{ref}

\providecommand{\bysame}{\leavevmode\hbox to3em{\hrulefill}\thinspace}
\providecommand{\MR}{\relax\ifhmode\unskip\space\fi MR }
\providecommand{\MRhref}[2]{%
  \href{http://www.ams.org/mathscinet-getitem?mr=#1}{#2}
}
\providecommand{\href}[2]{#2}
\begin{thebibliography}{CdlOvS21}

\bibitem[BL25a]{bai-lee}
Shaoyun Bai and Jae~Hee Lee, \emph{3{D} mirror symmetry in positive characteristic}, arXiv preprint arXiv:2503.23590 (2025).

\bibitem[BL25b]{bai2025quantum}
\bysame, \emph{Quantum {A}dams operations in quasimap {$K$}-theory}, arXiv preprint arXiv:2510.09335 (2025).

\bibitem[BPS25]{bai2025p}
Shaoyun Bai, Daniel Pomerleano, and Paul Seidel, \emph{P-adic {G}amma classes and overconvergent {F}robenius structures for quantum connections}, arXiv preprint arXiv:2509.26295 (2025).

\bibitem[BPX24]{bai-pomerleano-xu}
Shaoyun Bai, Daniel Pomerleano, and Guangbo Xu, \emph{Cohomological splitting over rationally connected bases}, arXiv preprint arXiv:2406.00931 (2024).

\bibitem[BX22]{Bai-Xu-FOP1}
Shaoyun Bai and Guangbo Xu, \emph{A new transversality condition on orbifolds and integer-valued {G}romov--{W}itten type invariants}, Preprint arXiv:2201.02688, 2022.

\bibitem[BX26]{Bai_Xu_2026}
\bysame, \emph{Integral {H}amiltonian {F}loer theory: foundations}, 2026.

\bibitem[CdlOvS21]{candelas}
P.~Candelas, X.~de~la Ossa, and D.~van Straten, \emph{Local zeta functions from calabi–yau diﬀerential equations}, 2021.

\bibitem[Che24]{chen2024exponential}
Zihong Chen, \emph{On the exponential type conjecture}, arXiv preprint arXiv:2409.03922 (2024).

\bibitem[Che26]{chen2026getzler}
\bysame, \emph{The {G}etzler-{G}auss-{M}anin connection and {K}ontsevich-{S}oibelman operations on the periodic cyclic homology}, arXiv preprint arXiv:2601.16437 (2026).

\bibitem[CK99]{cox-katz}
David~A. Cox and Sheldon Katz, \emph{Mirror symmetry and algebraic geometry}, Mathematical Surveys and Monographs, vol.~68, American Mathematical Society, Providence, RI, 1999.

\bibitem[Coh07]{cohen-vol2}
Henri Cohen, \emph{Number theory. {V}ol. {II}. {A}nalytic and modern tools}, Graduate Texts in Mathematics, vol. 240, Springer, New York, 2007.

\bibitem[DI87]{deligneillusie}
Pierre Deligne and Luc Illusie, \emph{Rel\`evements modulo {$p^2$} et d\'{e}composition du complexe de de {R}ham}, Invent. Math. \textbf{89} (1987), no.~2, 247--270. \MR{894379}

\bibitem[DIW21]{doan2021gopakumar}
Aleksander Doan, Eleny-Nicoleta Ionel, and Thomas Walpuski, \emph{The {G}opakumar-{V}afa finiteness conjecture}, arXiv preprint arXiv:2103.08221 (2021).

\bibitem[DR25]{raksitsanath}
Sanath Devalapurkar and Arpon Raksit, \emph{{$THH(\mathbb{Z})$} and the image of {$J$}}, arXiv preprint arxiv:2505.02218 (2025).

\bibitem[Dub98]{Dub98}
Boris Dubrovin, \emph{Geometry and analytic theory of {F}robenius manifolds}, Proceedings of the {I}nternational {C}ongress of {M}athematicians, {V}ol. {II} ({B}erlin, 1998), 1998, pp.~315--326. \MR{1648082}

\bibitem[Fal89]{Faltings1989}
Gerd Faltings, \emph{Crystalline cohomology and p-adic galois-representations}, Algebraic analysis,geometry, and number theory,Johns Hopkins Univ. Press (1989).

\bibitem[FL83]{Fontaine_Laffaille}
Jean-Marc Fontaine and Guy Laffaille, \emph{Construction de repr´esentations p-adiques}, Ann. Sci. Ecole Norm. Sup. \textbf{15(4)} (1983).

\bibitem[FM87]{FontaineMessing}
Jean-Marc Fontaine and William Messing, \emph{{$p$}-adic periods and {$p$}-adic \'{e}tale cohomology}, Current trends in arithmetical algebraic geometry ({A}rcata, {C}alif., 1985), Contemp. Math., vol.~67, Amer. Math. Soc., Providence, RI, 1987, pp.~179--207. \MR{902593}

\bibitem[Fuk97]{Fuk97}
Kenji Fukaya, \emph{Morse homotopy and its quantization}, Geometric topology ({A}thens, {GA}, 1993), AMS/IP Stud. Adv. Math., vol.~2, Amer. Math. Soc., Providence, RI, 1997, pp.~409--440. \MR{1470740}

\bibitem[Giv95]{Giv95}
A.~B. Givental, \emph{Homological geometry. {I}. {P}rojective hypersurfaces}, Selecta Math. (N.S.) \textbf{1} (1995), no.~2, 325--345. \MR{1354600}

\bibitem[GS25]{ganatra2025cyclic}
Sheel Ganatra and Nick Sheridan, \emph{The cyclic open-closed map and variations of {H}odge structures}, arXiv preprint arXiv:2511.04498 (2025).

\bibitem[IP18]{IP-GV}
Eleny-Nicoleta Ionel and Thomas~H. Parker, \emph{The {G}opakumar-{V}afa formula for symplectic manifolds}, Ann. of Math. (2) \textbf{187} (2018), no.~1, 1--64.

\bibitem[Kat70a]{Kat70}
Nicholas~M. Katz, \emph{Nilpotent connections and the monodromy theorem: {A}pplications of a result of {T}urrittin}, Inst. Hautes \'{E}tudes Sci. Publ. Math. (1970), no.~39, 175--232. \MR{291177}

\bibitem[Kat70b]{katz-nilpotent}
\bysame, \emph{Nilpotent connections and the monodromy theorem: {A}pplications of a result of {T}urrittin}, Inst. Hautes \'Etudes Sci. Publ. Math. (1970), no.~39, 175--232.

\bibitem[Kat72]{Kat72}
\bysame, \emph{Algebraic solutions of differential equations ({$p$}-curvature and the {H}odge filtration)}, Invent. Math. \textbf{18} (1972), 1--118. \MR{337959}

\bibitem[Kat73]{Katz73}
Nicholas Katz, \emph{Travaux de {D}work}, Exp. No. 409, pp. 167--200. \MR{498577}

\bibitem[Kat87]{Kato}
Kazuya Kato, \emph{On {$p$}-adic vanishing cycles (application of ideas of {F}ontaine-{M}essing)}, 207--251. \MR{946241}

\bibitem[Ked06]{kedlaya-fourier}
Kiran~S. Kedlaya, \emph{Fourier transforms and {$p$}-adic `{W}eil {II}'}, Compos. Math. \textbf{142} (2006), no.~6, 1426--1450.

\bibitem[Ked22]{Kedlaya}
\bysame, \emph{Frobenius structures on hypergeometric equations}, Arithmetic, geometry, cryptography, and coding theory 2021, Contemp. Math., vol. 779, Amer. Math. Soc., [Providence], RI, [2022] \copyright 2022, pp.~133--158. \MR{4445774}

\bibitem[KSV06]{KSV06}
Maxim Kontsevich, Albert Schwarz, and Vadim Vologodsky, \emph{Integrality of instanton numbers and p-adic b-model}, Physics Letters B \textbf{637} (2006), 97–101.

\bibitem[Lee23]{Lee23b}
Jae~Hee Lee, \emph{Quantum steenrod operations of symplectic resolutions}, 2023, arxiv:2312.02100, to appear in Geometry \& Topology.

\bibitem[Lee25]{Lee23a}
\bysame, \emph{Flat endomorphisms for mod {$p$} equivariant quantum connections from quantum {S}teenrod operations}, Selecta Math. (N.S.) \textbf{31} (2025), no.~1, Paper No. 15. \MR{4848887}

\bibitem[LYZ25]{logFLpaper}
Zhenmou Lui, Jinbang Yang, and Kang Zuo, \emph{Logarthmic crystalline representations}, arXiv preprint arxiv:2504.14246v2 (2025).

\bibitem[MS12]{MS12}
Dusa McDuff and Dietmar Salamon, \emph{{$J$}-holomorphic curves and symplectic topology}, second ed., American Mathematical Society Colloquium Publications, vol.~52, American Mathematical Society, Providence, RI, 2012. \MR{2954391}

\bibitem[NS18]{nikolaus-scholze}
Thomas Nikolaus and Peter Scholze, \emph{On topological cyclic homology}, Acta Math. \textbf{221} (2018), no.~2, 203--409.

\bibitem[Par25]{pardon-MNOP}
John Pardon, \emph{Universally counting curves in calabi--yau threefolds}, 2025.

\bibitem[PS23a]{rel-Fuk}
Timothy Perutz and Nick Sheridan, \emph{Constructing the relative {F}ukaya category}, J. Symplectic Geom. \textbf{21} (2023), no.~5, 997--1076.

\bibitem[PS23b]{PS23}
Dan Pomerleano and Paul Seidel, \emph{The quantum connection, fourier-laplace transform, and families of a-infinity-categories}, 2023, arxiv:2308.13567.

\bibitem[PV19]{petrov2019periodic}
Alexander Petrov and Vadim Vologodsky, \emph{On the periodic topological cyclic homology of {DG} categories in characteristic {$p$}}, arXiv preprint arXiv:1912.03246 (2019).

\bibitem[PVV18]{PVV}
Alexander Petrov, Dmitry Vaintrob, and Vadim Vologodsky, \emph{The {G}auss-{M}anin connection on the periodic cyclic homology}, Selecta Math. (N.S.) \textbf{24} (2018), no.~1, 531--561. \MR{3769739}

\bibitem[Rez24]{rezchikov2024cyclotomic}
Semon Rezchikov, \emph{Cyclotomic structures in symplectic topology}, arXiv preprint arXiv:2405.18370 (2024).

\bibitem[Sei18]{seidel-connections}
Paul Seidel, \emph{Connections on equivariant {H}amiltonian {F}loer cohomology}, Comment. Math. Helv. \textbf{93} (2018), no.~3, 587--644. \MR{3854903}

\bibitem[Sha09]{Schapiro1}
I.~Shapiro, \emph{Frobenius map for quintic threefolds}, Int. Math. Res. Not. IMRN (2009), no.~13, 2519--2545. \MR{2520788}

\bibitem[Sha12]{Schapiro2}
Ilya Shapiro, \emph{Frobenius map and the {$p$}-adic gamma function}, J. Number Theory \textbf{132} (2012), no.~8, 1770--1779. \MR{2922344}

\bibitem[Sim97]{simpson-hodge-nonabelian}
Carlos Simpson, \emph{The {H}odge filtration on nonabelian cohomology}, Algebraic geometry---{S}anta {C}ruz 1995, Proc. Sympos. Pure Math., vol. 62, Part 2, Amer. Math. Soc., Providence, RI, 1997, pp.~217--281. \MR{1492538}

\bibitem[Smi24]{smirnov24}
Andrey Smirnov, \emph{Frobenius intertwiners for q-difference equations}, \url{https://arxiv.org/pdf/2406.00206}, 2024.

\bibitem[SW22]{seidel-wilkins}
Paul Seidel and Nicholas Wilkins, \emph{Covariant constancy of quantum {S}teenrod operations}, J. Fixed Point Theory Appl. \textbf{24} (2022), no.~2, Paper No. 52, 38.

\bibitem[Tab22]{tabuadaweil}
Gon\c{c}alo Tabuada, \emph{Noncommutative {W}eil conjecture}, Adv. Math. \textbf{404} (2022), no.~part A, Paper No. 108385, 38. \MR{4406259}

\bibitem[Voi96]{Voi96}
Claire Voisin, \emph{A mathematical proof of a formula of {A}spinwall and {M}orrison}, Compositio Math. \textbf{104} (1996), no.~2, 135--151. \MR{1421397}

\bibitem[Wil]{Wil-sur}
Nicholas Wilkins, \emph{A survey of equivariant quantum operations}, https://sites.google.com/site/nwilkinsmaths/a-survey-of-equivariant-quantum-operations?authuser=0.

\bibitem[Wil20]{Wil20}
\bysame, \emph{A construction of the quantum {S}teenrod squares and their algebraic relations}, Geom. Topol. \textbf{24} (2020), no.~2, 885--970. \MR{4153653}

\bibitem[Zin08]{Zin08}
Aleksey Zinger, \emph{Pseudocycles and integral homology}, Trans. Amer. Math. Soc. \textbf{360} (2008), no.~5, 2741--2765. \MR{2373332}

\end{thebibliography}

\end{document}